\title{Singularities with respect to Mather-Jacobian discrepancies}
\author{Lawrence Ein}
\author{Shihoko Ishii} 
\address{Department of Mathematics, University of Illinois at Chicago, Chicago, IL 60607-7045, USA}
\email{ein@math.uic.edu}
\address{Graduate school of Mathematical Science, University of Tokyo, Meguro, Tokyo, Japan}
\email{shihoko@ms.u-tokyo.ac.jp}
\thanks{2010\,\emph{Mathematics Subject Classification}.
 Primary 14F18; Secondary 14B05.
\newline
The first author: partially supported by NSF grant DMS-1001336
The second author:
 partially supported by Grant-in-Aid (B) 22340004 }
 \keywords{singularities, discrepancy, Multiplier ideals}
\def\cO{\mathcal{O}}
\newcommand{\bC}{{\Bbb C}}
\newcommand{\bP}{{\Bbb P}}
\newcommand{\bZ}{{\Bbb Z}}
\newcommand{\bQ}{{\Bbb Q}}
\newcommand{\bG}{{\Bbb G}}
\newcommand{\bR}{{\Bbb R}}
\newcommand{\bN}{{\Bbb N}}
\newcommand{\bA}{{\Bbb A}}
\newcommand{\bx}{{\bf x}}
\renewcommand{\int}{{\operatorname{in}}}
\newcommand{\emb}{{\operatorname{emb}}}
\newcommand{\codim}{\operatorname{codim}}
\newcommand{\spec}{\operatorname{Spec}}
\newcommand{\Hom}{\operatorname{Hom}}
\newcommand{\sing}{\operatorname{Sing}}
\newcommand{\ord}{\operatorname{ord}}
\newcommand{\val}{\operatorname{val}}
\newcommand{\mult}{\operatorname{mult}}
\newcommand{\olx}{{\overline{X}}}
\newcommand{\ola}{{\overline{A}}}
\newcommand{\oa}{{\mathcal O}_A}
\newcommand{\ooa}{{{\mathcal O}_\ola}}
\newcommand{\oaa}{{{\mathcal O}_{A'}}}
\newcommand{\lm}{{{\mathcal{L}}^m}}
\renewcommand{\ln}{{{\mathcal{L}}^n}}
\newcommand{\li}{{{\mathcal{L}}^\infty}}
\let \cedilla =\c
\renewcommand{\o}[0]{{\mathcal O}} 
\newcommand{\ox}{{{\mathcal O}_X}}
\newcommand{\oy}{{{\mathcal O}_Y}}
\newcommand{\oyy}{{{\mathcal O}_{Y'}}}
\newcommand{\hx}{{\widehat X}}
\newcommand{\hk}{{\widehat K}}
\renewcommand{\a}{{\frak{a}}}
\renewcommand{\b}{{\frak{b}}}
\newcommand{\mld}{\operatorname{mld}}
\renewcommand{\j}{{\mathcal J}}
\newcommand{\jx}{{{\mathcal J}_X}}
\newcommand{\hmld}{{\operatorname{mld}_{\operatorname{MJ}}}}
\newcommand{\hj}{{\mathcal J}_{\operatorname{MJ}}}
\newcommand{\hKY}{\widehat K_{Y/X}}
\newcommand{\ha}{a_{\operatorname{MJ}}}
\newcommand{\im}{{\operatorname{Im}}}
\newcommand{\cont}{\operatorname{Cont}}
\def\to {\longrightarrow}
\newtheorem{thm}{Theorem}[section]
\newtheorem{lem}[thm]{Lemma}
\newtheorem{cor}[thm]{Corollary}
\newtheorem{prop}[thm]{Proposition}
\theoremstyle{definition}
\newtheorem{defn}[thm]{Definition}
\newtheorem{say}[thm]{}
\newtheorem{exmp}[thm]{Example}
\newtheorem{rem}[thm]{Remark}
\theoremstyle{remark}
\begin{document}
\maketitle
\begin{abstract}
As is well known, 
the ``usual discrepancy" is defined for a  normal  
$\bQ$-Gorenstein variety. By using this discrepancy we can define a canonical singularity and a log canonical singularity.
In the same way, by using a new notion, Mather-Jacobian discrepancy introduced in recent papers we can define a ``canonical singularity" and a ``log canonical singularity" for 
not necessarily normal or $\bQ$-Gorenstein varieties.
In this paper, we show basic properties of these singularities, behavior of these singularities under  deformations and determine all these singularities of dimension up to 2. 

\end{abstract}

\section{Introduction}
In birational geometry,  canonical, log canonical, terminal and log terminal singularities
play important roles.
These singularities are all normal $\bQ$-Gorenstein singularities and
each step of 
the minimal model program is performed inside  the category of normal $\bQ$-Gorenstein
singularities.
But in turn, from a purely singularity theoretic view point, the normal $\bQ$-Gorenstein
property seems, in some sense, to be an unnecessary restriction  for a singularity to be considered as a 
good singularity, because there are many ``good" singularities without normal 
$\bQ$-Gorenstein property (for example, the cone over the Segre embedding
$\bP^1\times\bP^2\hookrightarrow \bP^5$).

In this paper, we take off the restriction normal $\bQ$-Gorenstein, give definitions of 
``good" singularities which have some compatibilities with the usual canonical, log canonical, terminal and log terminal singularities
and  study  our ``good"
singularities.
To contrast, remember the definition of the usual canonical, log canonical, terminal and log terminal singularities. 
We say that a pair $(X, \a^t)$ consisting of a normal $\bQ$-Gorenstein variety $X$, an ideal $\a
\subset \ox$ and
$t\in \bR_{\geq 0}$  has canonical (resp. log canonical, terminal, log terminal ) 
singularities if, for a log resolution $\varphi:Y\to X$ of $(X, \a)$, the log discrepancy 
$a(E; X, \a^t)$ satisfies the inequality 
$$a(E; X, \a^t):=\ord_E(K_{Y/X})-t \val_E(\a)+1\geq 1\  (\mbox{resp.} \ \geq 0,\ 
> 1, \ > 0)$$
 for every exceptional prime divisor $E$.
We say that $(X, \a^t)$ has klt singularities if the above inequality holds for every
prime divisor on $Y$.
Here we note that the discrepancy divisor $K_{Y/X}=K_Y-\frac{1}{r}\varphi^*(rK_X)$ is well defined
if there is an integer $r$ such that $rK_X$ is a Cartier divisor, which means that 
$X$ is a $\bQ$-Gorenstein variety.

Now, consider a pair $(X, \a^t)$ under a more general setting.
Let $X$ be a  connected reduced equidimensional affine scheme of finite type over
an algebraically closed field $k$ of characteristic zero.
For a log resolution  $\varphi:Y\to X$ of $(X, \a)$ which factors through the Nash 
blow-up, we can define the Mather discrepancy divisor $\hk_{Y/X}$ (Definition \ref{mather}).
For the Jacobian ideal $\jx\subset \ox$ we define the Jacobian discrepancy divisor 
$J_{Y/X}$ by $\oy(-J_{Y/X})=\jx\oy$. 
The combination $\hk_{Y/X}-J_{Y/X}$ is called the Mather-Jacobian discrepancy
divisor and plays a central role in this paper.
The basic idea is just to replace the usual discrepancy $K_{Y/X}$ by the Mather-Jacobian discrepancy, {\it i.e.,} we define the Mather-Jacobian log discrepancy
$$\ha(E; X, \a^t):=\ord_E(\hk_{Y/X}-J_{Y/X})-t \val_E(\a)+1$$
and by $\ha(E; X, \a^t)\geq 1 $ (resp. $\geq 0$, $> 1$, $>0$) for every exceptional
prime divisor $E$, we define that $(X, \a^t)$ is MJ-canonical (resp. MJ-log canonical,
MJ-terminal, MJ-log terminal. 
We say that $(X, \a^t)$ is MJ-klt if $\ha(E; X, \a^t)> 0 $
for every prime divisor on $Y$. 
According to the basic idea of the replacement by Mather-Jacobian discrepancy, the 
invariants the minimal log discrepancy $\mld$ and the multiplier ideal $\j(X, \a^t)$ defined by using the usual
discrepancy divisor, can be modified to the Mather-Jacobian versions
$\hmld$ and $\hj(X, \a^t)$.

In some points, the Mather-Jacobian discrepancy behaves better  than the usual 
discrepancy divisor.
One of the most distinguished properties of the Mather-Jacobian discrepancy is  the inversion of adjunction:
\begin{prop}[Inversion of Adjunction,  \cite{dd},  \cite{Ishii}]
Let $X$ be a connected reduced equidimenisonal scheme of finite type over $k$.
Let  $A$ be a non singular variety containing $X$ as a closed
subscheme of codimension $c$ and $W$ a strictly  proper closed subset of $X$.
 Let $\widetilde\a\subset \o_A$ be an ideal such that its image ${\a}:=\widetilde\a\o_X\subset \o_X$ is non-zero on each irreducible component.
 Denote the defining ideal of $X$ in $A$ by $I_X$.
Then,
$$\hmld(W; X,{\a^t})=\hmld(W;A,\widetilde\a^t I_X^c)=\mld(W;A,\widetilde\a^t I_X^c).$$
\end{prop}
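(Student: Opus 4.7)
The plan is to split the statement into two equalities. The right-hand equality $\hmld(W;A,\widetilde\a^t I_X^c)=\mld(W;A,\widetilde\a^t I_X^c)$ is essentially a tautology from the smoothness of $A$: on a smooth variety the Jacobian ideal equals the full structure sheaf, so $J_{Y/A}=0$ on any log resolution $Y\to A$, and the Nash blow-up of $A$ is an isomorphism, so $\hk_{Y/A}=K_{Y/A}$. Hence the Mather-Jacobian log discrepancy of every prime divisor over $A$ agrees with its usual log discrepancy, and the two minimal log discrepancies coincide.

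For the left-hand equality $\hmld(W;X,\a^t)=\hmld(W;A,\widetilde\a^t I_X^c)$ I would fix one model that serves both sides. Choose an embedded log resolution $\varphi:Y\to A$ of the triple $(A,X,\widetilde\a)$ so that $Y$ is smooth, the strict transform $X'\subset Y$ of $X$ is smooth, $\varphi|_{X'}:X'\to X$ factors through the Nash blow-up of $X$, and all the relevant divisors have simple normal crossings. Such a $Y$ exists by Hironaka resolution. The right side is then computed as $\inf_{E'}a(E';A,\widetilde\a^t I_X^c)$ over prime divisors $E'\subset Y$ with center in $W$, and the left side as $\inf_E\ha(E;X,\a^t)$ over prime divisors $E\subset X'$ with center in $W$.

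The substance of the proof is matching these two infima. Every prime divisor $E\subset X'$ determines, by restricting the valuation $\ord_E$ to $\o_A$, a divisorial valuation on $k(A)$ whose center lies in $X$; after possibly blowing up $Y$ further this valuation is realized by a prime divisor $E'\subset Y$, and the correspondence $E\leftrightarrow E'$ is bijective once one disregards $E'$ with $\val_{E'}(I_X)=0$ (whose centers miss $X$, hence $W$). The crucial identity is
$$
\ord_E\bigl(\hk_{X'/X}-J_{X'/X}\bigr)+1\;=\;\ord_{E'}(K_{Y/A})+1-c\cdot\val_{E'}(I_X),
$$
together with the compatibility $\val_E(\a)=\val_{E'}(\widetilde\a)$; granted these, the two infima agree term by term and the equality follows.

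The main obstacle is the displayed identity, which is the technical heart of the proposition and is proved in \cite{dd} and \cite{Ishii}. Its proof involves a local calculation: choose coordinates on $A$ in which $X$ is cut out by regular equations $f_1,\dots,f_c$, then read off $\hk_{X'/X}$ from the pullback of top-degree K\"ahler differentials on the Nash blow-up of $X$, and $\jx$ from the $c\times c$ minors of the Jacobian matrix of $(f_1,\dots,f_c)$. On $Y$, the combination $K_{Y/A}-c\cdot F$ with $\o_Y(-F)=I_X\o_Y$ records exactly this data: one copy of $I_X$ is absorbed by the strict-transform component of $F$, while the remaining $c-1$ copies combine with the Jacobian to reproduce $\hk_{X'/X}-J_{X'/X}$ upon restriction to $X'$.
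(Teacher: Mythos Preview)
The right-hand equality is fine and is exactly Remark~\ref{conflict}(i) (equivalently Remark~\ref{note-on-a}): on a smooth $A$ one has $\hk_{Y/A}=K_{Y/A}$ and $J_{Y/A}=0$, so the Mather--Jacobian and usual log discrepancies coincide.

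The left-hand equality, however, is where your argument breaks down, and the gap is structural rather than technical. You propose a bijection $E\leftrightarrow E'$ between prime divisors over $X$ and prime divisors over $A$ with center in $X$, obtained by ``restricting $\ord_E$ to $\o_A$''. But $\ord_E$ is a valuation on $k(X)$, and the composite $\o_A\to\o_X\to\bZ_{\geq 0}\cup\{\infty\}$ sends all of $I_X$ to $\infty$; what you get is only a semi-valuation on $A$, not a divisorial valuation, and there is no single prime divisor $E'$ over $A$ realizing it. Conversely, a prime divisor $E'$ over $A$ with center in $W$ has dimension $\dim A-1$, while divisors over $X$ have dimension $\dim X-1$; there is no natural map from one set to the other, and certainly no term-by-term matching of the two infima. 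Your ``key identity'' therefore cannot even be formulated as stated.

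There is a second problem in your last paragraph: you assume $X$ is cut out in $A$ by exactly $c$ equations $f_1,\ldots,f_c$. The hypothesis is only that $X$ has codimension $c$; it need not be a local complete intersection (indeed much of the paper is devoted to the non-l.c.i.\ case). So the Jacobian calculation you sketch is not available in general.

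The proofs in \cite{dd} and \cite{Ishii} that the paper cites do \emph{not} proceed by matching divisors. They go through arc spaces: one expresses $\mld(W;A,\widetilde\a^t I_X^c)$ as an infimum of codimensions of contact loci $\cont^{\geq m}(\widetilde\a)\cap\cont^{\geq n}(I_X)\cap\cont^{\geq 1}(I_W)$ in $\li(A)$, and separately expresses $\hmld(W;X,\a^t)$ via the geometry of $\li(X)\subset\li(A)$ (this is where the Mather discrepancy and Jacobian ideal enter, through the change-of-variables formula for arc spaces due to Denef--Loeser and its refinement for singular $X$). The two expressions are then compared directly at the level of cylinders; no divisor-to-divisor correspondence is needed. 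The paper itself summarizes this in Proposition~\ref{description} and the surrounding discussion. If you want to rebuild the proof, that is the route to follow.
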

\noindent
Many good properties follows from this formula.

In this paper we study basic properties of MJ-canonical, MJ-log canonical singularities
and determine these singularities of dimension up to 2.
Concretely we obtain the following.
The first one below is about the relation of singularities of MJ-version and singularities of the usual version.

 \begin{prop}[Proposition \ref{mj-and-usual}]
 Let $X$ be a normal $\bQ$-Gorenstein variety, $\a\subset \ox$ an ideal and $t$ a non negative real number.
 If $(X,\a^t)$ is MJ-canonical (resp. MJ-log canonical, MJ-terminal, MJ-log terminal, MJ-klt), then it is canonical (resp. log canonical, terminal, log terminal, klt) in the usual sense.
 \end{prop}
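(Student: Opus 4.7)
The plan is to reduce all five implications to a single divisorial inequality on a common log resolution: for $X$ normal $\bQ$-Gorenstein and $\varphi\colon Y \to X$ a log resolution of $(X,\a)$ factoring through the Nash blowup,
$$\hKY - J_{Y/X} \leq \KY \qquad \text{as divisors on } Y.$$
Granting this, $\ord_E(\hKY - J_{Y/X}) \leq \ord_E(\KY)$ yields $\ha(E;X,\a^t) \leq a(E;X,\a^t)$ for every prime divisor $E$ on $Y$, and each of the five MJ-singularity conditions (the bound taken over all exceptional divisors in the canonical/log canonical/terminal/log terminal cases, and over all prime divisors in the klt case) then forces the corresponding usual-singularity condition.

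To prove the key inequality, fix $r>0$ with $rK_X$ Cartier, and let $\Phi_r\colon (\wedge^n\Omega_X)^{\otimes r}\to \omega_X^{[r]}=\ox(rK_X)$ be the canonical biduality map into the reflexive hull, which restricts to the canonical identification of $r$-fold top forms on the smooth locus of $X$. Write the image of $\Phi_r$ as $\mathfrak{d}_r\cdot \omega_X^{[r]}$ for some ideal $\mathfrak{d}_r\subset\ox$. The direct composite $(\varphi^*\wedge^n\Omega_X)^{\otimes r}\to \omega_Y^{\otimes r}$, namely the $r$-th tensor power of $\varphi^*\wedge^n\Omega_X\to \omega_Y$, has image $\oy(rK_Y - r\hKY)$ by the defining property of $\hKY$. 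This map also factors through $\varphi^*\omega_X^{[r]} = \oy(rK_Y - r\KY)$---where the identification uses that $rK_X$ is Cartier---with factorized image $\mathfrak{d}_r\oy \cdot \oy(rK_Y - r\KY)$. The two routes describe the same map, since they coincide on the preimage of the smooth locus of $X$ and $\omega_Y^{\otimes r}$ is torsion-free; equating images gives
$$r\hKY = r\KY + D_{\mathfrak{d}_r}, \qquad \mathfrak{d}_r\oy = \oy(-D_{\mathfrak{d}_r}).$$

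It then suffices to show the ideal inclusion $\jx^r \subseteq \mathfrak{d}_r$, equivalently $D_{\mathfrak{d}_r}\leq r\,J_{Y/X}$. In the Gorenstein case this is the equality $\mathfrak{d}_1 = \jx$: the image of $\wedge^n\Omega_X\hookrightarrow \omega_X$ is precisely $\jx\cdot \omega_X$, direct from the residue description of $\omega_X$ for hypersurfaces and extended by local duality in general. The main obstacle I foresee is extending this inclusion to genuine $\bQ$-Gorenstein $X$, where the interaction between tensor powers of $\wedge^n\Omega_X$ and the passage to the reflexive hull $\omega_X^{[r]}$ at the non-Gorenstein singularities must be tracked carefully; a natural route is to pass to the index-one cover $\widetilde X \to X$, on which $\widetilde X$ is Gorenstein so the $r=1$ equality applies upstairs, and descend invariantly. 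Combining yields $\hKY - J_{Y/X} = \KY + \tfrac{1}{r}(D_{\mathfrak{d}_r} - r\,J_{Y/X}) \leq \KY$, as desired.
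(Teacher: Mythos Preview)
Your overall strategy and the first half of the argument coincide with the paper's proof: both reduce everything to the inequality $\hKY-J_{Y/X}\le \KY$, and both obtain the identity $r\hKY=r\KY+D_{\mathfrak d_r}$ from the image of $(\wedge^d\Omega_X)^{\otimes r}\to\omega_X^{[r]}$ (the paper writes $I_r$ for your $\mathfrak d_r$ and records this as $I_r\oy(r\hKY)=\oy(r\KY)$).

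The divergence is in the last step. You aim for the ideal inclusion $\jx^r\subseteq\mathfrak d_r$ and propose to get it by passing to the index-one cover and descending; as you note yourself, this is not carried out, and it is genuinely delicate to compare Jacobian ideals across the cyclic cover. Two remarks. First, you only need the weaker valuative statement $\jx^r\subseteq\overline{\mathfrak d_r}$, which is what is actually equivalent to $D_{\mathfrak d_r}\le rJ_{Y/X}$; the honest ideal inclusion is stronger than necessary. Second, the paper bypasses the index-one cover entirely: setting $J_r:=\jx^r:I_r$, one has $\overline{J_rI_r}=\overline{\jx^r}$ by \cite[Corollary~9.4]{e-Mus2}. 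Writing $I_r\oy=\oy(-Z_r)$ and $J_r\oy=\oy(-Z'_r)$, this gives $rJ_{Y/X}=Z_r+Z'_r$, hence
\[
r\hKY-rJ_{Y/X}=r\hKY-Z_r-Z'_r=r\KY-Z'_r\le r\KY,
\]
which is exactly the missing inequality. This colon-ideal/integral-closure argument is short and avoids the descent problem you flagged; I would recommend replacing your proposed index-one-cover step with it.
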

 
 We call MJ-canonical singularities, MJ-log canonical singularities and so on 
 by the generic name ``MJ-singularities".
As MJ-singularities are not necessarily normal, it is reasonable to compare these 
with existing non normal  singularities which is considered as ``good" singularities.
 The following gives the relation of MJ-log canonical singularities and semi log canonical singularities.
 
 \begin{prop}[Proposition \ref{slc}]
  Assume $X$ is $S_2$ and $\bQ$-Gorenstein.
  If $(X, \a^t)$ is MJ-log canonical, then it is semi log canonical.
\end{prop}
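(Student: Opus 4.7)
The plan is to translate the MJ-log canonical hypothesis, via the inversion-of-adjunction formula quoted above, into a log canonicity statement on an ambient smooth variety, and then to deduce the two nontrivial defining conditions for semi-log-canonicity: that $X$ has only nodes in codimension one, and that the normalized pair together with the conductor is log canonical.

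Embed $X$ as a closed subscheme of codimension $c$ in a smooth variety $A$, with defining ideal $I_X\subset\o_A$, and choose a lift $\widetilde\a\subset\o_A$ of $\a$. By the inversion-of-adjunction proposition,
\[
\mld(W;A,\widetilde\a^{\,t}I_X^c)=\hmld(W;X,\a^t)\geq 0
\]
for every closed $W$, so $(A,\widetilde\a^{\,t}I_X^c)$ is log canonical; dropping the $\widetilde\a^{\,t}$ factor (which only enlarges the pair), the pair $(A,I_X^c)$ is log canonical.

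\emph{Step 1 (Nodes in codimension one).} Let $\eta$ be the generic point of any codimension-one component of $\sing X$. Localizing and completing at $\eta$ reduces us to a reduced one-dimensional codimension-$c$ subscheme of a regular local ring, with $(A,I_X^c)$ still log canonical at $\eta$. A standard multiplicity estimate for log canonical pairs of this form forces $\mult_\eta X\leq 2$; combined with reducedness this says that $X$ is analytically a node at $\eta$. Together with the $S_2$ and $\bQ$-Gorenstein hypotheses, $X$ is demi-normal, so that, writing $\nu:\olx\to X$ for the normalization and $D\subset\olx$ for the conductor divisor, the adjunction formula $K_{\olx}+D=\nu^*K_X$ holds.

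\emph{Step 2 (Log canonicity of the normalized pair).} Take a common log resolution $\varphi:Y\to X$ factoring as $Y\xrightarrow{\overline\varphi}\olx\xrightarrow{\nu}X$ and dominating the Nash blow-up of $X$. For a prime divisor $E$ on $Y$, the usual log discrepancy equals $\ord_E(K_{Y/\olx}-\overline\varphi^*D)-t\val_E(\nu^{-1}(\a))+1$. Using the adjunction formula and the behavior of the Mather and Jacobian discrepancies under the finite birational map $\nu$, one establishes
\[
a(E;\olx,\,\nu^{-1}(\a)^t\cdot D)\geq \ha(E;X,\a^t)
\]
for every $E$. Hence the MJ-log canonicity of $(X,\a^t)$ implies log canonicity of $(\olx,\nu^{-1}(\a)^t\cdot D)$, which together with Step 1 is precisely the semi-log-canonicity of $(X,\a^t)$.

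\emph{Main obstacle.} The technical heart is the discrepancy comparison in Step 2: expressing $\hk_{Y/X}-J_{Y/X}$ in terms of $K_{Y/\olx}$ and $\overline\varphi^*D$ modulo effective corrections. This requires identifying the conductor divisor as precisely the correction term that distinguishes the MJ-discrepancy of the non-normal $X$ from the usual discrepancy of its demi-normal model $\olx$, and tracking the way the Jacobian ideal transforms under the finite normalization $\nu$.
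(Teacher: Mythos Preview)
Your overall two-step structure matches the paper's, but both steps have gaps that the paper fills differently.

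\textbf{Step 1.} The assertion ``$\mult_\eta X\le 2$ combined with reducedness gives a node'' is false: the cusp $y^2=x^3$ is reduced of multiplicity $2$. So the multiplicity bound alone is not enough. The paper does not go through a multiplicity estimate on $(A,I_X^c)$ at all. Instead it works directly on $X$ via jet schemes: from $\hmld(W;X,\ox)\ge 0$ one gets $\dim(\psi^X_{m0})^{-1}(W)\le d(m+1)$, hence at a general point $x$ of a codimension-one component $W$ of $\sing X$ one has $\hmld(x;X,\ox)\ge d-1$, and then invokes the classification from \cite{ir} of singularities with $\hmld = d-1$ (normal crossing double or pinch point, the latter excluded for codimension reasons). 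If you want to salvage your route, you would have to use the full strength of log canonicity of $(A_\eta,I_X^c)$ at $\eta$, not just a multiplicity consequence, which essentially brings you back to the $1$-dimensional classification in Proposition~\ref{1dim}.

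\textbf{Step 2.} You correctly flag the inequality $a(E;\olx,\nu^{-1}(\a)^tD)\ge \ha(E;X,\a^t)$ as the crux, but your sketch (``track how $\hk_{Y/X}-J_{Y/X}$ transforms under the finite map $\nu$'') is not how the paper proceeds, and it is not clear that approach can be made to work cleanly. The paper's argument is more direct and crucially uses the $\bQ$-Gorenstein hypothesis on $X$ itself rather than passing to $\olx$ first: since $\nu^*K_X\sim_\bQ K_{\olx}+D_\nu$, the condition $a(E;\olx,\a_\nu^t D_\nu)\ge 0$ is literally the same as $a(E;X,\a^t)\ge 0$ computed with the pulled-back $K_X$. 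Then one only needs $\ha(E;X,\a^t)\le a(E;X,\a^t)$, which is exactly the content of the proof of Proposition~\ref{mj-and-usual}: write $I_r\omega_X^{[r]}$ for the image of $(\wedge^d\Omega_X)^{\otimes r}\to\omega_X^{[r]}$, set $J_r=\jx^r:I_r$, use that $\overline{I_rJ_r}=\overline{\jx^r}$ (by \cite[Corollary~9.4]{e-Mus2}, whose proof works without normality), and conclude $r(\hk_{Y/X}-J_{Y/X})\le rK_{Y/X}$. No separate analysis of Mather or Jacobian discrepancies under normalization is required.
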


 We sometimes come across  the necessity to compare singularities on two schemes 
 connected by a proper birational morphism.
 The following shows the relation of the Mather-Jacobian discrepancies between the two
schemes:

 \begin{thm}[Theorem \ref{comparison}] 
  Let $\varphi: X'\to X$ be a proper birational morphism which can be extended to 
  a proper birational morphism $\Phi: A'\to A$ of non singular varieties 
  such that  $X'\subset A'$, $X\subset A$  with codimension $c$ and $\Phi$ is isomorphic at the generic point 
  of each irreducible component of $X$.
  Let   $I_X$ and $I_{X'}$ be defining ideals of $X$ and $X'$ in $A$ and $A'$, respectively.
  
  If $I_{X'}{\frak b'}\subset I_X\oaa\subset I_{X'}\frak b$ holds for some ideals $\frak b$, $\frak b'$ in $\oaa$ that do not vanish on any irreducible component of $X'$, then 
  there exists an embedded resolution $\Psi:\ola\to A'$ of $X'$ in $A'$ such that the restriction
  $(\Phi\circ\Psi)|_\olx: \olx \to X$ is a log resolution of $(X, \a\jx)$ and satisfying:
  $$\hk_{\olx/{X'}}-J_{\olx/{X'}}-cR'\leq\hk_{\olx/{X}}-J_{\olx/{X}}-\Psi^*K_{A'/A}\leq \hk_{\olx/{X'}}-J_{\olx/{X'}}-cR,$$
where $R$ and $R'$ are  effective divisors on $\ola$ such that ${\frak b}\ooa=\ooa(-R)$ 
and ${\frak b'}\ooa=\ooa(-R')$.
\end{thm}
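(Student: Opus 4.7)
The plan is to build a common model $\Psi:\ola\to A'$ on which both $\hk_{\olx/X}-J_{\olx/X}$ and $\hk_{\olx/X'}-J_{\olx/X'}$ admit a clean divisorial expression in terms of ambient ramification, and then to convert the hypothesized ideal inclusions into divisorial inequalities that compare one expression with the other. I would take $\Psi:\ola\to A'$ to be an embedded log resolution of $X'\subset A'$ that simultaneously principalizes the ideals $I_X\oaa$, $\mathfrak{b}$, $\mathfrak{b}'$, $\mathfrak{a}\oaa$ and $\jx\oaa$ and that produces a simple normal crossings configuration together with the strict transform $\olx$ of $X'$; such a $\Psi$ exists by Hironaka. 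Because $\Phi$ is an isomorphism at the generic points of the components of $X$, the restriction $(\Phi\circ\Psi)|_{\olx}$ is proper and birational, and by the principalization it is a log resolution of $(X,\mathfrak{a}\jx)$.

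The core of the argument is the ambient-space description of Mather--Jacobian discrepancies that underlies the inversion of adjunction recalled above: for $Y$ of codimension $c$ in a smooth ambient $B$, the Mather--Jacobian log-discrepancy over $Y$ of a divisor $E$ on $\olx$ equals the usual log-discrepancy of $E$ for the pair $(B,I_Y^c)$. Lifting this from valuations to divisors on $\olx$ yields the identities
$$\hk_{\olx/X'}-J_{\olx/X'}=(K_{\ola/A'}-cF_{X'})|_{\olx},\qquad \hk_{\olx/X}-J_{\olx/X}=(K_{\ola/A}-cF_X)|_{\olx},$$
where $F_{X'}$ (resp.\ $F_X$) is the effective divisor on $\ola$ extracted from $I_{X'}\ooa$ (resp.\ from $I_X\ooa$) after factoring out the codimension-$c$ contribution that cuts out $\olx$. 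Since $A$ and $A'$ are both smooth, $K_{\ola/A}=K_{\ola/A'}+\Psi^*K_{A'/A}$. The delicate point here, and the main obstacle of the proof, is that $I_Y$ is not locally principal of codimension one, so isolating the divisorial piece $F_Y$ from the codimension-$c$ piece requires the divisorial refinement of inversion of adjunction developed in \cite{dd} and \cite{Ishii}.

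Once these identities are in place, the rest is bookkeeping. Pulling the hypothesized inclusion $I_{X'}\mathfrak{b}'\subset I_X\oaa\subset I_{X'}\mathfrak{b}$ back to $\ola$ and using $\mathfrak{b}\ooa=\ooa(-R)$, $\mathfrak{b}'\ooa=\ooa(-R')$, together with the assumption that $\mathfrak{b}$ and $\mathfrak{b}'$ do not vanish on any component of $X'$ (so $R$ and $R'$ do not contain $\olx$ in their support), one obtains the inequalities of effective Cartier divisors
$$F_{X'}+R\leq F_X\leq F_{X'}+R'.$$
Substituting these bounds into the second displayed identity and subtracting $\Psi^*K_{A'/A}|_{\olx}$ gives exactly the claimed chain
$$\hk_{\olx/X'}-J_{\olx/X'}-cR'\leq \hk_{\olx/X}-J_{\olx/X}-\Psi^*K_{A'/A}\leq \hk_{\olx/X'}-J_{\olx/X'}-cR.$$
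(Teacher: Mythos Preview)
Your overall strategy matches the paper's: build a common resolution $\ola$, express both Mather--Jacobian discrepancies through ambient data, and compare via the ideal inclusions. But the two displayed identities you rely on are not justified by inversion of adjunction, and the second one is in general false on the model you construct.

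Inversion of adjunction (Proposition~\ref{inversion}) equates \emph{minimal} log discrepancies, i.e.\ infima over all divisors with center in a given $W$; it does not give a divisor-by-divisor equality $\ha(E;Y)=a(E;B,I_Y^c)$, and indeed a prime divisor $E$ on $\olx$ has no canonical extension to a divisorial valuation on the ambient $\ola$. The identity $\hk_{\olx/X'}-J_{\olx/X'}=(K_{\ola/A'}-cG)|_{\olx}$ is correct, but its source is the Jacobian/chain-rule computation of Eisenstein \cite[Lemma 4.3]{ee}, packaged here as Lemma~\ref{lemcom}, and it requires that $\Psi$ be a \emph{factorizing} resolution of $X'\subset A'$, i.e.\ $I_{X'}\ooa=I_{\olx}\ooa(-G)$ exactly. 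Your construction gives this for $X'$ once you take $\Psi$ to be factorizing, so your $F_{X'}=G$ is fine.

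The genuine gap is the second identity. Even after you principalize $I_X\oaa$, the composite $\Phi\circ\Psi:\ola\to A$ need not be a factorizing resolution of $X\subset A$: there is no reason for $I_X\ooa$ to split as $I_{\olx}\cdot\ooa(-F_X)$, so your ``divisor $F_X$ extracted after factoring out the codimension-$c$ contribution'' is not well defined, and the exact equality $\hk_{\olx/X}-J_{\olx/X}=(K_{\ola/A}-cF_X)|_{\olx}$ is unavailable. The paper circumvents this: from $I_{X'}\ooa=I_{\olx}\ooa(-G)$ and the hypothesis one gets
\[
I_{\olx}\ooa(-G-R')\ \subset\ I_X\ooa\ \subset\ I_{\olx}\ooa(-G-R),
\]
and Lemma~\ref{lemcom} converts these \emph{inclusions} directly into the \emph{inequalities}
\[
(K_{\ola/A}-c(G+R'))|_{\olx}\ \le\ \hk_{\olx/X}-J_{\olx/X}\ \le\ (K_{\ola/A}-c(G+R))|_{\olx}.
\]
Substituting $K_{\ola/A}=K_{\ola/A'}+\Psi^*K_{A'/A}$ and $(K_{\ola/A'}-cG)|_{\olx}=\hk_{\olx/X'}-J_{\olx/X'}$ then finishes. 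So your final ``bookkeeping'' step is right in spirit, but you must feed it the inequalities from Lemma~\ref{lemcom} rather than an identity for $X$ that you do not have.
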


By this theorem we obtain many examples of MJ-singularities and it is useful to determine the 2-dimensional MJ-log canonical singularities in \S 5.
We also obtain the relation of MJ-singularities and the  singularities appeared recently
in the paper by De Fernex and Hacon (\cite{dfh}).

\begin{thm}[Theorem \ref{dFH}] Assume that $X$ is normal. If a pair $(X, \a^t)$ is MJ-klt (resp. MJ-log canonical), then it is log terminal
(resp. log canonical) in the sense of De Fernex and Hacon.
\end{thm}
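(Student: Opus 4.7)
The plan is to compare the two log discrepancies divisor by divisor on a common log resolution. Fix a log resolution $f\colon Y\to X$ that factors through the Nash blowup of $X$ and through the blowup of $X$ along $\a\jx$. On such a $Y$, $f^{*}(\wedge^{n}\Omega_{X})/(\operatorname{torsion})=\omega_{Y}(-\hKY)$ as a sub-line-bundle of $\omega_{Y}$, and $\jx\oy=\oy(-J_{Y/X})$. For each integer $m\geq1$, let $K^{(m)}_{Y/X}$ denote the effective divisor on $Y$ characterized by
\[
\im\bigl(f^{*}\ox(mK_{X})\to\omega_{Y}^{\otimes m}\bigr)=\omega_{Y}^{\otimes m}\otimes\oy(-K^{(m)}_{Y/X}).
\]
In the framework of de Fernex--Hacon the sequence $\tfrac{1}{m}K^{(m)}_{Y/X}$ is non-increasing in $m$ under divisibility (a direct consequence of the natural inclusion $(\omega_{X}^{[m]})^{\otimes k}\hookrightarrow\omega_{X}^{[mk]}$ for every $k\geq1$), its limit $K^{\mathrm{dFH}}_{Y/X}$ is the de Fernex--Hacon discrepancy divisor, and $(X,\a^{t})$ is dFH log terminal (resp.\ dFH log canonical) if and only if $\ord_{E}(K^{\mathrm{dFH}}_{Y/X})-t\val_{E}(\a)+1$ is strictly positive (resp.\ non-negative) for every prime divisor $E$ over $X$.

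The theorem follows once we establish the divisor-by-divisor bound
\[
m(\hKY-J_{Y/X})\leq K^{(m)}_{Y/X}\qquad\text{for every }m\geq 1,
\]
because dividing by $m$ and passing to the limit gives $\hKY-J_{Y/X}\leq K^{\mathrm{dFH}}_{Y/X}$, hence $\ha(E;X,\a^{t})\leq a_{\mathrm{dFH}}(E;X,\a^{t})$ for every $E$, so MJ-klt (resp.\ MJ-log canonical) implies dFH log terminal (resp.\ dFH log canonical). This bound is equivalent to the inclusion $\im(f^{*}\ox(mK_{X})\to\omega_{Y}^{\otimes m})\subseteq\omega_{Y}^{\otimes m}\otimes\oy(m(J_{Y/X}-\hKY))$ on $Y$, which I would obtain by pulling back the sheaf-theoretic inclusion on $X$
\[
\jx^{m}\cdot\ox(mK_{X})\;\subseteq\;\im\bigl((\wedge^{n}\Omega_{X})^{\otimes m}\to\ox(mK_{X})\bigr),
\]
the map being the $m$-fold tensor power of the natural map $\wedge^{n}\Omega_{X}\to\omega_{X}$ composed with the reflexive-hull map $\omega_{X}^{\otimes m}\to\omega_{X}^{[m]}=\ox(mK_{X})$. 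The identifications $f^{*}(\wedge^{n}\Omega_{X})/(\operatorname{torsion})=\omega_{Y}(-\hKY)$ and $\jx\oy=\oy(-J_{Y/X})$ convert this sheaf inclusion into the required inclusion on $Y$.

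The main obstacle is the sheaf lemma on $X$. For $m=1$ it reduces to $\jx\cdot\omega_{X}\subseteq\im(\wedge^{n}\Omega_{X}\to\omega_{X})$, which holds because the cokernel of the natural map $\wedge^{n}\Omega_{X}\to\omega_{X}$ is supported on the singular locus of $X$, where $\jx$ vanishes. For general $m\geq 2$ the difficulty is that $\omega_{X}^{[m]}$ strictly contains $\omega_{X}^{\otimes m}$ over the non-lci locus, so the $m=1$ case cannot be applied factor by factor without first showing that $\jx^{m}\cdot\omega_{X}^{[m]}$ lies in $\omega_{X}^{\otimes m}$. I would obtain this by embedding $X$ in a smooth ambient variety $A$ of codimension $c$, realizing $\omega_{X}^{[m]}$ via the Ext-representation $\omega_{X}^{[m]}\simeq\mathcal{E}xt^{c}_{\oa}(\ox,\omega_{A}^{\otimes m})^{\vee\vee}$, and expressing local sections as residues of $m$-fold products of differential forms on $A$; the Jacobian $\jx$, generated by the $c\times c$ minors of the Jacobian matrix of the defining ideal, absorbs the non-lci obstructions. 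Combined with the $m=1$ case, this gives the required inclusion.
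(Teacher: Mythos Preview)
Your overall plan---reduce to the divisor inequality $\hKY-J_{Y/X}\leq K_{m,Y/X}$ on a common log resolution, then conclude $\ha(E;X,\a^t)\leq a_m(E;X,\a^t)$ for every prime divisor $E$---is exactly the paper's strategy (this is their Lemma~\ref{lemdfh}). The gap is in your proof of that inequality.

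Your argument hinges on the sheaf inclusion $\jx^{m}\cdot\omega_X^{[m]}\subseteq\im\bigl((\wedge^{d}\Omega_X)^{\otimes m}\to\omega_X^{[m]}\bigr)$ on $X$. For $m=1$ you justify this by saying the cokernel is supported on the singular locus, ``where $\jx$ vanishes.'' But support on $V(\jx)$ only tells you that some \emph{power} $\jx^{N}$ annihilates the cokernel (Nullstellensatz), not $\jx$ itself; pulling back $\jx^{N}\omega_X\subset\im$ yields only $\hKY-NJ_{Y/X}\leq K_{1,Y/X}$, which is useless. In fact the image of $\wedge^{d}\Omega_X\to\omega_X$ is governed not by $\jx$ directly but by the ideals $\j_M|_X\subsetneq\jx$ for complete intersections $M\supset X$ (Ein--Musta\c{t}\u{a}), and there is no reason for $\jx$ to be contained in this image ideal when $X$ is not l.c.i. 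Your sketch for $m\geq 2$ (Ext-representation, residues) does not address this point either.

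The paper avoids the inclusion altogether. It writes the image of $(\wedge^d\Omega_X)^{\otimes m}\to\oy(D_m)$ as $I\cdot\oy(D_m)$ and, for each reduced complete intersection $M\supset X$, uses Ein--Musta\c{t}\u{a} to get $I\cdot J_M=(\j_M|_X)^m\oy$. Summing over all $M$ and passing to integral closures gives $\overline{I\cdot J}=\overline{\jx^m\oy}$ with $J=\sum_M J_M\subset\oy$. Since valuations see only integral closures, on a further resolution this yields $B+C=mJ_{Y'/X}$ with $C\geq 0$, hence $m(\hk_{Y'/X}-J_{Y'/X})=mK_{Y'}-\nu^*D_m-C\leq mK_{Y'}-\nu^*D_m$. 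The essential idea you are missing is that one does not prove $\jx^m\subset I$; one proves only that $\ord_E(I)\leq\ord_E(\jx^m)$ for every divisor $E$, and integral closure (together with varying $M$) is precisely the device that delivers this.
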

 
 By the property of De Fernex and Hacon's singularities we obtain:
 \begin{cor}[Corollary \ref{usual}] If a pair $(X, \a^t)$ is MJ-klt (resp. MJ-log canonical), 
then there is a boundary $\Delta$ on $X$ such that $((X, \Delta), \a^t)$ is klt 
(resp. log canonical) in the usual sense.
\end{cor}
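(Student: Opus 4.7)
The plan is to combine the preceding theorem with a boundary-existence result of De Fernex and Hacon. First, I would apply Theorem \ref{dFH} directly: since $(X,\a^t)$ is assumed MJ-klt (resp.\ MJ-log canonical) with $X$ normal, the theorem tells us that $(X,\a^t)$ is log terminal (resp.\ log canonical) in the sense of De Fernex and Hacon.

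The next step is to invoke the characterization theorem in \cite{dfh}, which asserts that on a normal variety $X$, a pair $(X,\a^t)$ is log terminal (resp.\ log canonical) in the sense of De Fernex and Hacon if and only if there exists an effective $\bQ$-divisor $\Delta$ on $X$ such that $K_X+\Delta$ is $\bQ$-Cartier and $((X,\Delta),\a^t)$ is klt (resp.\ log canonical) in the usual sense. Concatenating this with the first step yields the corollary exactly as stated.

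I expect essentially no technical obstacle, since the argument is a two-line chaining of already-proved results. The only care required is to quote the correct form of De Fernex and Hacon's statement: their notion of singularity is defined via a limiting procedure on pullbacks of canonical divisors rather than through an explicit boundary, so the nontrivial content of the corollary is in fact packaged inside their theorem, and the point of Theorem \ref{dFH} is precisely to make their notion available from our MJ-hypothesis.
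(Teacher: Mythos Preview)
Your proposal is correct and matches the paper's argument exactly: the paper deduces the corollary by applying Theorem~\ref{dFH} and then invoking \cite[Theorem 1.2]{dfh}, just as you describe. The normality hypothesis you flag is indeed implicit from the context of Theorem~\ref{dFH}.
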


By the proof  of the above theorem, the relation of MJ-multiplier ideals and De Fernex-Hacon's multiplier ideals. 

\begin{thm}[Theorem \ref{mjmulti}] Let $(X,\a^t)$ be a pair with a normal variety $X$, an ideal $\a$ on $X$ and
$t\in \bR_{\geq 0}$. Then the following inclusion holds for every $m\in \bN$:
$$\hj(X,\a^t)\subset \j_m(X,\a^t),$$
in particular 
$$\hj(X,\a^t)\subset \j(X,\a^t).$$
\end{thm}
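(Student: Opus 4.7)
The plan is to reduce the claimed inclusion of ideal sheaves to a pointwise inequality of $\bQ$-divisors on a single log resolution, and then to prove that inequality by exhibiting a natural sheaf map on $Y$.

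First, fix $m\in\bN$ and choose a log resolution $\varphi:Y\to X$ that factors through the Nash blowup of $X$, resolves the ideals $\a$ and $\jx$, and for which the image of the natural map $\varphi^*\omega_X^{[m]}\to \omega_Y^{\otimes m}$ is an invertible subsheaf. On such a $Y$ one can write
\[
\hj(X,\a^t)=\varphi_*\oy(\lceil \hk_{Y/X}-J_{Y/X}-tF\rceil),\qquad \j_m(X,\a^t)=\varphi_*\oy(\lceil K_{m,Y/X}-tF\rceil),
\]
where $F$ is the effective divisor with $\a\oy=\oy(-F)$ and $mK_{m,Y/X}$ is the effective divisor characterized by $\operatorname{image}(\varphi^*\omega_X^{[m]}\to \omega_Y^{\otimes m})=\omega_Y^{\otimes m}(-mK_{m,Y/X})$. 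Since both $\lceil\cdot\rceil$ and $\varphi_*$ preserve inclusions of subsheaves, it is enough to prove the inequality
\[
\hk_{Y/X}-J_{Y/X}\leq K_{m,Y/X}
\]
as $\bQ$-divisors on $Y$.

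To establish this inequality I would construct a natural factorization of the evaluation map,
\[
\varphi^*\omega_X^{[m]}\longrightarrow \omega_Y^{\otimes m}\bigl(-m(\hk_{Y/X}-J_{Y/X})\bigr)\hookrightarrow \omega_Y^{\otimes m}.
\]
On the preimage of the smooth locus of $X$ both targets coincide with $\omega_Y^{\otimes m}$, and the map is the identity there. To extend across the singular locus, write $\nu:\widetilde X\to X$ for the Nash blowup and $\mu:Y\to\widetilde X$ for the factor map. By definition of the Mather discrepancy, the tautological line bundle $\widetilde\omega$ on $\widetilde X$ satisfies $\mu^*\widetilde\omega=\omega_Y(-\hk_{Y/X})$. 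The relation between the reflexive sheaf $\omega_X^{[m]}$ and the $m$-th tensor power of the Mather canonical is controlled by the Jacobian: there is a natural inclusion $\nu^*\omega_X^{[m]}\cdot \nu^{-1}(\jx)^m\hookrightarrow\widetilde\omega^{\otimes m}$, and pulling this back to $Y$ using $\jx\oy=\oy(-J_{Y/X})$ yields the desired factorization. The maximality of $mK_{m,Y/X}$ among effective divisors $E$ with $\operatorname{image}(\varphi^*\omega_X^{[m]}\to\omega_Y^{\otimes m})\subset\omega_Y^{\otimes m}(-mE)$ then forces $m(\hk_{Y/X}-J_{Y/X})\leq mK_{m,Y/X}$.

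The main obstacle is verifying the Jacobian twist, i.e.\ producing the natural inclusion of $\nu^*\omega_X^{[m]}\cdot \nu^{-1}(\jx)^m$ into $\widetilde\omega^{\otimes m}$. This is a local/reflexive-sheaves statement near the singular locus of $X$, best checked by embedding $X\subset A$ into a smooth ambient variety of codimension~$c$ and comparing local generators of the two sheaves in terms of the $c\times c$ minors of the Jacobian matrix defining $X$; the $m$-th reflexive power on $X$ ``costs'' one factor of $\jx^m$ when rewritten as an honest subsheaf of the line bundle $\widetilde\omega^{\otimes m}$, which is exactly the correction recorded by $J_{Y/X}$. Once the local comparison is in hand, the rest of the argument is formal. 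Finally, the second assertion $\hj(X,\a^t)\subset \j(X,\a^t)$ is immediate, since De Fernex--Hacon's ideal $\j(X,\a^t)$ is the union of the ideals $\j_m(X,\a^t)$ over $m\in\bN$.
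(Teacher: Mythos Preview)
Your overall reduction is the same as the paper's: reduce to the pointwise inequality $\hk_{Y/X}-J_{Y/X}\le K_{m,Y/X}$ on a common log resolution, after which the inclusion of multiplier ideals follows formally. This inequality is exactly the content of the paper's Lemma~\ref{lemdfh}, and the paper simply says the theorem ``follows immediately'' from it.

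Where you diverge is in how you propose to prove the inequality. You phrase it as a ``Jacobian twist'' $\nu^*\omega_X^{[m]}\cdot\nu^{-1}(\jx)^m\hookrightarrow\widetilde\omega^{\otimes m}$ and suggest checking it by comparing local generators via $c\times c$ Jacobian minors of an embedding $X\subset A$. This is the right intuition but your sketch elides the real difficulty: for non-Gorenstein $X$ the reflexive sheaf $\omega_X^{[m]}$ has no simple local description in terms of Jacobian minors, so a direct comparison is not available. The paper's proof of Lemma~\ref{lemdfh} handles this by an indirect route you do not mention: one factors $(\wedge^d\Omega_X)^{\otimes m}\to\omega_X^{[m]}\to(\omega_M|_X)^m$ through the dualizing sheaf of a reduced complete intersection $M\supset X$, where Ein--Musta\c{t}\u{a}'s Proposition~9.1 gives the explicit image $(\j_M|_X)^m(\omega_M|_X)^m$; then one \emph{sums over all such $M$} and passes to integral closures to replace $\j_M$ by $\jx$. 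This two-step trick (complete intersections plus integral closure) is what makes your asserted inclusion true, and it is the substance of the argument. A minor additional point: $K_{m,Y/X}$ need not be effective and there is in general no honest sheaf map $\varphi^*\omega_X^{[m]}\to\omega_Y^{\otimes m}$; both should be compared as rank-one subsheaves of the constant sheaf $\omega_{K(X)}^{\otimes m}$, which is how the paper sets things up via the divisor $D_m$.
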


It is known that canonical (resp. log canonical) singularities are stable under a 
small flat deformation.
We obtain the similar results for MJ-singularities.
Here, we do not need the flatness of the deformation.
We define that $\{(X_\tau, \a^t_\tau)\}_{\tau\in T}$ is a deformation of $(X_0,\a^t_0)$,
if there is a surjective morphism 
$\pi: X\to T$ 
with equidimensional reduced fibers $X_\tau=\pi^{-1}(\tau)$ of common dimension $r$ for all closed points $\tau\in T$ and there exists an ideal $\a$ on the total space $X$
such that $\a^t_\tau=\a^t\o_{X_\tau}$ are not zero for 
 all $\tau\in T$ .

\begin{thm}[Theorem \ref{deform-log}, \ref{deform-cano}]
  Let $\{(X_\tau, \a^t_\tau)\}_{\tau\in T}$ be a deformation of $(X_0,\a^t_0)$.
  Assume $(X_0,\a^t_0)$ is MJ-canonical (resp. MJ-log canonical) at $x\in X_0$.
  Then there are neighborhoods $X^*\subset X$ of $x$ and $T^*\subset T$ of $0$ such that
  $X^*_\tau$ is MJ-canonical (resp. MJ-log canonical) for every closed point $\tau\in T^*$.
\end{thm}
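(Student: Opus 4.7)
The plan is to reduce, via Inversion of Adjunction (Proposition~1.1), to the analogous deformation statement for ordinary canonical / log canonical pairs on a smooth ambient family, and then to invoke the existence of relative log resolutions in characteristic zero.

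Since the assertion is local at $x$, I shrink $X$ to an affine neighborhood of $x$ and embed it as a closed subscheme of $A := \bA^N \times T$ for some $N$. The projection $\pi_A : A \to T$ is smooth of relative dimension $N$, and each fiber $X_\tau \subset A_\tau \cong \bA^N$ has the constant codimension $c := N - r$. Choose an ideal $\widetilde{\a} \subset \o_A$ with $\widetilde{\a}\,\o_X = \a$, and let $I_X \subset \o_A$ denote the defining ideal of $X$. Applied fiberwise, Inversion of Adjunction yields
$$\hmld(W;\, X_\tau, \a_\tau^t) \;=\; \mld\bigl(W;\, A_\tau, \widetilde{\a}_\tau^t I_{X_\tau}^c\bigr)$$
for every proper closed subset $W \subset X_\tau$. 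Consequently, a point $p \in X_\tau$ is MJ-canonical (resp. MJ-log canonical) for $(X_\tau, \a_\tau^t)$ if and only if it is canonical (resp. log canonical) for the ambient pair $(A_\tau, \widetilde{\a}_\tau^t I_{X_\tau}^c)$ in the usual sense.

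This reduces the theorem to the following smooth-ambient statement: if the pair $(A_0, \widetilde{\a}_0^t I_{X_0}^c)$ is canonical (resp. log canonical) at $x$, then there exist open neighborhoods $A^* \subset A$ of $x$ and $T^* \subset T$ of $0$ such that $(A_\tau, \widetilde{\a}_\tau^t I_{X_\tau}^c)$ is canonical (resp. log canonical) at every point of $A^* \cap A_\tau$ for every closed $\tau \in T^*$. To prove this, take a canonical, functorial log resolution $f : Y \to A$ of $(A, \widetilde{\a} I_X)$. By smooth-base-change functoriality of canonical resolutions in characteristic zero, after shrinking $T$ to a Zariski-open neighborhood $T^*$ of $0$, the fiber $f_\tau : Y_\tau \to A_\tau$ is a log resolution of $(A_\tau, \widetilde{\a}_\tau I_{X_\tau})$ for every closed $\tau \in T^*$, and both $K_{Y_\tau / A_\tau}$ and the vanishing orders $\val_E(\widetilde{\a}_\tau)$, $\val_E(I_{X_\tau})$ along each exceptional prime divisor $E$ of $Y/A$ are fiberwise-constant. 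Hence the log discrepancies $a(E_\tau;\, A_\tau, \widetilde{\a}_\tau^t I_{X_\tau}^c)$ are independent of $\tau$. The locus of points in $A$ where the fiberwise pair fails to be canonical (resp. log canonical) equals the union, over those exceptional divisors $E$ of $f$ whose constant log discrepancy is $<1$ (resp. $<0$), of their centers in $A$; this is a closed subset of $A$ that avoids $x$, and its complement provides $A^*$. Setting $X^* := X \cap A^*$ completes the reduction.

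The main obstacle is the smooth-base-change compatibility of the canonical log resolution of $(A, \widetilde{\a} I_X)$: one needs the resolution to be constructed by successive blow-ups along smooth centers transverse to the fibers of $\pi_A$, so that the simple-normal-crossings condition on the total transform descends to each fiber. This is exactly the content of functoriality of canonical resolution with respect to smooth morphisms, a standard property of the algorithms of Bierstone--Milman, Hironaka, and Villamayor in characteristic zero. Once granted, the invariance of fiberwise log discrepancies carries local MJ-canonicity (resp. MJ-log canonicity) at $x \in X_0$ to a relatively-open neighborhood, as required.
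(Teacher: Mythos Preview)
Your reduction via fiberwise Inversion of Adjunction to the smooth-ambient pairs $(A_\tau, \widetilde\a_\tau^t I_{X_\tau}^c)$ is correct, and the paper itself notes (Remark following Theorem~\ref{deform-log}) that the log canonical case can be argued entirely on $A$ and $A_\tau$. The gap is the next step: you claim that a functorial log resolution $f:Y\to A$ of $(A,\widetilde\a I_X)$ restricts to a log resolution of $(A_\tau,\widetilde\a_\tau I_{X_\tau})$ for every $\tau$ in a Zariski neighborhood of $0$, and you justify this by ``smooth-base-change functoriality''. That functoriality says only that if $g:A'\to A$ is \emph{smooth} then the canonical resolution of $(A',g^*I)$ is pulled back from that of $(A,I)$; it says nothing about the closed, non-smooth immersion $A_0\hookrightarrow A$. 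Concretely, with $A=\bA^2_{x,y}\times\bA^1_\tau$ and $I_X=(y^2-x^3-\tau)$, the divisor $V(I_X)\subset A$ is smooth, so the functorial log resolution is the identity, yet its restriction to $\tau=0$ is the cuspidal cubic $V(y^2-x^3)\subset\bA^2$, which is not simple normal crossings. Generic smoothness only produces a dense open $T_0\subset T$ over which the restriction is a log resolution; nothing forces $0\in T_0$, and your argument needs exactly that.

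The paper circumvents this by applying Inversion of Adjunction in the \emph{vertical} direction, comparing $X_0$ not to $A_0$ but to the total space $X$. After reducing to the case where $T$ is a smooth curve, so that $X_0=\{f=0\}$ is a Cartier divisor in $X$, Strong Inversion of Adjunction (Corollary~\ref{stin}) gives $\hmld(x;X_0,\a_0^t)=\hmld(x;X,\a^t(f))$, whence the hypothesis forces $(X,\a^t)$ itself to be MJ-log canonical near $x$; only then is the generic-fiber lemma (Lemma~\ref{generalfiber}) invoked, and together with the hypothesis at $\tau=0$ this covers a neighborhood of $0$ in the curve $T$. The MJ-canonical case (Theorem~\ref{deform-cano}) is genuinely harder, because $\hmld(x;X,\a^t)\geq 1$ does not by itself make $(X,\a^t)$ MJ-canonical on an open set; the paper argues by contradiction via a fiberwise jet-scheme dimension count. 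Your approach would need a simultaneous log resolution over a neighborhood of $0$, which does not follow from standard resolution packages.
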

The lower semi continuity of MJ-minimal log discrepancies is also proved:
\begin{prop}[Proposition \ref{lowsc}]
 Let $\{(X_\tau, \a^t_\tau)\}_{\tau\in T}$ be a deformation of $(X_0,\a^t_0)$ and let 
 $\pi:X\to T$ is the morphism giving the deformation.
 Let $\sigma: T\to X$ a section of $\pi$.
 Then, the map $T\to \bR, \tau \mapsto \hmld(\sigma(\tau), X_\tau, \a^t_\tau)$ is lower 
 semi continuous.
\end{prop}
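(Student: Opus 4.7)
The plan is to reduce, via the inversion of adjunction stated in the introduction, to the already-available lower semi-continuity of the usual minimal log discrepancy for a family of pairs on a smooth ambient variety.

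First, since the statement is local on both $X$ and $T$, I would shrink to an affine neighborhood of $\sigma(\tau_0)$ and choose a closed embedding $X \hookrightarrow A \times T$, where $A = \bA^N$ and $\pi$ is the restriction of the second projection. Because the fibers $X_\tau$ are reduced and equidimensional of common dimension $r$, each sits in $A \times \{\tau\} \cong A$ with the \emph{same} codimension $c = N - r$. Let $I_X \subset \o_{A \times T}$ be the defining ideal of $X$, and let $\widetilde{\a} \subset \o_{A \times T}$ be any coherent lift of $\a$; write $\widetilde{\a}_\tau$ and $I_{X_\tau}$ for their restrictions to the fiber $A \times \{\tau\}$.

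Next, I would apply the inversion of adjunction from the introduction fiber by fiber. Since $A$ is nonsingular and $X_\tau \subset A$ is a closed subscheme of codimension $c$ on which $\widetilde{\a}_\tau$ is nonzero on every component, this yields
\[
\hmld\bigl(\sigma(\tau);\, X_\tau,\, \a_\tau^{\,t}\bigr) \;=\; \mld\bigl(\sigma(\tau);\, A,\, \widetilde{\a}_\tau^{\,t}\, I_{X_\tau}^{\,c}\bigr)
\]
for every closed point $\tau \in T$. The function in the proposition therefore coincides with the fiberwise usual minimal log discrepancy of the single pair $(A \times T,\, \widetilde{\a}^{\,t}\, I_X^{\,c})$ on the smooth ambient space $A \times T$, evaluated along the section $\sigma$.

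Finally, I would invoke the lower semi-continuity of the usual $\mld$ for a family of pairs on a smooth variety, which follows from the description of $\mld$ in terms of contact loci in jet schemes \`a la Ein--Mustata--Yasuda: the relevant cylinders in the relative arc space of $A \times T / T$ are constructible and their codimensions in fibers are upper semi-continuous, hence the fiberwise $\mld$ is lower semi-continuous in $\tau$. Combined with the equality displayed above, this gives the claim. The main obstacle is making the last step precise, namely comparing the jet schemes of the individual pairs $(A,\, \widetilde{\a}_\tau^{\,t} I_{X_\tau}^{\,c})$ with the relative jet scheme of $(A \times T,\, \widetilde{\a}^{\,t} I_X^{\,c})$ over $T$ and verifying that no new exceptional valuations are created under specialization to $\tau_0$; once this compatibility is established, constructibility of jet cylinders delivers semi-continuity directly.
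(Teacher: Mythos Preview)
Your reduction via inversion of adjunction to the smooth ambient $A$ is exactly what the paper does (implicitly, through the jet-scheme formula of Proposition~\ref{description}), so the overall architecture matches. The gap is in your final step. You assert that upper semi-continuity of the codimension of each contact cylinder in fibers yields lower semi-continuity of the fiberwise $\mld$, but $\mld$ is an \emph{infimum} over countably many such codimensions, and an infimum of lower semi-continuous functions need not be lower semi-continuous. Constructibility of the individual cylinders is not enough to conclude.

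The paper closes this gap with two ingredients you are missing. First, using a simultaneous log resolution (Lemma~\ref{generalfiber}) it shows that $\tau\mapsto\hmld(\sigma(\tau);X_\tau,\a_\tau^t)$ is \emph{constant} on a dense open $T^*\subset T$; this is where a fiberwise comparison of resolutions, rather than of jet schemes, is actually needed. Second, for each fixed $(m,n)$ the term $d_{nm}(\tau)$ in the jet description is lower semi-continuous (via upper semi-continuity of fiber dimension, using $\bG_m$-invariance to get closedness of the image), so the set $U_{nm}=\{\tau: d_{nm}(0)\le d_{nm}(\tau)\}$ is nonempty open; the standing hypothesis that $k$ is uncountable then forces $T^*\cap\bigcap_{m,n}U_{nm}\neq\emptyset$, producing a single $\tau\in T^*$ at which all the inequalities hold simultaneously, whence the value at $0$ is bounded by the generic value. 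You locate the obstacle in the compatibility of relative versus fiberwise jet schemes, but that part is routine; the substantive issue is the countable-infimum problem, and its resolution requires both generic constancy and the uncountability of the base field.
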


In the last section we determine all MJ-canonical, MJ-log canonical singularities up
to dimension 2.

\begin{prop}[Proposition \ref{1dim}]
Let $(X,x)$ be a singularity on one-dimensional reduced scheme. Then the following hold:
\begin{enumerate}
\item[(i)] $(X,x)$ is MJ-canonical if and only if   it is non singular.
\item[(ii)] $(X,x)$ is MJ-log canonical if and only if it is non singular or ordinary node.
\end{enumerate}
\end{prop}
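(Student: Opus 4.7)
The plan is to apply the Inversion of Adjunction from the introduction with $\a=\ox$, which translates the question into a standard minimal log discrepancy computation on a smooth ambient space. Choosing a local embedding $X\subset A=\bA^N$ at $x$ with $N=\emb\dim_x X$ (so $c=N-1$), one gets
\[
\hmld(x;X,\ox)=\mld(x;A,I_X^{N-1}).
\]
The proof then reduces to checking that this number is $\geq 1$ iff $X$ is smooth at $x$, and $\geq 0$ iff $X$ is smooth or an ordinary node at $x$; the two conclusions (i) and (ii) follow through the definitions of MJ-canonical and MJ-log canonical.

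For the ``if'' direction, when $N=1$ we have $X$ smooth at $x$, $c=0$, and $\mld(x;\bA^1)=1$. When $X$ is an ordinary node, embed it as $\{xy=0\}\subset\bA^2$ so that $c=1$ and $I_X=(xy)$; a single blow-up of the origin produces an exceptional $E$ with $\ord_E K_{Y/A}=1$ and $\ord_E(xy)=2$, hence log discrepancy $0$, and one checks this $E$ computes the $\mld$, giving $\hmld(x;X)=0$. So the node is MJ-log canonical but not MJ-canonical.

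For the converse direction, suppose $X$ is singular at $x$ and is not an ordinary node; we want $\hmld(x;X)<0$. Minimality of the chosen embedding forces $I_X\subset\mathfrak{m}_{A,x}^2$, and $N\geq 2$. If $N=2$, then $I_X=(f)$ with $m:=\mult_x f\geq 2$, and the classical bound $\lct_x(f)\leq 2/m\leq 1$, combined with its equality characterization (equality iff $m=2$ and the tangent cone at $x$ splits as two distinct lines, i.e.\ iff $f$ defines an ordinary node), gives $\lct_x(f)<1$ in our setting; hence $\mld(x;A,(f))=-\infty$. If $N\geq 3$, blow up $x\in A$: the exceptional $E\cong\bP^{N-1}$ has $\ord_E K_{Y/A}=N-1$ and $\ord_E I_X\geq 2$, so
\[
\ha(E;A,I_X^{N-1})\leq(N-1)-2(N-1)+1=2-N\leq -1,
\]
which forces $\mld(x;A,I_X^{N-1})\leq -1<0$.

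The main non-routine ingredient is the equality characterization in the lct bound for plane curves used in the $N=2$ analysis; this is classical and can be verified directly by running a log resolution of $(\bA^2,(f))$ through successive blow-ups at the infinitely near singular points of $f$, where one sees that log canonicity is preserved precisely when the tangent cone splits into two distinct lines at each stage, a process that terminates immediately in the nodal case.
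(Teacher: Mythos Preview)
Your proof is correct and takes a genuinely different route from the paper's. The paper argues structurally: for (i) it invokes Proposition~\ref{cano=normal} (MJ-canonical $\Rightarrow$ normal), or alternatively Proposition~\ref{emb} ($\emb\le 2d-1=1$), to force smoothness; for (ii) it cites \cite[Corollary~3.15]{Ishii} to reduce to the case $\hmld(x;X,\ox)=0$ and then quotes the classification in \cite{ir} to identify the ordinary node. Your approach instead pushes everything through the Inversion of Adjunction formula $\hmld(x;X,\ox)=\mld(x;\bA^N,I_X^{N-1})$ and computes directly on the ambient space: the $N\ge 3$ case is a one–blow-up estimate (which incidentally reproves Proposition~\ref{emb} in dimension~1), and the $N=2$ case reduces to the classical fact that a reduced plane curve germ has log canonical threshold $\ge 1$ exactly when it is nodal. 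This buys you a more self-contained argument within the paper's own framework (Inversion of Adjunction is stated in the introduction), at the cost of importing the plane-curve $\lct$ characterization rather than the heavier results of \cite{dd}, \cite{eim}, \cite{ir}. One small point of care: the step ``the two conclusions follow through the definitions'' actually uses Proposition~\ref{atx}, and only the implication MJ-canonical $\Rightarrow \hmld\ge 1$ is available in general (the converse fails, cf.\ the example after Proposition~\ref{atx}); your chain of implications smooth $\Rightarrow$ MJ-canonical $\Rightarrow \hmld\ge 1 \Rightarrow$ smooth is nonetheless valid, so the logic closes up.
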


\begin{thm}[Theorem \ref{rdp}]
Let $(X,x)$ be a singularity on a 2-dimensional reduced scheme. Then $(X,x)$ is MJ-canonical
if and only if it is non singular or rational double.
\end{thm}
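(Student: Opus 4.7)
The plan is to establish both directions using the inversion of adjunction recalled in the introduction together with the classical Du Val classification of canonical surface singularities.

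For the ``if'' direction, a rational double point $(X,x)$ is a hypersurface singularity in $\bA^3$, in particular a local complete intersection. For LCI $X$ one has the identity $\hk_{Y/X}-J_{Y/X}=K_{Y/X}$ on any log resolution $Y\to X$, where $K_{Y/X}$ is the usual relative canonical divisor (well-defined because hypersurfaces are Gorenstein). Since rational double points are canonical in the classical sense, $K_{Y/X}\ge 0$ on every exceptional prime divisor, so $(X,x)$ is MJ-canonical; the smooth case is immediate.

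For the converse, suppose $(X,x)$ is MJ-canonical and $x$ is singular. Embed $X\subset A:=\bA^n$ with $n=\emb(X,x)$; then $n\ge 3$ and $I_X\subset \mathfrak{m}_x^2$, so $\mult_x(I_X)\ge 2$. By the inversion of adjunction with $c=n-2=\codim_A X$,
$$\hmld(x;X)=\mld(x;A,I_X^c)\ge 1.$$
Evaluating on the exceptional divisor $E$ of the blowup of $A$ at $x$ gives
$$1\le a(E;A,I_X^c)=n-(n-2)\mult_x(I_X)\le 4-n,$$
forcing $n=3$. Hence $X$ is a hypersurface in $\bA^3$ at $x$.

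I next claim $\sing X$ is isolated at $x$. Otherwise a curve $C\subset \sing X$ passes through $x$; at a general $\eta\in C$ a suitable coordinate change presents $X$ locally as a product $Z\times \bA^1$ with $(Z,0)$ a non-smooth plane curve singularity. Since the Mather and Jacobian discrepancies are compatible with a smooth product factor, MJ-canonicity propagates from $(X,\eta)$ to $(Z,0)$, contradicting Proposition \ref{1dim}. Therefore $X$ is regular in codimension one, hence normal (being a Cohen--Macaulay hypersurface with $R_1$) and Gorenstein, in particular normal $\bQ$-Gorenstein. Proposition \ref{mj-and-usual} then implies $(X,x)$ is canonical in the usual sense, and by the Du Val classification $(X,x)$ is a rational double point.

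The principal obstacle is the isolated-singularity claim in the converse direction; its proof relies on the product formula for Mather--Jacobian discrepancies that reduces the analysis at a general point of a hypothetical singular curve to the one-dimensional classification of Proposition \ref{1dim}.
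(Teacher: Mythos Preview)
Your overall strategy is sound and differs from the paper's. The paper's converse is a two-line argument: it cites the classification from \cite{ir} of $2$-dimensional points with $\hmld(x;X,\ox)\ge 1$ (nonsingular, rational double, normal crossing double, or pinch point) and then invokes Proposition~\ref{cano=normal} (MJ-canonical $\Rightarrow$ normal) to discard the two non-normal types. Your route is more hands-on: you bound the embedding dimension directly via inversion of adjunction (this is essentially Proposition~\ref{emb}), argue that the singular locus is isolated, deduce normality from $R_1+S_2$, and then apply Proposition~\ref{mj-and-usual} together with the classical Du Val classification. This avoids the black-box \cite{ir} result, at the cost of a longer argument.

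There is, however, a real gap in the isolated-singularity step. The assertion that at a general point of a singular curve $C\subset X$ the surface is analytically a product $Z\times\bA^1$ is \emph{not} a general fact about hypersurfaces with one-dimensional singular locus: for example $x^4+y^4+zx^2y^2=0$ is singular along the $z$-axis, but the cross-ratio of the four tangent lines in the transversal slice varies with $z$, so no local product structure exists there. The claim does happen to hold in your situation, because the same blow-up computation you used for the embedding bound also forces $\mult_p X\le 2$ at every point of an MJ-canonical hypersurface in $\bA^3$, so along $C$ the transversal quadratic form has rank $1$ or $2$ and a parametrized Morse lemma yields the product; but this argument is missing from your write-up. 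The simplest repair is to bypass the product claim entirely and quote Proposition~\ref{cano=normal} to get normality directly; your final paragraph then goes through unchanged.
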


The following theorem gives the total list of 2-dimensional MJ-log canonical singularities.

\begin{thm}[Theorem \ref{hyp3}, \ref{ci}]
Let $(X,0)$ be a singularity on a 2-dimensional reduced scheme with  $\emb(X, 0)=3$.  
Then, 
$(X, 0)$ is an MJ-log canonical singularity  if and only if   $X$ is defined by 
$f(x,y,z)\in k[[x,y,z]]$ as follows:  
\begin{enumerate}
\item[(i)] $\mult_0f=3$ and the projective tangent cone of $X$ at 0 is a reduced curve with at worst ordinary nodes.
\item[(ii)] $\mult_0 f=2$  
\begin{enumerate}
\item $f=x^2+y^2+g(z)$, $\deg g\geq 2$.  
\item $f=x^2+g_3(y,z)+g_4(y,z) $, $\deg g_i\geq i$, $g_3$ is homogeneous of degree 3 and $g_3\neq l^3 $ ($l$ linear)  
\item $f=x^2+y^3+yg(z)+h(z)$, $\mult_0g\leq 4$ or $\mult_0h\leq 6$.  
\item $f=x^2+g(y,z)+h(y,z)$, $g$ is homogeneous of degree 4 and it does not have a linear factor with multiplicity more than 2.  
\end{enumerate}
\end{enumerate}

\noindent 
 Let $(X,0)$ be a singularity on a 2-dimensional reduced scheme with  \\
 $\emb(X, 0)=4$.  
Then, the following hold:
\begin{enumerate}
\item[(iii)] In case 
$(X, 0)$ is locally a complete intersection:

\noindent
$X$ is MJ-log canonical at 0  if and only if \\
$\widehat{\ox_{,0}}\simeq k[[x_1,x_2,x_3,x_4]]/(f,g)$, where $f,g$ satisfy the 
conditions that  
 $\mult_0f=\mult_0 g=2$  and $V(\int(f),\int(g))\subset \bP^3$ 
is a reduced curve with at worst ordinary double points.
\item[(iv)] In case
$(X,0)$ is not locally a complete intersection:

\noindent
$X$ is  MJ-log canonical at 0  if and only if $X$ is a subscheme of
 a locally  complete intersection scheme $M$ which is MJ-log canonical at 0.

\end{enumerate}

\end{thm}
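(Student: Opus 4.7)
The unifying strategy is to reduce each claim to an ordinary-log-canonicity question on a smooth ambient space via the Inversion of Adjunction, and then to analyze the resulting pair by explicit blow-ups.

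For $\emb(X,0)=3$ (cases (i), (ii)), embed $X=V(f)$ into a smooth 3-fold $A$, so that the codimension is $c=1$ and Inversion of Adjunction gives
$$\hmld(0;X,\ox)=\mld(0;A,I_X)=\mld(0;A,V(f)).$$
Thus $X$ is MJ-log canonical at $0$ if and only if the pair $(A,V(f))$ is log canonical at $0$. In case (i), a single blow-up $\pi:A'\to A$ of the origin gives $E\cong\bP^2$ with $K_{A'/A}=2E$ and $\ord_E(\pi^*V(f))=\mult_0 f=3$, so the log discrepancy of $E$ equals $2+1-3=0$; by adjunction this reduces log canonicity of $(A,V(f))$ to that of $(\bP^2,V(f_3))$, which for a plane curve on a smooth surface is exactly the condition that the projective tangent cone $V(f_3)$ be reduced with at worst ordinary nodes. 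In case (ii), the single blow-up gives $E$ with positive log discrepancy, but the higher-order structure of $f$ along the degenerate conic $V(f_2)$ must be tested by further (possibly weighted) blow-ups. After a formal coordinate change, $f$ is put in one of the normal forms (a)--(d) distinguished by the rank of $f_2$ and the shape of the cubic/quartic terms sitting over the degeneracy locus. For each such normal form I would run the blow-up sequence adapted to the Newton polyhedron of $f$ and verify that the stated inequalities (for instance, $\mult_0 g\le 4$ or $\mult_0 h\le 6$ in (c); no linear factor of multiplicity greater than $2$ for the quartic in (d)) are precisely the inequalities forcing every exceptional divisor to have non-negative log discrepancy.

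When $\emb(X,0)=4$ and $X$ is locally a complete intersection (case (iii)), embed $X=V(f,g)\subset A=\bA^4$ with $c=2$. Inversion of Adjunction gives $\hmld(0;X,\ox)=\mld(0;A,I_X^2)$. Blowing up the origin yields $E\cong\bP^3$ with $K_{A'/A}=3E$ and, since $\mult_0 f=\mult_0 g=2$, $\ord_E(I_X)=2$, so the log discrepancy of $E$ equals $3+1-2\cdot 2=0$. Continuing the log resolution of $I_X^2$ by blowing up above the scheme $C=V(\int(f),\int(g))\subset\bP^3=E$, a local analysis at each point of $C$ shows that every further exceptional divisor has log discrepancy $\ge 0$ precisely when $C$ is a reduced curve with at worst ordinary double points; any worse structure of $C$ produces a divisor of negative log discrepancy.

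For case (iv), $X$ is not locally a complete intersection in $A=\bA^4$. The strategy is to exhibit a locally complete intersection scheme $M\subset\bA^4$ containing $X$ as an irreducible component of the same dimension and then compare Mather--Jacobian discrepancies of $X$ and $M$ via Theorem \ref{comparison}: its inequality forces $X$ to inherit MJ-log canonicity from any such MJ-lc $M$. Conversely, given that $X$ is MJ-lc, one constructs $M$ by selecting two carefully chosen generators of $I_X$ whose initial forms cut out in $\bP^3$ a reduced curve with only ordinary nodes, as required by case (iii). The main obstacle I expect, both in (ii) and in (iv), is the combinatorial case analysis: verifying that the normal forms stated in the theorem exhaust exactly the MJ-log canonical locus, and in (iv) that an $M\supset X$ with the right initial-form structure can always be chosen inside $I_X$.
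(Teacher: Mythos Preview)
Your strategy for parts (i)--(iii) is broadly the same as the paper's: reduce via Inversion of Adjunction to a log-canonicity question on the smooth ambient, then resolve by blowing up the origin. The paper phrases the reduction slightly differently---it works on the strict transform $X'$ and uses Corollary~\ref{blowup} to compare $\hk_{\olx/X}-J_{\olx/X}$ with $\hk_{\olx/X'}-J_{\olx/X'}$---but this is equivalent to your ambient-space computation via Inversion of Adjunction. For case (ii) the paper organizes the analysis by the $\tau$-invariant of the quadratic part and relies on results from \cite{ir} and the Newton-polygon criterion (Corollary~\ref{degenerate}), rather than running an explicit weighted-blow-up sequence; your approach could be made to work, but it is not what the paper does.

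There is a genuine error in your treatment of (iv). Theorem~\ref{comparison} compares Mather--Jacobian discrepancies along a \emph{proper birational} morphism $X'\to X$ extending to a birational morphism of smooth ambient spaces; it says nothing about a closed immersion $X\hookrightarrow M$ with $X$ a component of $M$. The inequality you need, $\hmld(0;X,\ox)\geq\hmld(0;M,\o_M)$, instead follows from the adjunction inequality (the paper cites \cite[Corollary~3.12]{Ishii}); equivalently, apply Inversion of Adjunction to both $X\subset A$ and $M\subset A$ and use $I_M\subset I_X$ to compare $\mld(0;A,I_X^c)$ with $\mld(0;A,I_M^c)$.

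For the converse in (iv) you substantially underestimate the work. ``Carefully chosen generators'' is not enough: the paper first uses Lemma~\ref{emb4} (a jet-scheme dimension count) to show that $E_X=V(\int(I))\subset\bP^3$ is a reduced curve with at worst ordinary nodes, then argues $\deg E_X\leq 3$ since $X$ is not a complete intersection, and finally runs a full case analysis of all non-complete-intersection reduced nodal curves of degree $\leq 3$ in $\bP^3$ (skew lines, twisted cubic, conic plus secant line, chain of three lines) to exhibit in each case an explicit complete-intersection curve $E''\supset E_X$ defined by two quadrics and having only ordinary nodes. Several a priori possible configurations are ruled out along the way. This classification is the real content of the argument, and your proposal does not indicate how to carry it out.
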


\noindent
{\bf Acknowledgement.}
Main part  of the paper was done during the program Commutative Algebra 
 in 2013 at MSRI.
The authors thank the organizers of the program for the excellent organization and 
thank also MSRI for the hospitality.

\section{Preliminaries}
\noindent
In this paper $X$ is always a connected reduced equidimensional affine scheme of finite type over 
an uncountable algebraically closed field $k$ of characteristic zero. 
Sometimes we put some additional conditions on $X$, but in that case it is always stated clearly. 
Denote the dimension $\dim X=d$.
A variety in this paper always means an irreducible reduced separated scheme of 
finite type over $k$.

Let $\hx \to X$ be the Nash blow-up (for the definition, see for example \cite{DEI}).
The Nash blow-up has the following property:

\noindent
If a resolution $\varphi: Y\to X$ factors through the Nash blow-up $\hx\to X$, the canonical 
homomorphism $\varphi^*(\Omega^d_X) \to \Omega^d_Y$ has the invertible image 
(\cite{DEI}).

\begin{defn} [\cite{DEI}]\label{mather}  Let  $\varphi \colon Y\to X$ be a resolution of singularities of $X$ that factors through the Nash blow-up of $X$. 
By the above comment, the image of the canonical homomorphism
$$\varphi^*(\Omega^d_X) \to \Omega^d_Y$$
is an invertible sheaf of the form $J  \Omega^d_Y$, 
where $J$ is the invertible ideal sheaf on $Y$ that defines an effective divisor supported on the exceptional locus of $\varphi$. 
This divisor is called the \emph{Mather discrepancy divisor} and denoted by $\hKY$.
\end{defn}

\begin{defn}
Recall that the \emph{Jacobian ideal} $\j_X$ of a variety $X$ is the $d^{\rm th}$ Fitting ideal 
${\rm Fitt}_d(\Omega_X)$ of $\Omega_X$. 
If $\varphi :Y\to X$ is a log resolution of $\j_X$, we denote by $J_{Y/X}$ the effective divisor
on $Y$ such that  $\j_X\o_Y=\o_Y(-J_{Y/X})$. 
This divisor is called the \emph{Jacobian discrepancy divisor}. 
\end{defn}

Here, we note that every log-resolution of $\j_X$ factors through the Nash blow-up (\cite[Remark 2.3]{eim}).

\begin{defn}\label{mjdiscrep}
Let $\a\subseteq \o_X$ be a nonzero ideal on $X$, and 
 $t\in{\bR}_{\geq 0}$. Given a log resolution $\varphi :Y\to X$ of $\j_X \a$,
we denote by $Z_{Y/X}$ the effective divisor on $Y$ such that
 $\a\o_Y=\o_Y(-Z_{Y/X})$. 
 For a prime divisor $E$ over $X$, we define the Mather-Jacobian-log discrepancy
 (MJ-log discrepancy for short) at $E$ as
 $$\ha(E; X, \a^t):=\ord_E(\hk_{Y/X}-J_{Y/X}- t Z_{Y/X})+1.$$

\end{defn}

\begin{rem}
For nonzero ideals $\a_1,\ldots,\a_r$ on  $X$, one can similarly define a mixed
MJ-log discrepancy $\ha (E; X, \a_1^{t_1}\cdots\a_r^{t_r})$ for every $t_1,\ldots,t_r\in{\bR}_{\geq 0}$. With the notation in Definition~\ref{mjdiscrep}, if 
$f$ is a log resolution of ${\j }_X\a_1\cdots \a_r$, and if we put
$\a_i\cO_Y=\cO_Y(-Z_i)$, then
$$\ha (E; X, \a_1^{t_1}\cdots\a_r^{t_r})
=\ord_E(\hk_{Y/X}-J_{Y/X}- t_1Z_1-\ldots-t_rZ_r)+1.$$
For simplicity, we will mostly state the results for a pair $(X, \a^t)$ with one  ideal, but all statements
have obvious generalizations to the mixed case.
\end{rem}
\begin{rem}\label{note-on-a}
If $X$ is normal and locally a complete intersection, then $\ha (E;X, \a^t)=a (E;X, \a^t)$, where the right hand side is the usual log discrepancy $\ord_E(K_{Y/X}-t Z_{Y/X})+1$.
Indeed, in this case the image of the canonical map $\Omega^d_X \to \omega_X$ is $\j_X\omega_X$,
hence 
$\hk_{Y/X}-J_{Y/X}=K_{Y/X}$.
In particular, we see that $\ha (E; X, \a^t)=a(E; X, \a^t)$ if $X$ is smooth.
\end{rem}

\begin{defn} Let $X$ be a  normal and $\bQ$-Gorenstein variety.
Let $W$ be a proper closed subset of $X$.
The {\it minimal log-discrepancy} of $(X,\a^t)$ along $W$ is defined as follows:

\noindent
If $\dim X\geq 2$,
$$\mld(W; X,\a^t)=\inf \{a(E; X, \a^t) \mid E\ \mbox{prime\ divisor\ over}\ X \ \mbox{with\ center\ in\ } W\}.$$
When $\dim X=1$, we use the same definition as above, unless the infimum is negative,
in which case we make the convention that $\mld (W;X,\a^t)=-\infty$.
\end{defn}

Now returning to the general setting on $X$, we  define a modified invariant.

\begin{defn}
Let $W$ be a closed subset of $X$ such that it does not contain an irreducible component of $X$.
(We call such a closed subset a ``strictly  proper closed subset" in this paper.)
Let $\eta$ be a point of $X$ such that its closure is a strictly  proper closed subset of $X$.
The {\it Mather-Jacobian minimal log-discrepancy} of $(X,\a^t)$ along $W$ (resp. at $\eta$) are defined as follows:

\noindent
If $\dim X\geq 2$,
$$\hmld(W; X,\a^t)=\inf \{\ \ha(E;X,\a^t) \mid E\ \mbox{prime\ divisor\ over}\ X
\ \mbox{with\ center\ in\ }W\}.$$
$$\hmld(\eta; X,\a^t)=\inf \{\ \ha(E;X,\a^t) \mid E\ \mbox{prime\ divisor\ over}\ X
\ \mbox{with\ center\  }\overline{\{\eta\}}\}.$$

(Note that we strictly distinguish between ``center in $Z$" and ``center $Z$".)

When $\dim X=1$, we use the same definition as above, unless the infimum is negative,
in which case we make the convention that $\hmld (W;X,\a^t)=-\infty$ (resp. $\hmld (\eta;X,\a^t)=-\infty$ ).
\end{defn}

\begin{rem}\label{conflict}
\begin{enumerate}
\item[(i)]
By Remark \ref{note-on-a}, we have $$\mld(W;X,\a^t)=\hmld (W;X,\a^t),$$ if $X$ is
normal and locally a 
complete intersection.
\item[(ii)]
In case $\dim X\geq 2$, if there is a prime divisor $E$ with the center in $W$ such that
$\ha(E;X,\a^t)<0$, then $\hmld(W; X,\a^t)=-\infty$. 
This is proved by using $\hk_{Y'/X}-J_{Y'/X}=K_{Y'/Y}+\psi^*(K_{Y/X}-J_{Y/X})$
for another resolution $Y'\to X$ factoring through $Y\to X$, 
in the similar way as the usual discrepancy case.
\item[(iii)]
There are some conflicts of notation in \cite{dd},  \cite{eim}, \cite{Ishii} and \cite{ir}, since these papers are working on the same materials and some of these papers were done independently of  others.
Here, we propose the notation $\hmld (W;X,\a^t)$ for Mather-Jacobian minimal log discrepancy, while in \cite{dd} it is denoted as $\mld^\diamond(W;X,\a^t)$ and in \cite{ir}
as $\widehat\mld (W;X, \j_X\a^t)$. 
We hope the new notation here is appropriate to unify the notation.

\end{enumerate}
\end{rem}

\begin{prop}[Inversion of Adjunction \cite{dd},\cite{Ishii}]\label{inversion}
Let  $A$ be a non singular variety containing $X$ as a closed
subscheme of codimension $c$ and $W$ a strictly  proper closed subset of $X$.
 Let $\widetilde\a\subset \o_A$ be an ideal such that its image ${\a}:=\widetilde\a\o_X\subset \o_X$ is non-zero on each irreducible component of $X$.
 Denote the defining ideal of $X$ in $A$ by $I_X$.
Then,
$$\hmld(W; X,{\a^t})=\hmld(W;A,\widetilde\a^t I_X^c)=\mld(W;A,\widetilde\a^t I_X^c).$$
\end{prop}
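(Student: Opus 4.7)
The plan is to prove the two equalities separately. The second equality $\hmld(W;A,\widetilde\a^t I_X^c)=\mld(W;A,\widetilde\a^t I_X^c)$ is immediate: since $A$ is smooth, Remark~\ref{note-on-a} gives $\ha(E;A,\widetilde\a^t I_X^c)=a(E;A,\widetilde\a^t I_X^c)$ for every divisor $E$ over $A$, so the two infima coincide. The real content is therefore the first equality, and my strategy is to set up a bijection between prime divisors $E$ over $A$ with center in $W$ and prime divisors $E'$ over $X$ with center in $W$, and then to verify the divisor-wise identity
\[
a(E;A,\widetilde\a^t I_X^c)=\ha(E';X,\a^t)
\]
for matched pairs $(E,E')$, after which passing to the infimum on both sides finishes the proof.

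To build the correspondence I would take a common log resolution $g:A'\to A$ of the ideal $I_X\cdot\widetilde\a$, chosen so that the strict transform $Y\subset A'$ of $X$ is smooth and meets the other components of the total transform transversally. The restriction $h:=g|_Y:Y\to X$ is then a log resolution of $(X,\a\jx)$ that factors through the Nash blow-up of $X$, since it dominates the embedded resolution. After further blow-ups in $A'$ one may assume that any prime divisor $E$ over $A$ with center in $X$ is realized as a component of $A'$ transverse to $Y$; set $E':=E|_Y$. Conversely, every prime divisor on a log resolution of $(X,\a\jx)$ arises in this way by extending the resolution of $X$ to one of $A$. The elementary matching $\ord_E(\widetilde\a\o_{A'})=\ord_{E'}(\a\o_Y)$ takes care of the $\widetilde\a^t$ versus $\a^t$ boundary contribution and of the condition $\a\neq 0$ on each component of $X$.

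The heart of the proof is the identity
\[
\ord_E(K_{A'/A})-c\cdot\ord_E(I_X\o_{A'})=\ord_{E'}(\hk_{Y/X}-J_{Y/X}),
\]
which, combined with the matching above and the $+1$ conventions on both sides, yields the divisor-wise equality. This formula is the Mather--Jacobian adjunction identity proved in \cite{dd} and \cite{Ishii}; conceptually it comes from taking determinants in the conormal sequence $I_X/I_X^2\otimes\o_Y\to\Omega_A|_Y\to\Omega_X|_Y\to 0$ and observing that $\jx$ measures exactly the defect between $\bigwedge^d\Omega_X$ and the ambient top form, while $\hk_{Y/X}$ bookkeeps the Nash-blow-up correction of Definition~\ref{mather}. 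The main obstacle is precisely this identity: because $X$ is not assumed normal or locally a complete intersection, one cannot invoke the classical residue/adjunction, and must instead use the Fitting-ideal description of $\jx$ to show that the intrinsic correction $-J_{Y/X}$ absorbs exactly the ambient $c\cdot I_X$ term. Once this is in place, taking the infimum over prime divisors with center in $W$ produces $\hmld(W;X,\a^t)=\mld(W;A,\widetilde\a^t I_X^c)$, completing the proof.
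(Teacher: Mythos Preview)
Your treatment of the second equality is fine and matches the paper's one-line remark (Remark~\ref{conflict}(i)). The problem is your strategy for the first equality.

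There is no bijection between prime divisors over $A$ with center in $W$ and prime divisors over $X$ with center in $W$. A prime divisor over $A$ has dimension $d+c-1$; a prime divisor over $X$ has dimension $d-1$. The set of divisorial valuations on $A$ centered in $W$ is genuinely larger: for example, blow up $A$ along a smooth subvariety $C\subset A$ that meets $X$ only in a point of $W$ but is not contained in $X$; the resulting exceptional divisor $E$ has center in $W$, but there is no canonical ``$E'=E|_Y$'' on a resolution $Y$ of $X$ to attach to it. Your construction ``after further blow-ups \ldots\ set $E':=E|_Y$'' only produces, for each $E'$ over $X$, \emph{some} $E$ over $A$ (via a factorizing resolution, essentially Lemma~\ref{lemcom}), and this yields only one inequality, namely $\hmld(W;X,\a^t)\ge \mld(W;A,\widetilde\a^tI_X^c)$. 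The reverse inequality --- showing that no ``wild'' divisor over $A$ can make $\mld(W;A,\widetilde\a^tI_X^c)$ strictly smaller --- is exactly the hard direction, and the divisor-matching framework gives no handle on it.

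The paper (and the references \cite{dd},\cite{Ishii}) does not argue divisor-by-divisor at all: it passes to arc spaces and jet schemes, where both minimal log discrepancies are expressed as infima of codimensions of contact loci (cf.\ Proposition~\ref{description} and Remark~\ref{generic-remark}). On the ambient side one looks at $\cont^{\ge m}(\widetilde\a)\cap\cont^{\ge n}(I_X)$ in $\li(A)$; on the $X$ side one uses the change-of-variables formula involving $\hk_{Y/X}-J_{Y/X}$ to compute codimensions of cylinders in $\li(X)$. The equality of the two infima then comes from the geometry of the arc space of $X$ sitting inside that of $A$, not from matching individual divisors. So the missing idea in your proposal is precisely this jet-scheme translation; without it the $\le$ direction has no proof.
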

Here, the second equality is trivial by Remark \ref{conflict}, (1).
The Inversion of Adjunction is proved by discussions of jet schemes and we also 
use them in this paper.
Here, we introduce the basic notion of jet schemes.

\begin{defn}
 Let $K\supset k$ be a field extension and $m\in \bZ_{\geq 0}$.
  A  morphism \( \spec K[t]/(t^{m+1})\to X \) is called an \( m \)-jet
  of \( X \) and \( \spec K[[t]]\to X \) is called an {\it arc} of \( X \).
\end{defn}

\begin{say}
\label{field}
  Let \( {\mathcal S}ch/k \) be the category of \( k \)-schemes  
   and \( {\mathcal S}et \) the category of sets.
  Define a contravariant functor  \( F_{m}: {\mathcal S}ch/k \to {\mathcal S}et \)
  by 
$$
 F_{m}(Y)=\Hom _{k}(Y\times_{\spec k}\spec k[t]/(t^{m+1}), X).
$$
  Then, \( F_{m} \) is representable by a scheme \( \lm(X) \) of finite
  type over \( k \), {\it i.e.,} $\lm(X)$ is the fine moduli scheme of $m$-jets of $X$.
  
  The scheme \( \lm(X) \) is  called the {\it scheme of \( m \)-jets} of \( X \).
  
   In the same way, the fine moduli scheme $\li(X)$ of arcs of $X$ also exists and
   it is called the {\it scheme of arcs} of $X$.
   We should note that $\li(X)$ is not necessarily of finite type over $k$. 
The canonical surjection \( k[t]/(t^{m+1})\to k[t]/(t^{n+1}) \) $(n<m\leq \infty)$
  induces a morphism \( \psi_{mn}:\lm(X)\to \ln(X) \).
  
  If $X=\spec k[x_1,\ldots, x_N]/(f_1,\ldots, f_r)$, 
  then $$\lm(X)=\spec k[\bx^{(0)},\bx^{(1)},\ldots, \bx^{(m)}]/(F_i^{(j)})_{1\leq i\leq r, 0\leq j\leq m},$$
  where $\bx^{(j)}=(x_1^{(j)},\ldots, x_N^{(j)})$ and 
  $ \sum_{j=0}^\infty F^{(j)}t^j$ is the Taylor
expansion of $f(\sum_j {\bx}^{(j)} t^j)$, hence $F^{(j)} \in
k[{\bx^{(0)}}, \ldots, \bx^{(j)}]$.
If $0\in X\subset \bA^N$, we have 
\begin{equation}\label{fiber}
\psi_{m0}^{-1}(0)=\spec k[\bx^{(1)},\ldots, \bx^{(m)}]/(\overline{F}_i^{(j)})_{1\leq i\leq r, 0\leq j\leq m},
\end{equation}
where $\overline{F}_i^{(j)}$ is the image of ${F}_i^{(j)}$ by the canonical projection map \\
$k[\bx^{(0)},\bx^{(1)},\ldots, \bx^{(m)}]\to k[\bx^{(1)},\ldots, \bx^{(m)}]$ which sends $\bx^{(0)}$ to $0$.
\end{say}

\begin{rem}\label{keisan}%***(keisan)
  Under the notation above, for a polynomial $f\in k[x_1,\ldots, x_N]$, let 
  $$f\left(\sum x_1^{(j)}t^j, \ldots, \sum x_N^{(j)}t^j\right)=F^{(0)}+F^{(1)}t+ F^{(2)}t^2+\cdots $$
  be the Taylor expansion.
  Then a monomial in $F^{(j)}$ is of the type $$ x_{i_1}^{(e_1)}\cdots x_{i_r}^{(e_r)} \ \
  (e_l\geq 0, i_l \in \{1,\ldots, N\},\ \  \sum_l e_l=j).$$
  Here, if $r>j$, then the monomial must contain a factor $x_{i_l}^{(0)}$, therefore  the image of this monomial by the projection map $k[\bx^{(0)},\bx^{(1)},\ldots, \bx^{(m)}]\to k[\bx^{(1)},\ldots, \bx^{(m)}]$ is zero.
  By this observation we obtain that if $j<\mult_0 f$, then $\overline F^{(j)}=0$ and if $j=\mult_0 f$, 
  then $\overline F^{(j)}=\int f(\bx^{(1)})$, where $\int f$ is the initial term of $f$ with the usual 
  grading in $k[x_1,\ldots, x_N]$.

\end{rem}

By the Inversion of Adjunction,  we can describe Mather-Jacobian discrepancy in terms  of the jet schemes of $A$ as follows:

\begin{prop} \label{description}%***(description)
Let $X$, $A$, $c$,  $\a$ and $\widetilde\a$ be as  in Proposition \ref{inversion}. Let $N=d+c$ and $Z=V(\widetilde\a)$.
Let $\psi_m: \mathcal{L}^\infty(A)  
\to \lm(A)$ and $\psi_{m,n}:\lm(A)\to \ln(A)$ be the canonical projections of jet schemes of $A$.
Then, 
 
$$\hmld(W; X,{\a^t})=\inf_{m,n\in \bZ_{\geq 0}}\{(M+1)N-(m+1)t-(n+1)c $$
$$\ \ \ \ \ \ \ \ \ \ \ \ \ \ \ \ \ \ \ \ - \dim\left(\psi_{Mm}^{-1}(\lm(Z))
\cap \psi_{Mn}^{-1}(\ln(X))\cap \psi_{M0}^{-1}(W)\right)\},$$
where $M=\max\{m,n\}$.

In particular for $\a^t=\ox$ we obtain:
\begin{equation}\label{hmld} \hmld(W; X, \ox)=\inf_{n\in \bZ_{\geq 0}}\{(n+1)d-\dim(\psi^X_{n0})^{-1}(W)\},
\end{equation}
where $\psi^X_{n0}: \lm(X)\to \mathcal{L}^0( X)=X$ is the canonical projection of jet schemes of $X$.
\end{prop}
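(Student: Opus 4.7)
The plan is to combine the Inversion of Adjunction with the jet-scheme description of the usual mld on a smooth variety. Proposition~\ref{inversion} gives
$$\hmld(W;X,\a^t)=\mld(W;A,\widetilde\a^t I_X^c),$$
reducing the problem to computing the ordinary mld of a pair on the smooth variety $A$ of dimension $N=d+c$.

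Next I would invoke the mixed-ideal version of Musta\c{t}\u{a}'s jet-scheme formula: for ideals $\b_1,\b_2\subset\oa$ and exponents $t_1,t_2\in\bR_{\geq 0}$,
$$\mld(W;A,\b_1^{t_1}\b_2^{t_2})=\inf_{m,n\geq 0}\bigl\{(M+1)N-(m+1)t_1-(n+1)t_2-\dim C_{m,n}\bigr\},$$
with $M=\max(m,n)$ and $C_{m,n}=\psi_{Mm}^{-1}(\lm(V(\b_1)))\cap\psi_{Mn}^{-1}(\ln(V(\b_2)))\cap\psi_{M0}^{-1}(W)$. The essential point is the identification of $\psi_{Mm}^{-1}(\lm(V(\b_i)))$ with the set of $M$-jets $\gamma_M$ satisfying $\ord_{\gamma_M}\b_i\geq m+1$, i.e.\ the level-$M$ truncation of $\cont^{\geq m+1}(\b_i)\subset\li(A)$. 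Once this is recognized, the formula is the computation of the codimension in $\li(A)$ of the cylindrical set $\cont^{\geq m+1}(\b_1)\cap\cont^{\geq n+1}(\b_2)\cap\{\gamma:\gamma(0)\in W\}$ at its level-$M$ truncation, combined with the standard Musta\c{t}\u{a} calculation using a log resolution of $\b_1\b_2$. Substituting $\b_1=\widetilde\a$, $\b_2=I_X$, $t_1=t$, $t_2=c$ (so $V(\widetilde\a)=Z$ and $V(I_X)=X$) yields the main formula.

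For the special case $\a^t=\ox$ one drops the $\widetilde\a$ factor and reduces to the single-ideal formula
$$\mld(W;A,I_X^c)=\inf_{n\geq 0}\bigl\{(n+1)N-(n+1)c-\dim(\ln(X)\cap\psi_{n0}^{-1}(W))\bigr\}.$$
Under the closed embedding $\ln(X)\hookrightarrow\ln(A)$ the intersection $\ln(X)\cap\psi_{n0}^{-1}(W)$ is canonically $(\psi_{n0}^X)^{-1}(W)$, and $N-c=d$, giving the displayed simplification. The main obstacle is the mixed-ideal step: the one-ideal Musta\c{t}\u{a} formula is standard, but the extension to two ideals amounts to some bookkeeping with multi-contact loci together with the cylinder-dimension identity $\codim(\mathrm{cyl},\li(A))=(M+1)N-\dim(\mathrm{cyl})_M$, valid as soon as $M$ is large enough to detect both order-of-vanishing constraints; modulo this, the proof is formal.
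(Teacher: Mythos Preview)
Your proposal is correct and follows essentially the same route as the paper: reduce via Inversion of Adjunction to the ordinary $\mld$ on the smooth ambient $A$, then express $\mld(W;A,\widetilde\a^t I_X^c)$ via contact loci in the arc space and pass to finite-level jets. The paper cites \cite[Remark 3.8]{Ishii} for the mixed-ideal contact-locus formula, performs the index shift $m\mapsto m+1$, $n\mapsto n+1$, and then makes exactly the identifications $\cont^{\geq m+1}(\widetilde\a)=\psi_m^{-1}(\lm(Z))$, $\cont^{\geq n+1}(I_X)=\psi_n^{-1}(\ln(X))$ together with $\codim=(M+1)N-\dim$ that you describe; the special case is handled the same way.
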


\begin{proof}
By the Inversion of Adjunction, we can represent
$$\hmld(W; X,{\a^t})=\hmld(W;A,\widetilde\a^t I_X^c)=\mld(W;A,\widetilde\a^t I_X^c).$$
By \cite[Remark 3.8]{Ishii}, this is represented as 
 $$\mld(W;A,\widetilde\a^t I_X^c)=\inf_{m,n\in\bN}\{\codim(\cont^{\geq m}(\a)\cap
 \cont^{\geq n}(I_X)\cap\cont^{\geq 1}(I_W))-mt-nc\},$$
 where $\codim$ is the codimension in the arc space $\li(A)$.
 By shifting $m$ to $m+1$ and $n$ to $n+1$, this is represented as 
 $$\inf_{m,n\in\bZ_{\geq 0}}\{\codim(\cont^{\geq {m+1}}(\a)\cap
 \cont^{\geq n+1}(I_X)\cap\cont^{\geq 1}(I_W))-(m+1)t-(n+1)c\},$$
 Now noting that  $$\operatorname{Cont}^{\geq m+1}(\a)=\psi_{m}^{-1}(\lm(Z))\ \mbox{and}$$
 $$\operatorname{Cont}^{\geq n+1}(I_X)=\psi_{n}^{-1}(\ln(X)),$$
 we obtain the equality 
 $$\codim(\cont^{\geq {m+1}}(\a)\cap
 \cont^{\geq n+1}(I_X)\cap\cont^{\geq 1}(I_W))$$
$$
=\codim\left(\psi_{Mm}^{-1}(\lm(Z))
\cap \psi_{Mn}^{-1}(\ln(X))\cap \psi_{M0}^{-1}(W), \ \ {\mathcal L}_M(A) \right),$$
where $M=\max \{m,n\}$.
As $\dim A=N$, we have $\dim {\mathcal L}_M(A)=(M+1)N$ which yields the 
required equality.
\end{proof}

Now we define an exceptional divisor over $X$, which is a generalization of an exceptional
divisor for normal variety (Note that  if $X$ is normal, an exceptional divisor is  defined as a divisor over $X$ with the center of codimension $\geq 2$ on $X$.)
\begin{defn}
Let $E$ be a prime divisor over $X$.
Let $\varphi:Y\to X$ be a proper birational morphism such that $Y$ is normal and $E$ appears 
on $Y$.
Then $E$ is called an exceptional divisor over $X$ if $\varphi$ is not isomorphic at the generic point of $E$.
Here, we note that this definition is independent of the choice of $\varphi$.
\end{defn}

\begin{defn} \label{defmj}
   We call a pair $(X, \a^t)$ consisting of a connected reduced equidimensional scheme $X$ of finite type over $k$ and an ideal $\a\subset \ox$ with a    
   non negative real number $t$ is {\it MJ-canonical} (resp. {\it MJ-log canonical})
   if for every exceptional prime divisor $E$ over $X$, the inequality $\ha(E; X,\a^t)\geq 1$
   (resp. $\geq 0$) holds.
   
   We say that $(X, \a^t)$ is MJ-canonical (resp. MJ-log canonical) at a point $x\in X$, if 
   there is an open neighborhood $U\subset X$ of $x$ such that $(U, \a^t|_U)$ is
   MJ-canonical (resp. MJ-log canonical). 
   
   If $(X,\ox)$ is MJ-canonical (resp. MJ-log canonical), we say that $X$ is MJ-canonical
   (resp. MJ-log canonical), or $X$ has MJ-canonical (resp. MJ-log canonical) singularities.
   
      In the similar way, we can define {\it MJ-terminal} and {\it MJ-log terminal} by the conditions for all exceptional prime divisors.
      In addition, we say that $(X, \a^t)$ is {\it MJ-klt} if for every prime divisor $E$ over
      $X$, the  inequality $\ha(E; X,\a^t)> 0$ holds.
\end{defn}

%\begin{exmp} Let $X$ be a non singular variety and $\a\subset\ox$ the ideal defining a non singular
%divisor on $X$.
%Then, the pair $(X,\a)$ is MJ-canonical but not MJ-klt.
%\end{exmp}

\begin{defn}
  Let $(X, \a^t)$ be a pair consisting of $X$ and an ideal $\a\subset \ox$ with a    
   non negative real number $t$.
  Let $\varphi: Y\to X$ be a log resolution of $(X, \a\jx)$. Define a divisor $Z_{Y/X}$
  by $\oy(-Z_{Y/X})=\a\oy$.
  Then we can define the {\it Mather-Jacobian multiplier ideal} (or {\it MJ-multiplier ideal} for short) as follows:
   $$\hj(X, \a^t)=\varphi_*(\oy(\hk_{Y/X}-J_{Y/X}-[tZ_{Y/X}])),$$
   where $[  D ]$ is the round down of the real divisor $D$. 

\end{defn}

\begin{rem} At the stage of the definition, this multiplier ``ideal" is only a fractional
ideal for non normal $X$. But in \cite{eim} we proved that it is really an ideal of $\ox$ in general.
In \cite{eim}, the MJ-multiplier ideal is proved to have good properties which a ``multiplier ideal" is expected to have.

In \cite{eim} this multiplier ideal is called Mather multiplier ideal and denoted 
by $\widehat{\j}(X,\cdots)$.
On the other hand, in \cite{dd} MJ-canonical (resp. MJ-log canonical) are called 
J-canonical (resp. log J-canonical).
Here we think that it is more appropriate to call these notions with both M and J.
 \end{rem}
 
 \begin{rem}
 Fix a log resolution $Y\to X$  of $(X, \jx\a)$. 
 Then $(X, \a^t)$ is MJ-canonical (resp.
 MJ-log canonical, MJ-terminal, MJ-log terminal) if and only if $\ha(E; X,\a^t)\geq 1$ (resp. $\geq 0$, $>1$, $>0$) for all 
 exceptional prime divisor $E$ on $Y$. 
 Also $(X, \a^t)$ is MJ-klt if and only if $\ha(E; X,\a^t)> 0$ for every prime divisor $E$ on $Y$.
 This is proved by using the fact that 
 $$\hk_{Y'/X}-J_{Y'/X}=K_{Y'/Y}+\psi^*(K_{Y/X}-J_{Y/X})$$
for another resolution $Y'\to X$ factoring through $Y\to X$.
 \end{rem}
 \begin{rem}
 
 Assume that  $X$ is  normal and locally a complete intersection.
   Then by Remark \ref{note-on-a},
    MJ-canonical (resp. MJ-log canonical) are equivalent to canonical (resp. log canonical).
 For normal and $\bQ$-Gorenstein case, we have the following:  
 
 \end{rem}
 \begin{prop}\label{mj-and-usual}
 Let $X$ be a normal $\bQ$-Gorenstein variety, $\a\subset \ox$ an ideal and $t$ a non negative real number.
If $(X,\a^t)$ is MJ-canonical (resp. MJ-log canonical, MJ-terminal, MJ-log terminal, MJ-klt), then it is canonical (resp. log canonical, terminal, log terminal, klt) in the usual sense.
 \end{prop}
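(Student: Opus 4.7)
The plan is to reduce the proposition to the divisor inequality
$$\hk_{Y/X} - J_{Y/X} \;\leq\; K_{Y/X}$$
on any log resolution $\varphi : Y \to X$ of $(X,\j_X\a)$ that factors through the Nash blow-up. Granted this, for every prime divisor $E$ on $Y$ one has $\ha(E;X,\a^t) \leq a(E;X,\a^t)$, and all five implications in the statement follow at once by comparing the defining inequalities for MJ-canonical, MJ-log canonical, MJ-terminal, MJ-log terminal, and MJ-klt with the usual ones. Since any exceptional prime divisor $E$ over $X$ appears on some such $Y$, this suffices.

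To establish the divisor inequality, fix $r \in \bZ_{>0}$ with $rK_X$ Cartier, and let $\mathfrak{n} \subseteq \ox$ denote the Nash ideal of index $r$, characterized by the fact that the image of the canonical map $(\Omega^d_X)^{\otimes r} \to \omega_X^{[r]}$ equals $\mathfrak{n}\cdot\omega_X^{[r]}$. At the generic point of a divisor $E$ on $Y$ I compute the image of $\varphi^*((\Omega^d_X)^{\otimes r}) \to (\Omega^d_Y)^{\otimes r}$ in two ways. Directly, taking the $r$-th tensor power of the Mather-defining map yields image $\oy(-r\hk_{Y/X}) \cdot (\Omega^d_Y)^{\otimes r}$. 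Factoring through $\varphi^*(\omega_X^{[r]})$ instead, the image in this line bundle is $(\mathfrak{n}\oy) \cdot \varphi^*\omega_X^{[r]}$; pushing back to $(\Omega^d_Y)^{\otimes r}=\oy(rK_Y)$ via the canonical identification $\varphi^*\omega_X^{[r]} \cong \oy(rK_Y - rK_{Y/X})$ shifts the order at $E$ by $r\,\ord_E(K_{Y/X})$. Equating the two descriptions gives the divisor equality
$$r\,\hk_{Y/X} \;=\; r\,K_{Y/X} + N,$$
where $N$ is the effective divisor with $\mathfrak{n}\oy = \oy(-N)$.

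The remaining ingredient is the ideal inclusion $\j_X^r \subseteq \mathfrak{n}$, which via $\mathfrak{n}\oy \supseteq \j_X^r\oy$ yields $N \leq r\,J_{Y/X}$ and thus $\hk_{Y/X} \leq K_{Y/X} + J_{Y/X}$. In the lci case this is Remark~\ref{note-on-a}: the image of $\Omega^d_X \to \omega_X$ is exactly $\j_X\omega_X$. For general normal $\bQ$-Gorenstein $X$ one still has $\j_X\cdot\omega_X \subseteq \image(\Omega^d_X \to \omega_X)$ by realizing each $c\times c$ Jacobian minor of a local embedding $X \subset \bA^N$ (where $c = \codim_{\bA^N} X$) as the image of an associated basic $d$-form $du_{i_1}\wedge\cdots\wedge du_{i_d}$; tensoring this inclusion $r$-fold and pushing through the reflexive-hull map $\omega_X^{\otimes r} \to \omega_X^{[r]}$ delivers $\j_X^r\cdot\omega_X^{[r]} \subseteq \mathfrak{n}\cdot\omega_X^{[r]}$. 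The principal obstacle is to carry out this last step cleanly in the non-lci normal $\bQ$-Gorenstein setting, since one must verify that the $r$-fold tensor product remains well-behaved upon composing with the double-dualization $\omega_X^{\otimes r} \to \omega_X^{[r]}$.
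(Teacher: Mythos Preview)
Your overall strategy coincides with the paper's: reduce everything to the divisor inequality $\hk_{Y/X}-J_{Y/X}\le K_{Y/X}$, and derive this from the identity $r\hk_{Y/X}=rK_{Y/X}+N$, where $\oy(-N)=\mathfrak n\,\oy$ for the ideal $\mathfrak n$ with $\image\bigl((\Omega^d_X)^{\otimes r}\to\omega_X^{[r]}\bigr)=\mathfrak n\cdot\omega_X^{[r]}$ (the paper writes $I_r$ for your $\mathfrak n$). Up to this point the two arguments are identical.

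The gap is exactly the step you yourself flag as the ``principal obstacle'': obtaining $N\le rJ_{Y/X}$. Your proposed route---first show $\jx\cdot\omega_X\subseteq\image(\Omega^d_X\to\omega_X)$ by realizing each $c\times c$ minor as the image of a basic $d$-form, then tensor $r$-fold and pass to the reflexive hull---does not go through as written in the non-lci case. The difficulty is already present before any tensoring: when $\omega_X$ is not invertible there is no single local generator, so the fact that a basic $d$-form maps to a minor times \emph{one particular} section of $\omega_X$ (namely the residue generator attached to a chosen complete intersection $M\supset X$) does not yield $\Delta\cdot s\in\image$ for an arbitrary section $s$ of $\omega_X$. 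Thus the containment $\jx\cdot\omega_X\subseteq\image(\Omega^d_X\to\omega_X)$ is not established by that reasoning, and the subsequent passage to $\omega_X^{[r]}$ inherits the same problem.

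The paper closes the gap by a different, purely valuative argument that avoids any direct inclusion. Set $J_r:=\jx^{\,r}:I_r$ and invoke \cite[Corollary~9.4]{e-Mus2} to conclude that $J_rI_r$ and $\jx^{\,r}$ have the same integral closure. Writing $\oy(-Z_r)=I_r\oy$ and $\oy(-Z'_r)=J_r\oy$, this gives $rJ_{Y/X}=Z_r+Z'_r$, whence
\[
r\hk_{Y/X}-rJ_{Y/X}=r\hk_{Y/X}-Z_r-Z'_r=rK_{Y/X}-Z'_r\le rK_{Y/X},
\]
since $Z'_r\ge 0$. So one never needs an honest inclusion $\jx^{\,r}\subseteq I_r$, only the integral-closure identity; that is the missing input your argument would have to supply.
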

 
 \begin{proof} Let the index of $X$ be $r$, then the image of the canonical map
 $$(\wedge^d\Omega_X)^{\otimes r}\to \omega_X^{[r]}$$
 is written as $I_r \omega_X^{[r]}$ with an ideal $I_r$
since  $\omega_X^{[r]}$ is invertible.
Then, by the definition of the Mather discrepancy and the usual discrepancy,
we have $$I_r\oy(r\hk_{Y/X})=\oy(rK_{Y/X})$$
for a log resolution $Y\to X$ of $(X, \jx \a)$.
Let $J_r=\jx^r:I_r$, then ${J_r I_r}$ and $\jx^r$ have the same integral closures
by \cite[Corollary 9.4]{e-Mus2}. 
Therefore if we write $\oy(-Z_r)=I_r\oy $ and $\oy(-Z'_r)=J_r\oy$, then $rJ_{Y/X}=
Z_r+Z'_r$ and 
$$r\hk_{Y/X}-rJ_{Y/X}=r\hk_{Y/X}-Z_r-Z'_r=rK_{Y/X}-Z'_r\leq rK_{Y/X},$$
which gives our assertions.
  \end{proof}

\begin{prop}\label{atx}
\begin{enumerate}
  \item[(i)] A pair $(X, \a^t)$ is MJ-log canonical at a (not necessarily closed) point $x\in X$ 
  if and only if $$\hmld(x; X, \a^t)\geq 0.$$
  \item[(ii)] If a pair $(X, \a^t)$ is  MJ-canonical at a (not necessarily closed) point $x\in X$ then 
  $$\hmld(x; X, \a^t)\geq 1.$$
\end{enumerate}
\end{prop}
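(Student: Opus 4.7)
The plan is to unravel the definitions. MJ-log canonical (resp.\ MJ-canonical) at $x$ asserts that $\ha(E; X, \a^t) \geq 0$ (resp.\ $\geq 1$) for every exceptional prime divisor $E$ over some neighborhood $U$ of $x$; meanwhile $\hmld(x; X, \a^t)$ is the infimum of $\ha(E)$ over all prime divisors $E$ over $X$ whose center equals $\overline{\{x\}}$. I would compare the two via a case analysis on whether such $E$ is exceptional.

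For the forward direction of both (i) and (ii), fix $E$ with center $\overline{\{x\}}$ and a neighborhood $U$ witnessing the hypothesis. In the exceptional case, the restriction $E|_U$ is a nonempty exceptional prime divisor over $U$ (since $x$ lies in the center of $E$ and exceptionality is local at the generic point of $E$), and by the locality of $\ha$, $\ha(E; X, \a^t) = \ha(E|_U; U, \a^t|_U) \geq 0$ (resp.\ $\geq 1$). In the non-exceptional case, $\overline{\{x\}}$ must be a codim-$1$ prime divisor $D$ on which $X$ is regular at $\eta_D$; after shrinking $U$ so that $X$ is smooth on $U$, one has $E = \tilde D$, and $\hk_{Y/X} - J_{Y/X}$ has no component along $\tilde D$, yielding $\ha(\tilde D) = 1 - t\,\val_D(\a)$. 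Applying monomial valuations $\val_{a,1,\ldots,1}$ at a smooth closed point of $D \cap U$ gives exceptional divisors with $\ha = a(1 - t\,\val_D(\a)) + (d-1)$; MJ-log canonicity of $U$ forces this to be $\geq 0$ for all $a \geq 1$, hence $t\,\val_D(\a) \leq 1$ and $\ha(\tilde D) \geq 0$.

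For the backward direction of (i), I argue contrapositively. If $\hmld(x; X, \a^t) < 0$, then some prime divisor $E$ with center $\overline{\{x\}}$ has $\ha(E) < 0$. If $E$ is exceptional, its center contains $x$, so $E|_U$ obstructs MJ-log canonicity on every neighborhood $U$ of $x$. If $E = \tilde D$ is non-exceptional, then $t\,\val_D(\a) > 1$ and the same monomial-valuation construction produces exceptional divisors with $\ha < 0$ at closed points of $D$ lying in any neighborhood of $\eta_D$; either way, MJ-log canonicity at $x$ fails.

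The main obstacle I anticipate is the non-exceptional case in (ii): the monomial-valuation argument only forces $t\,\val_D(\a) \leq 1$, hence $\ha(\tilde D) \geq 0$, whereas (ii) demands $\ha(\tilde D) \geq 1$. Closing this gap cleanly would require a sharper divisorial argument specific to MJ-canonicity or an implicit convention restricting the infimum in $\hmld(x; \cdot)$ to exceptional divisors when no non-exceptional obstructions arise; I would verify the authors' intended reading before finalizing that step.
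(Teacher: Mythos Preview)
Your third paragraph does not prove the converse of (i). You assume $\hmld(x;X,\a^t)<0$ and deduce that $(X,\a^t)$ fails to be MJ-log canonical at $x$; but that is the contrapositive of the \emph{forward} implication, which you already established directly in the second paragraph. The genuine converse is: assuming $\ha(E;X,\a^t)\geq 0$ for every prime divisor $E$ with center exactly $\overline{\{x\}}$, exhibit an open neighborhood $U$ of $x$ on which every exceptional divisor has $\ha\geq 0$. The difficulty is that a bad exceptional divisor $E$ over $X$ may have center $C$ \emph{strictly containing} $\overline{\{x\}}$, and such $E$ never enters the infimum defining $\hmld(x;\cdot)$. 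The paper handles this via the standard propagation argument: fix a log resolution $\varphi:Y\to X$; if some exceptional $E_i$ on $Y$ with $x\in\varphi(E_i)$ has $\ha(E_i)<0$, then using
\[
\hk_{Y'/X}-J_{Y'/X}=K_{Y'/Y}+\psi^*\bigl(\hk_{Y/X}-J_{Y/X}\bigr)
\]
for further blow-ups $\psi:Y'\to Y$ along centers lying over $\overline{\{x\}}$ inside $E_i$, one produces divisors with center $\overline{\{x\}}$ and arbitrarily negative $\ha$ --- exactly as for usual log discrepancies. Your monomial-valuation construction does not supply this step.

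Your case analysis of exceptional versus non-exceptional divisors is more careful than the paper's treatment; the paper simply declares the forward implications ``clear by the definitions'' and says nothing about the codimension-one case. Your worry about (ii) in the non-exceptional case is well-founded and is not resolved by the paper either: for instance $X=\bA^2$, $\a=(x_1)$, $t=\tfrac12$, $x=\eta_D$ with $D=\{x_1=0\}$ gives a pair that is MJ-canonical everywhere, yet the only prime divisor with center $D$ is $D$ itself, contributing $\ha(D)=1-\tfrac12=\tfrac12<1$. So (ii) as stated fails at codimension-one points where $\a$ vanishes; the paper's one-line justification inherits the same subtlety present in the usual theory, and the statement should be read with $x$ of codimension $\geq 2$ (which is how it is used later).
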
  

\begin{proof} It is clear that if a pair $(X, \a^t)$ is MJ-log canonical (resp. MJ-canonical) 
at a point $x\in X$ then $\hmld(x; X, \a^t)\geq 0$
(resp. $\hmld(x; X, \a^t)\geq 1$) by the definitions.
For the proof of the converse statement in (i), we have only to note that 
$$\hk_{Y'/X}=K_{Y'/Y}+\varphi^*\hk_{Y/X}$$
for another resolution $Y'$ of $X$  that dominates $Y$ by $\varphi:Y'\to Y$.
The proof of the proposition is the same as the corresponding statement for the usual minimal log discrepancy.

\end{proof}

  The converse of the statement of (ii) in Proposition \ref{atx} does not hold.
  The following is an example for that. 
\begin{exmp}
  Let X be a hypersurface in $\bA^3$ defined by $x_1x_2=0$, where $x_1,x_2,x_3$ are the coordinates of $\bA^3$. Then the $x_3$-axis $C$ is the singular locus of $X$.
By the Inversion of Adjunction, we have $\hmld(C; X, \ox)=\mld (C; \bA^3, (x_1x_2))$, where the
  right hand side is known to be zero. Therefore $X$ is not MJ-canonical at the origin $0$.
  On the other hand, again by the Inversion of Adjunction,
  $\hmld(0;X, \ox)=\mld(0;  \bA^3, (x_1x_2))$, where the right hand side is known to be 1.
\end{exmp}
 
 In the definition \ref{defmj} of MJ-log canonical singularities, the conditions are for exceptional 
 prime divisors over $X$.
 But we can replace them by prime divisors over $X$.
 
 \begin{prop}
 A pair $(X, \a^t)$ is MJ-log canonical if and only if \\
 $\ha(E; X, \a^t)\geq 0$ holds for every
 prime divisor $E$ over $X$.
 \end{prop}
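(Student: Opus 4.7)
The ``if'' direction is immediate, since exceptional prime divisors form a subset of all prime divisors over $X$.

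For the converse, assume $(X,\a^t)$ is MJ-log canonical and let $E$ be a non-exceptional prime divisor over $X$. Choose a log resolution $\varphi:Y\to X$ of $\jx\a$ on which $E$ appears. Since $\varphi$ is an isomorphism at $\eta_E$, $X$ is regular at $D:=\varphi(E)$ generically, so $\ord_E(\hk_{Y/X})=0$ (the Mather discrepancy is supported on the exceptional locus of $\varphi$) and $\ord_E(J_{Y/X})=0$ (because $\jx$ is the unit ideal at smooth points of $X$). Writing $n:=\ord_E(Z_{Y/X})=\ord_D(\a)$, one has $\ha(E;X,\a^t)=1-tn$, so it suffices to show $tn\le 1$.

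Suppose for contradiction that $tn>1$. Pick a closed point $p\in E$ general enough to lie outside the supports of $\hk_{Y/X}$ and $J_{Y/X}$ and so that, locally at $p$, $Z_{Y/X}=nE$; such $p$ exists by the SNC hypothesis together with $\ord_E(\hk_{Y/X})=\ord_E(J_{Y/X})=\ord_E(Z_{Y/X}-nE)=0$. Choose local coordinates $(u_1,\dots,u_d)$ at $p$ with $E=(u_1)$, and for each integer $a\ge 1$ consider the monomial valuation $v_a$ given by $v_a(u_1)=a$, $v_a(u_2)=1$, $v_a(u_i)=0$ for $i\ge 3$. This divisorial valuation is realized by a prime exceptional divisor $F_a$ on some model $\psi:Y'_a\to Y$, and over the smooth variety $Y$ its log discrepancy is $a+1$, whence $\ord_{F_a}(K_{Y'_a/Y})=a$. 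Using $\hk_{Y'_a/X}=K_{Y'_a/Y}+\psi^*\hk_{Y/X}$, $J_{Y'_a/X}=\psi^*J_{Y/X}$, $Z_{Y'_a/X}=\psi^*Z_{Y/X}$, together with $v_a(\hk_{Y/X})=v_a(J_{Y/X})=0$ (these divisors avoid $p$) and $v_a(Z_{Y/X})=an$, we find
$$\ha(F_a;X,\a^t)=a+1-tan=1+a(1-tn),$$
which is negative for $a\gg 0$. The center of $F_a$ on $Y$ is the codimension-$2$ subscheme $V(u_1,u_2)$ near $p$, and since $\varphi$ is an isomorphism in a neighborhood of $p$, its image in $X$ also has codimension $2$, so $F_a$ is exceptional over $X$. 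This contradicts MJ-log canonicity, completing the proof.

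The main obstacle is arranging the local situation at $p$: one must choose $p\in E$ simultaneously avoiding the supports of $\hk_{Y/X}$, $J_{Y/X}$, and every component of $Z_{Y/X}$ other than $E$, so that the monomial-valuation computation produces exactly $1+a(1-tn)$. This is where one uses that $E$ is non-exceptional (forcing $\ord_E$ of the Mather and Jacobian discrepancies to vanish) together with the SNC property of the chosen log resolution.
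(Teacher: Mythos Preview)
Your proof is correct and follows essentially the same approach as the paper: the paper's proof is a two-line sketch that invokes the formula $\hk_{Y'/X}=K_{Y'/Y}+\psi^*\hk_{Y/X}$ and says ``the proof is the same as the corresponding statement for the usual log discrepancy,'' and you have written out precisely that standard argument in full. The only remark is that your claim ``$\varphi$ is an isomorphism in a neighborhood of $p$'' deserves one extra word of justification in the non-normal setting---it holds because the non-isomorphism locus of $\varphi$ is closed in $Y$ and does not contain $E$ (as $E$ is non-exceptional), so a general $p\in E$ avoids it.
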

 
 \begin{proof}
  The ``if" part of the proof is obvious. 
  For the converse, 
  we have only to note that 
$$\hk_{Y'/X}=K_{Y'/Y}+\varphi^*\hk_{Y/X}$$
for another resolution $Y'$ of $X$  that dominates $Y$ by $\varphi:Y'\to Y$.
The proof of the statement is the same as the corresponding statement for the usual log discrepancy.  
 \end{proof}

\section{Basic properties of the MJ-singularities}
In this section, we show some basic properties on MJ-singularities.

\begin{prop}[\cite{dd}, \cite{eim}]\label{cano=normal}
If $X$ is MJ-canonical, then it is normal and has rational singularities.
\end{prop}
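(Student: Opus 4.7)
The plan is to work locally on $X$ and use the Inversion of Adjunction (Proposition \ref{inversion}) to transfer MJ-canonicity to a canonicity condition for the pair $(A, I_X^c)$ on a smooth ambient $A$, and then invoke a characterization of rational singularities in terms of multiplier ideals of the defining ideal.

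First, since the statement is local on $X$, I would fix a closed point $x\in X$ and embed a neighborhood of $x$ in $X$ as a closed subscheme of codimension $c$ in a smooth affine variety $A$, with defining ideal $I_X\subset \o_A$. By Proposition \ref{atx}(ii), MJ-canonicity of $X$ at $x$ gives $\hmld(x; X, \ox)\geq 1$, and Inversion of Adjunction then yields
$$\mld(x; A, I_X^c)=\hmld(x; X, \ox)\geq 1.$$
Since $x$ was arbitrary in $X$, the pair $(A, I_X^c)$ is canonical along $X$; equivalently, on any log resolution $\Psi:A'\to A$ of $I_X$ with $I_X\o_{A'}=\o_{A'}(-Z)$, the divisor $K_{A'/A}-cZ$ is effective on every prime divisor of $A'$ whose center in $A$ is contained in $X$.

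Next, I would choose $\Psi$ so that $\Psi^{-1}(X)_{\operatorname{red}}$ has simple normal crossings and the strict transform $\tx\subset A'$ of $X$ is smooth; set $\varphi:=\Psi|_{\tx}:\tx\to X$, which is then a resolution of singularities of $X$. The task is reduced to showing $R^i\varphi_*\o_{\tx}=0$ for all $i>0$, since rational singularities are automatically Cohen-Macaulay and normal, giving both conclusions of the proposition at once. The approach is to apply Kawamata-Viehweg vanishing to a suitable twist of $\o_{A'}$ by the $\bQ$-divisor $K_{A'/A}-cZ$, which is effective over a neighborhood of $X$ by the canonicity inequality established above, and then to use adjunction along $\tx\subset A'$ together with a careful decomposition of $Z$ relative to the simple normal crossings divisor $\Psi^{-1}(X)_{\operatorname{red}}$. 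This is the adjoint-ideal computation carried out in \cite{eim} (and in a related form in \cite{dd}), whose output is precisely the desired vanishing.

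The hard part is this last step: in codimension $c\geq 2$ the subvariety $\tx$ is not a divisor in $A'$, so one cannot directly adjunct $\o_{A'}(K_{A'/A}-cZ)$ to $\tx$ the way one would for a hypersurface, and the classical Elkik-type argument (which handles the case $c=1$) must be replaced by a more delicate analysis. One must relate the divisorial datum $cZ$, supported on exceptional divisors of $\Psi$, to $\tx$ by refining $\Psi$ so that $Z$ decomposes compatibly with the components of $\Psi^{-1}(X)_{\operatorname{red}}$ containing $\tx$; this is the technical core of the argument in \cite{eim}, and it delivers the rationality and normality of $X$.
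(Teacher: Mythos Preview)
The paper itself gives no proof of this proposition: it is stated with attributions to \cite{dd} and \cite{eim} and left at that. Your sketch is essentially an outline of the argument in \cite{eim}, so in that sense it agrees with what the paper does, namely defer to those references.

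One point to tighten: you reduce the task to showing $R^i\varphi_*\o_{\tx}=0$ for $i>0$, on the grounds that ``rational singularities are automatically Cohen--Macaulay and normal.'' This is circular as stated, since the standard definition of rational singularities already presupposes normality (equivalently $\varphi_*\o_{\tx}=\ox$), and higher direct-image vanishing alone does not deliver it. Normality needs its own argument. One clean route, in the spirit of \cite{eim}: MJ-canonicity gives $\hk_{Y/X}-J_{Y/X}\geq 0$ on a log resolution $\varphi:Y\to X$, hence
$$\varphi_*\oy\ \subset\ \varphi_*\oy(\hk_{Y/X}-J_{Y/X})\ =\ \hj(X,\ox),$$
and the right-hand side is an honest ideal of $\ox$ by \cite{eim}; combined with the tautological inclusion $\ox\hookrightarrow\varphi_*\oy$ this forces $\varphi_*\oy=\ox$, i.e.\ $X$ is normal. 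With normality in hand, your outline for the higher vanishing (Kawamata--Viehweg plus the adjoint-ideal analysis of \cite{eim}, with the codimension-$c$ adjunction as the technical core) is the right shape, and you have correctly located where the work lies.
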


\begin{prop}[\cite{dd}] 
If $k=\bC$ and $X$ is MJ-log canonical, then $X$ has Du Bois singularities.
\end{prop}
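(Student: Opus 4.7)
The plan is to reduce, via the Inversion of Adjunction, to a statement about the log canonicity of the defining ideal on a smooth ambient variety, and then appeal to a Du Bois criterion in that setting.

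First, since the Du Bois property is local on $X$, we may embed $X$ locally as a closed subscheme of pure codimension $c$ in a non-singular variety $A$, with defining ideal $I_X\subset\o_A$. Apply Proposition \ref{inversion} with $\widetilde{\a}=\o_A$ (so $\a=\ox$): for every strictly proper closed subset $W\subset X$,
$$\hmld(W;X,\ox)=\mld(W;A,I_X^c).$$
By Proposition \ref{atx}(i) the MJ-log canonical hypothesis gives $\hmld(x;X,\ox)\geq 0$ at every non-generic point $x\in X$, hence $\mld(x;A,I_X^c)\geq 0$ for all such $x$. At a generic point of an irreducible component of $X$ the condition is automatic, as $V(I_X)$ is smooth there. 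Thus the pair $(A,I_X^c)$ is log canonical along $X$; in particular $\lct(A,I_X)=c$, the reverse inequality being the standard bound $\lct(A,I_X)\leq \codim_A V(I_X)$.

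Next, invoke the following theorem of Schwede (with contributions of Koll\'ar and Kov\'acs): a reduced closed subscheme $X$ of a smooth variety $A$ of pure codimension $c$ satisfying $\lct(A,I_X)=c$ has Du~Bois singularities. Combined with the previous step, this finishes the proof.

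The main obstacle is this last ingredient. For a self-contained argument, one picks an embedded log resolution $\pi\colon A'\to A$ of $(A,I_X)$ and sets $X'=\pi^{-1}(X)_{\mathrm{red}}$, an SNC divisor in $A'$. The log canonical hypothesis controls the coefficients of the components of $\pi^{-1}(X)$ in terms of the relative canonical divisor $K_{A'/A}$, and a Grauert--Riemenschneider--Kawamata--Viehweg vanishing argument then shows that the natural map $\o_X\to R\pi_*\o_{X'}$ is a quasi-isomorphism; this is precisely Schwede's simple characterization of the Du~Bois property. An alternative route, already embedded in the framework of this paper, is to use the MJ-multiplier ideal machinery: the log canonicity of $(A,I_X^c)$ implies that $\j(A,I_X^c)=\o_A$ near $X$, and one can trace this back through the adjunction exact sequence relating $\j(A,I_X^{c-1})$ and $\o_X$ to obtain the required splitting directly.
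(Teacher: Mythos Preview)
The paper does not give its own proof of this proposition: it is simply quoted from \cite{dd} and no argument is supplied here, so there is nothing in the paper to compare against directly.

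That said, your outline is essentially the strategy of \cite{dd}. The Inversion of Adjunction step is exactly right: MJ-log canonicity of $X$ is equivalent, via Proposition~\ref{inversion}, to log canonicity of the pair $(A,I_X^c)$ along $X$ on the smooth ambient $A$, and each irreducible component of $X$ is then a log canonical center of this pair. The one point to tighten is the attribution and precise form of the last ingredient. What you actually need is the theorem that a union of log canonical centers of a log canonical pair is Du~Bois; this is due to Koll\'ar and Kov\'acs (Schwede's ``simple characterization'' is a key input to their proof, but the statement you invoke is not literally in Schwede's paper). Phrasing it via $\lct(A,I_X)=c$ is equivalent, but the standard reference is to the Koll\'ar--Kov\'acs result on Du~Bois-ness of log canonical centers, which is exactly how \cite{dd} concludes. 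Your final paragraph sketching the vanishing argument through Schwede's criterion is a reasonable summary of how that theorem is proved; the ``alternative route'' through multiplier ideals you mention is less standard and would need more care to make precise.
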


We will see that the class of  Du Bois singularities is much wider than that of MJ-log canonical singularities (see Example \ref{duboiscurve}).

\begin{prop}\label{emb}%***(emb)
  Let $x\in X$ be a closed point.
  If $X$ is MJ-canonical at $x$, then the embedding dimension $\emb(X,x)\leq 2d-1$.
  If $X$ is MJ-log canonical at $x$, then the embedding dimension $\emb(X,x)\leq 2d$.
\end{prop}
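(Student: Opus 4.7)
The plan is to embed $X$ locally at $x$ into a smooth ambient space of the smallest possible dimension, apply Inversion of Adjunction to reduce to a computation on the smooth ambient, and then test against the exceptional divisor of the blow-up of $x$.

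More precisely, let $e = \emb(X,x)$ and choose a local embedding $X \hookrightarrow A$ at $x$ with $A$ smooth and $\dim_x A = e$; such an embedding exists by the very definition of the embedding dimension, and it has the key feature that $I_X \subset \mathfrak{m}_{A,x}^2$, since otherwise $\dim T_x X < e$, contradicting minimality. The codimension is then $c = e - d$. Apply Proposition \ref{inversion} with $\widetilde{\mathfrak{a}} = \mathcal{O}_A$ to obtain
$$\hmld(x;X,\ox) = \mld(x; A, I_X^c).$$

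Next, let $\pi : A' \to A$ be the blow-up of $A$ at $x$, with exceptional divisor $E$. Then $\ord_E(K_{A'/A}) = e-1$, and because $I_X \subset \mathfrak{m}_{A,x}^2$ we have $\val_E(I_X) \geq 2$. Therefore, taking $E$ as one particular divisor with center $\{x\}$,
$$\mld(x; A, I_X^c) \;\leq\; a(E; A, I_X^c) \;=\; (e-1) - c\cdot\val_E(I_X) + 1 \;\leq\; e - 2c \;=\; 2d - e.$$

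Finally, invoke Proposition \ref{atx}: if $X$ is MJ-canonical at $x$ then $\hmld(x;X,\ox) \geq 1$, giving $2d - e \geq 1$, i.e., $e \leq 2d - 1$; if $X$ is MJ-log canonical at $x$ then $\hmld(x;X,\ox) \geq 0$, giving $e \leq 2d$. The only mildly delicate point is arranging $I_X \subset \mathfrak{m}_{A,x}^2$, which is handled by choosing the embedding to be minimal at $x$; beyond that, the argument is a direct computation with a single well-chosen divisor and poses no real obstacle.
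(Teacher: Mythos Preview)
Your argument is correct. The paper takes a different but closely related route: it invokes the jet-scheme formula (\ref{hmld}) from Proposition~\ref{description},
\[
\hmld(x; X, \ox) = \inf_{n \geq 0}\bigl\{(n+1)d - \dim(\psi^X_{n0})^{-1}(x)\bigr\},
\]
and specializes to $n=1$, where $(\psi^X_{1,0})^{-1}(x)$ is the Zariski tangent space $T_{X,x}$, so its dimension equals $\emb(X,x)$. The hypothesis $\hmld(x;X,\ox)\geq 1$ (resp.\ $\geq 0$) then yields $\emb(X,x)\leq 2d-1$ (resp.\ $\leq 2d$) directly.

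The two arguments are essentially the same computation in different languages: your test against the exceptional divisor $E$ of the blow-up of $A$ at $x$ corresponds, via the dictionary between divisorial valuations and contact loci, precisely to the $n=1$ term in the jet-scheme infimum (the condition $I_X\subset\mathfrak m_{A,x}^2$ is what makes $\val_E(I_X)\geq 2$, matching $\dim(\psi^X_{1,0})^{-1}(x)=e$). Your version has the merit of being self-contained once Inversion of Adjunction is in hand, with no explicit jet-scheme machinery; the paper's version is shorter given that (\ref{hmld}) has already been established, and is intrinsic in that it avoids choosing an embedding.
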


\begin{proof}
  By (\ref{hmld}) in Proposition \ref{description}, with putting $W=\{x\}$ we have 
  $$\hmld(x; X, \ox)=\inf_{n\in \bZ_{\geq 0}}\{(n+1)d-\dim(\psi^X_{n0})^{-1}(x)\}.$$
  If $X$ is MJ-canonical at $x$, then $\hmld(x; X, \ox)\geq 1$ and this implies that
  $\dim(\psi^X_{n0})^{-1}(x)\leq (n+1)d-1$ holds for every $n\in \bN$.
  Therefore, in particular for $n=1$, we have 
  $$\dim (T_{X,x})=\dim( \psi^X_{1,0})^{-1}(x)\leq 2d-1,$$
  where $T_{X,x}$ is the Zariski tangent space of $X$ at $x$. Hence the embedding dimension of $X$ at $x$ is $2d-1$.
  The proof for the statement on MJ-log canonical  singularities follows in the same way.
\end{proof}

\begin{defn}
  Let $X$ be embedded in a non singular variety $A$ and $I_X$ the defining ideal of $X$
  in $A$.
  Let $\Phi:\overline{A} \to A$ be a proper birational morphism which is isomorphic on the 
  generic point of each irreducible component of $X$.
  Let $\olx$ be the strict transform of $X$ in $\ola$ and $I_\olx$ be the defining ideal of 
  $\olx$ in $\ola$.
  Then, we call $\Phi$ a factorizing resolution of $X$ in $A$ if the following hold:
 \begin{enumerate}
  \item[(i)] $\Phi$ is an embedded resolution of $X$ in $A$;
  \item[(ii)] There is an effective divisor $R$ on $\ola$ such that
    $$I_X\ooa=I_\olx\ooa(-R).$$
 \end{enumerate}
\end{defn}

The existence of factorizing resolution of a given embedding $X\subset A$ is proved by 
A. Bravo and O. Villamayor (\cite{bv}) and  E. Eisenstein obtained in \cite{ee} a modified version 
which can be applied to the case of log resolutions. 
The following is an easy corollary of   \cite[Lemma 3.1]{ee}.

\begin{prop}\label{eefact}%***(eefact)
Let $X\subset A$ be a closed embedding  into a non singular variety $A$ and let $\a$ and $\b$ be
 ideals of $\ox$ and $\o_A$, respectively. Let $\widetilde\a$ be an ideal such that 
 $\tilde \a\ox=\a$.
 Assume that $\a$ and $\b$ are not zero on the generic point of each irreducible component of $X$.
Then, there exists a factorizing resolution
$\Phi:\ola\to A$ of $X$ in $A$ such that $\Phi$ is a log resolution of $(A, \widetilde\a\b)$ and  the restriction $\Phi|_\olx$ of $\Phi$ onto the strict transform 
$\olx$ is  a log resolution of $(X, \a)$.

\end{prop}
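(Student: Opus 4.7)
The plan is to apply \cite[Lemma 3.1]{ee} to the single combined ideal $\widetilde\a\cdot\b \subset \o_A$ and then check that the resulting morphism satisfies every requirement in the proposition. The work splits into a hypothesis check, an application of Eisenstein's lemma, and a short observation about log resolutions of product ideals.

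First I would verify that Eisenstein's hypothesis holds. His factorizing resolution lemma needs an ideal on $A$ that does not vanish identically on any irreducible component of $X$. By assumption, $\a = \widetilde\a\ox$ is nonzero on each component of $X$, so $\widetilde\a$ does not vanish identically on any such component; by assumption $\b$ also does not vanish on any component of $X$. Hence the product $\widetilde\a\b$ does not vanish on any irreducible component of $X$, putting us in position to apply \cite[Lemma 3.1]{ee}.

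Next I would invoke Eisenstein's lemma to produce a proper birational morphism $\Phi:\ola\to A$ enjoying the following properties: (i) $\Phi$ is an isomorphism over the generic point of each irreducible component of $X$ and is an embedded resolution of $X$ in $A$; (ii) $\Phi$ is a log resolution of $(A,\widetilde\a\b)$; (iii) there exists an effective divisor $R$ on $\ola$ with $I_X\ooa = I_\olx\ooa(-R)$, so that $\Phi$ is factorizing; and (iv) the restriction $\Phi|_\olx$ is a log resolution of the pullback ideal $(\widetilde\a\b)\ox = \a\cdot(\b\ox)$. Properties (i), (ii), (iii) already give everything asked of $\Phi$ itself.

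To finish I would argue that (iv) implies that $\Phi|_\olx$ is in fact a log resolution of $(X,\a)$. The point is that once $(\a\cdot(\b\ox))\o_\olx$ is invertible and defines a divisor with SNC support together with the exceptional locus, both factors $\a\o_\olx$ and $(\b\ox)\o_\olx$ are separately invertible (since Eisenstein's construction proceeds by a sequence of blow-ups that principalize each ideal in the input data) and each defines a divisor contained in the same SNC union. In particular $\a\o_\olx$ is principal with SNC support, which is exactly the definition of $\Phi|_\olx$ being a log resolution of $(X,\a)$. The only genuine subtlety, and thus the ``main obstacle,'' is this last bookkeeping step; it amounts to recalling that in Eisenstein's formulation principalization of a product entails principalization of each factor, after which the proposition follows immediately.
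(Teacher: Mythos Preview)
Your approach is exactly what the paper intends: it gives no proof at all, merely saying the statement ``is an easy corollary of \cite[Lemma~3.1]{ee}.'' So the overall strategy of invoking Eisenstein's factorizing resolution lemma is precisely right.

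There is, however, a slip in your final bookkeeping step. You feed the \emph{single} product ideal $\widetilde\a\b$ into Eisenstein's construction, and then justify that $\a\o_\olx$ is separately invertible by writing ``since Eisenstein's construction proceeds by a sequence of blow-ups that principalize each ideal in the input data.'' But your input data was only the product, not the two factors, so this parenthetical does not apply as stated. Two clean fixes are available:
\begin{itemize}
\item Apply \cite[Lemma~3.1]{ee} to the finite \emph{list} of ideals $\widetilde\a$, $\b$ (and, if you like, $I_X$) rather than to their product. Eisenstein's statement is formulated for finitely many ideals and produces a single morphism principalizing each one; then $\a\o_\olx$ is invertible directly, with no further argument needed.
\item Alternatively, keep your single-ideal application and replace the parenthetical by the elementary algebraic fact that in a regular local ring, if a product $IJ$ of nonzero ideals is principal then so are $I$ and $J$. (Factor out $\gcd$'s to reduce to $\gcd(I)=\gcd(J)=1$; then $\gcd(IJ)=1$, forcing $IJ=R$ in the local ring, hence $I=J=R$.) Applying this at each point of the smooth $\olx$ shows $\a\o_\olx$ is invertible, and its support sits inside that of the product divisor, hence is SNC.
\end{itemize}
Either route closes the gap; the first is closer to what the paper's one-line citation has in mind.
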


We sometimes come across  the situation to compare the MJ-discrepancies of two
schemes  connected by  a proper birational morphism.
The following gives some information on that.

\begin{thm}\label{comparison}%***(comparison)
  Let $\varphi: X'\to X$ be a proper birational morphism which can be extended to 
  a proper birational morphism $\Phi: A'\to A$ of non singular varieties 
  such that  $X'\subset A'$, $X\subset A$  with codimension $c$ and $\Phi$ is isomorphic at the generic point 
  of each irreducible component of $X$.
  Let   $I_X$ and $I_{X'}$ be defining ideals of $X$ and $X'$ in $A$ and $A'$, respectively.
  
  If $I_{X'}{\frak b'}\subset I_X\oaa\subset I_{X'}\frak b$ holds for some ideals $\frak b$, $\frak b'$ in $\oaa$ that do not vanish on any irreducible component of $X'$, then 
  there exists an embedded resolution $\Psi:\ola\to A'$ of $X'$ in $A'$ such that the restriction
  $(\Phi\circ\Psi)|_\olx: \olx \to X$ is a log resolution of $(X, \a\jx)$ and satisfying:
  $$\hk_{\olx/{X'}}-J_{\olx/{X'}}-cR'\leq\hk_{\olx/{X}}-J_{\olx/{X}}-\Psi^*K_{A'/A}\leq \hk_{\olx/{X'}}-J_{\olx/{X'}}-cR,$$
where $R$ and $R'$ are  effective divisors on $\ola$ such that ${\frak b}\ooa=\ooa(-R)$ 
and ${\frak b'}\ooa=\ooa(-R')$.
\end{thm}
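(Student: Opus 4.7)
The plan is to compare the two Mather--Jacobian discrepancy divisors on a common smooth model $\ola$ which simultaneously serves as a factorizing log resolution of $X'\subset A'$ and of $X\subset A$, converting MJ-data on the singular schemes $X,X'$ into ordinary discrepancy data on the smooth ambients via the divisorial form of Inversion of Adjunction (Proposition \ref{inversion}). To produce $\ola$, apply Proposition \ref{eefact} to the embedding $X'\subset A'$, taking the ambient ideal large enough to resolve $\frak{b}\frak{b}'(I_X\oaa)\j_{X'}$ together with a lift of $\a$ to $A'$. This yields a factorizing log resolution $\Psi:\ola\to A'$ of $X'$ with strict transform $\olx$, and $\Psi|_\olx$ a log resolution of $(X',\a\j_{X'})$. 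Since $\Phi$ is birational and an isomorphism at the generic point of every component of $X$, $\olx$ is also the strict transform of $X$ under $\Phi\circ\Psi$; by additional blow-ups (which preserve the factorization for $X'$ after updating the factorizing divisor) one can arrange that $\Phi\circ\Psi:\ola\to A$ is a factorizing log resolution of $X$ in $A$ with $(\Phi\circ\Psi)|_\olx$ a log resolution of $(X,\a\jx)$.

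Introduce the effective divisors $R_X,R_{X'}$ on $\ola$ defined by $I_X\ooa=I_\olx\ooa(-R_X)$ and $I_{X'}\ooa=I_\olx\ooa(-R_{X'})$. The divisorial strengthening of Inversion of Adjunction for a factorizing log resolution of a codimension-$c$ embedding into a smooth variety---implicit in the arc-space proofs of \cite{dd},\cite{Ishii} together with the factorizing formalism of \cite{ee},\cite{eim}---supplies the identities
\[ \hk_{\olx/X}-J_{\olx/X}=(K_{\ola/A}-cR_X)\big|_\olx, \qquad \hk_{\olx/X'}-J_{\olx/X'}=(K_{\ola/A'}-cR_{X'})\big|_\olx. \]
Combining with $K_{\ola/A}=K_{\ola/A'}+\Psi^*K_{A'/A}$ yields
\[ \hk_{\olx/X}-J_{\olx/X}-\Psi^*K_{A'/A}-\bigl(\hk_{\olx/X'}-J_{\olx/X'}\bigr)=c(R_{X'}-R_X)\big|_\olx. \]

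Pull the hypothesis $I_{X'}\frak{b}'\subset I_X\oaa\subset I_{X'}\frak{b}$ up to $\ola$. Using $I_{X'}\ooa=I_\olx\ooa(-R_{X'})$, $\frak{b}\ooa=\ooa(-R)$ and $\frak{b}'\ooa=\ooa(-R')$ the chain of inclusions becomes
\[ I_\olx\ooa(-R_{X'}-R')\subset I_\olx\ooa(-R_X)\subset I_\olx\ooa(-R_{X'}-R), \]
and reading off orders of vanishing along each prime divisor $E$ of $\ola$ different from $\olx$ (where $I_\olx$ restricts to the unit ideal at the generic point) yields the divisor inequality $R\leq R_X-R_{X'}\leq R'$ on $\ola$. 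Multiplying by $-c$, restricting to $\olx$, and substituting into the identity above delivers the claimed two-sided inequality.

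The principal obstacle is the divisorial form of Inversion of Adjunction invoked in the second paragraph: Proposition \ref{inversion} as stated is only a numerical equality of MJ minimal log-discrepancies, and upgrading it to the divisor-level identity $\hk_{\olx/X}-J_{\olx/X}=(K_{\ola/A}-cR_X)|_\olx$ requires unpacking the arc-space argument and using crucially that the strict transform $\olx$ is smooth with $I_X\ooa=I_\olx\ooa(-R_X)$. The resolution-theoretic arrangement in the first paragraph---a single $\Psi$ factorizing both embeddings simultaneously---is a routine consequence of Proposition \ref{eefact}.
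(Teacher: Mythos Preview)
Your approach is correct and structurally close to the paper's, but the paper is more direct in one key respect. The paper isolates the identity you need as a separate Lemma~\ref{lemcom}: given only the sandwich $I_\olx\ooa(-R')\subset I_X\ooa\subset I_\olx\ooa(-R)$ (no factorization for $X$ required), it proves
\[
(K_{\ola/A}-cR')|_\olx \le \hk_{\olx/X}-J_{\olx/X} \le (K_{\ola/A}-cR)|_\olx
\]
directly, via Eisenstein's Jacobian--minor formula \cite[Lemma~4.3]{ee}. Then only one factorizing resolution is needed (for $X'\subset A'$, giving $I_{X'}\ooa=I_\olx\ooa(-G)$); the hypothesis immediately yields the sandwich $I_\olx\ooa(-G-R')\subset I_X\ooa\subset I_\olx\ooa(-G-R)$, and Lemma~\ref{lemcom} applied once in inequality form (for $X$) and once in equality form (for $X'$) finishes. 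By contrast you arrange a \emph{simultaneous} factorizing resolution for both embeddings, apply the equality case twice, and then extract the inequality $R\le R_X-R_{X'}\le R'$ from the ideal chain. This works, but the simultaneous factorization is extra bookkeeping the paper avoids.

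One correction on sourcing: the divisor-level identity $\hk_{\olx/X}-J_{\olx/X}=(K_{\ola/A}-cR_X)|_\olx$ is \emph{not} obtained by ``unpacking the arc-space argument'' behind Proposition~\ref{inversion}. The arc proofs of \cite{dd},\cite{Ishii} give only the numerical equality of minimal log discrepancies; the divisor identity (and its inequality form) is proved in the paper by the explicit Jacobian-minor computation of Lemma~\ref{lemcom}, using Eisenstein's factorizing formalism. So what you flag as the ``principal obstacle'' is exactly Lemma~\ref{lemcom}, and its proof is differential rather than motivic.
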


For the proof of the proposition, we need the following lemma:

\begin{lem}\label{lemcom} 
  Let $X$ be embedded into a non singular variety $A$ with codimension $c$, 
   $\Phi:\ola \to A$  a proper birational morphism of non singular varieties isomorphic at the generic points of the irreducible components of $X$ and $\olx$ the strict transform of $X$ in $\ola$.
   Denote the ideal of $X$ and $\olx$ by $I_X$ and $I_\olx$, respectively.
   Assume 
 \begin{equation} \label{inclusion} I_\olx\ooa(-R')\subset I_X\ooa\subset I_\olx\ooa(-R),
 \end{equation}
   for some effective divisors $R,R'$ on $\ola$ that do not 
contain any irreducible component of $\olx$ in their supports.
   Then, we have
 \begin{equation} \label{kekka} (K_{\ola/A}-cR')|_{\olx}\leq \hk_{\olx/X}-J_{\olx/X}\leq (K_{\ola/A}-cR')|_{\olx}.
 \end{equation}
 In particular, if $I_X\o_\ola=I_\olx\ooa(-R)$, then
 $$\hk_{\olx/X}-J_{\olx/X}= (K_{\ola/A}-cR)|_{\olx}.$$

\end{lem}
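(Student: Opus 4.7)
My plan is to first establish the ``in particular'' (factorizing) equality, and then reduce the general sandwich inequality (\ref{kekka}) to it by passing to a common further factorizing blow-up. Throughout I will use the assumption (implicit in the setup but essential for the assertion to make sense) that $\olx$ is smooth, as it is in every application of this lemma in the paper.

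\emph{Step 1 (the factorizing case).} Assume $I_X\ooa=I_\olx\ooa(-R)$, so that $\olx$ is smooth of codimension $c$ in the smooth variety $\ola$. Choose local coordinates $y_1,\dots,y_d,z_1,\dots,z_c$ on $\ola$ with $I_\olx=(z_1,\dots,z_c)$ and $R$ locally cut out by $h$. The factorizing relation forces each defining equation of $X$ to pull back as $\Phi^*f_i=h\cdot p_i$ with $p_i=\sum_j g_{ij}z_j\in I_\olx$, and since the $p_i$'s generate $I_\olx$ on $\ola$, the matrix $(g_{ij})|_\olx$ admits a left inverse on $\olx$, so its $c\times c$ minors generate the unit ideal there. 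Two matrices now encode the Mather--Jacobian data on $\olx$: the restricted ambient Jacobian $F_{ik}=(\partial f_i/\partial x_k)(\tilde x)|_\olx$, whose $c\times c$ minors generate $\jx\mathcal{O}_\olx$ because $\jx=\mathrm{Fitt}_d(\Omega_X)$; and the matrix $M_{lk}=(\partial\tilde x_k/\partial y_l)|_\olx$, whose $d\times d$ minors generate $\mathcal{O}_\olx(-\hk_{\olx/X})$ since this minor ideal equals the image of $\varphi^*\Omega_X^d\to\Omega_\olx^d$. The chain rule yields $F\cdot J|_\olx=[\,0\mid hG\,]$, where $J$ is the full Jacobian of $\Phi$ (with $\det J$ cutting out $K_{\ola/A}$) and $G=(g_{ij})|_\olx$. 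A direct Cauchy--Binet and Laplace expansion, relying on the unit-ideal property of the $c$-minors of $G$, then yields the fractional-ideal identity which is exactly the desired equation; this is the computation carried out in \cite{eim}.

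\emph{Steps 2 and 3 (the general case).} Invoke Proposition~\ref{eefact} to obtain $\Psi:\ola_1\to\ola$ that is simultaneously a factorizing embedded resolution of $\olx\subset\ola$ and makes $\Phi\circ\Psi$ a factorizing embedded resolution of $X\subset A$. Let $\olx_1$ be the common strict transform, smooth, and write
\[
I_X\mathcal{O}_{\ola_1}=I_{\olx_1}\mathcal{O}_{\ola_1}(-R_1),\qquad I_\olx\mathcal{O}_{\ola_1}=I_{\olx_1}\mathcal{O}_{\ola_1}(-R_2).
\]
Pulling back the hypothesis inclusions (\ref{inclusion}) to $\ola_1$ and substituting these factorizations gives $R_2+\Psi^*R\leq R_1\leq R_2+\Psi^*R'$. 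Applying Step~1 to both $\ola_1\to A$ and $\ola_1\to\ola$ and combining with $K_{\ola_1/A}=K_{\ola_1/\ola}+\Psi^*K_{\ola/A}$ and the pullback identity
\[
\hk_{\olx_1/X}-J_{\olx_1/X}=(\hk_{\olx_1/\olx}-J_{\olx_1/\olx})+(\Psi|_{\olx_1})^*(\hk_{\olx/X}-J_{\olx/X})
\]
(valid because $\olx$ is smooth, by the relation $\hk_{Y'/X}-J_{Y'/X}=K_{Y'/Y}+\psi^*(\hk_{Y/X}-J_{Y/X})$ noted repeatedly in the paper), one obtains $(\Psi|_{\olx_1})^*(\hk_{\olx/X}-J_{\olx/X})=(\Psi^*K_{\ola/A})|_{\olx_1}+c(R_2-R_1)|_{\olx_1}$. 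The bounds on $R_1$ translate into $-c\Psi^*R'\leq c(R_2-R_1)\leq -c\Psi^*R$, and since a birational morphism between smooth varieties preserves divisor inequalities (coefficients on strict transforms agreeing with those on the base), descending along $\Psi|_{\olx_1}$ yields (\ref{kekka}).

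The main technical obstacle is Step~1: one must reconcile the Mather ideal (a minor ideal of $M$, arising from differentials) with the Jacobian ideal (a Fitting ideal of $F$, arising from defining equations) via the chain-rule relation $F\cdot J=[\,0\mid hG\,]|_\olx$, and extract the clean quotient formula. Once the factorizing identity is established, the remainder of the argument is a purely divisor-theoretic bookkeeping exercise.
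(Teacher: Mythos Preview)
Your approach is correct but takes a genuinely different route from the paper's. The paper argues the general sandwich \emph{directly}: it invokes Eisenstein's formula \cite[Lemma~4.3]{ee},
\[
\o_\olx(-\hk_{\olx/X})\Bigl(\bigl[\tfrac{\partial(h_i\circ\Phi)}{\partial w_j}\bigr]_c\Bigr)\o_\olx \;=\; \o_\olx\bigl(-K_{\ola/A}|_\olx - J_{\olx/X}\bigr),
\]
and then uses the elementary monotonicity of $c$-minor ideals under ideal inclusion---if $(g_\alpha)\subset(f_\beta)$ then $[\partial g_\alpha/\partial w_j]_c|_Z\subset[\partial f_\beta/\partial w_j]_c|_Z$ for any $Z\subset V((f_\beta))$---together with the explicit computation $[\partial(gw_i)/\partial w_j]_c|_\olx=(g^c)$ (via $\partial(gw_i)/\partial w_j|_\olx=g\,\partial w_i/\partial w_j$) to sandwich the middle minor ideal between $({g'}^c)$ and $(g^c)$. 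This yields (\ref{kekka}) in one pass, with the factorizing equality falling out as the special case $R=R'$. Your Step~1 is essentially that same local computation specialized to the equality case; where you diverge is in handling the general inequality by passing to a further blow-up $\ola_1\to\ola$ and descending. That works, but note that it requires a \emph{simultaneous} factorizing resolution (for both $X\subset A$ via $\Phi\circ\Psi$ and $\olx\subset\ola$ via $\Psi$), which is slightly stronger than the literal statement of Proposition~\ref{eefact}; you should cite the full strength of \cite{ee} rather than Proposition~\ref{eefact} for this. The paper's route is shorter and avoids the auxiliary resolution entirely; yours is more modular, isolating the factorizing identity as the only analytic input and reducing the remainder to divisor bookkeeping.
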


\begin{proof} We use the notation in \cite{ee}. 
The  notation $\left[a_{ij}\right]_c$ means the ideal generated by
$c$-minors of the matrix $(a_{ij})$.
Now since the problem is local, it is sufficient to show  the statement at a neighborhood of a point $P\in\ola$.
Let $I_X$ be generated by $h_1,\ldots, h_m$ around $\Phi(P)$.
Let $(z_1,\ldots,z_N)$ be local coordinates of $A$ at $\Phi(P)$ and 
$(w_1,\ldots, w_d,w_{d+1},\ldots, w_N)$ local coordinates of $\ola$ at $P$ such that
$(w_1,\ldots, w_d)$ is local coordinates of $\olx$.
Then, by \cite[Lemma 4.3]{ee}, it follows:
\begin{equation}\label{eeformula}
\o_\olx(-\hk_{\olx/X})\left(\left[\frac{\partial(h_i\circ\Phi)}{\partial w_j}\right]_c\right)\o_\olx
=\ooa(-K_{\ola/A})\left(\left[\frac{\partial h_i}{\partial z_j}\right]_c\ooa\right)\o_\olx,
\end{equation}
where the right hand side coincides with 
$$\o_\olx(-K_{\ola/A}|_\olx-J_{\olx/X}).$$
Let $g$ and $g'$ be local generators of $\ooa(-R)$ and $\ooa(-R')$ at $P$, respectively.
As $I_\olx$ is generated by $w_{d+1},\ldots, w_N$, the condition of the lemma implies:
$$(g'w_{d+1},\ldots, g'w_N)\subset I_X\ooa=(h_1\circ \Phi,\ldots, h_m\circ \Phi)\subset
(gw_{d+1},\ldots, gw_N).$$
Then, we obtain:
\begin{equation}\label{3jacobian}
\left[\frac{\partial(g'w_i)}{\partial w_j}\right]_c|_\olx\subset
\left[\frac{\partial(h_i\circ \Phi)}{\partial w_j}\right]_c|_\olx\subset
\left[\frac{\partial(gw_i)}{\partial w_j}\right]_c|_\olx.
\end{equation}
Here, we used a general fact: If $I=(g_1,\ldots, g_n)\subset J=(f_1,\ldots, f_m)$ are 
ideals. Then for a closed subscheme $Z\subset Z(J)$, it holds that 
$$\left[\frac{\partial g_i}{\partial w_j}\right]_c|_Z\subset \left[\frac{\partial f_i }{\partial w_j}\right]_c|_Z.$$

Note that $\frac{\partial(gw_i)}{\partial w_j}|_\olx=\left(g\frac{\partial w_i}{\partial w_j}+w_i
\frac{\partial g}{\partial w_j}\right)|_\olx=g\frac{\partial w_i}{\partial w_j}$,
since $w_i=0$ on $\olx$ for $i=d+1,\ldots, N$.
Here, we obtain 
$$\left[\frac{\partial(gw_i)}{\partial w_j}\right]_c|_\olx=g^c|_\olx,$$
and similarly
$$\left[\frac{\partial(g'w_i)}{\partial w_j}\right]_c|_\olx=g'^c|_\olx.$$
Therefore, the inclusions of (\ref{3jacobian}) turn out to be
$$({g'}^c)|_\olx\subset \left[\frac{\partial(h_i\circ \Phi)}{\partial w_j}\right]_c|_\olx
\subset (g^c)|_\olx.$$
Substituting this into (\ref{eeformula}) we obtain
$$\o_\olx(-\hk_{\olx/X}-cR')\subset \o_\olx(-K_{\ola/A}-J_{\olx/X})\subset\o_\olx(-\hk_{\olx/X}-cR),$$
which proves the required inequalities.
\end{proof}

\noindent
{\it Proof of Theorem \ref{comparison}}. 
Applying Proposition \ref{eefact} to $X'\subset A'$, we obtain a factorizing resolution 
$\Psi:\ola\to A'$ of $X'$ in $A'$, such that it is a log resolution of 
$(A',\b\b'\widetilde\jx\widetilde\j_{X'})$, where $\widetilde\jx$ and $\widetilde\j_{X'}$ are
  ideals of $\o_{A'}$
such that $\widetilde\jx\o_{X'}=\jx\o_{X'}$ and $\widetilde\j_{X'}\o_{X'}=\j_{X'}$, respectively.
Let $\b\ooa=\ooa(-R)$ and  $\b'\ooa=\ooa(-R')$.
As $\Psi$ is a factorizing resolution of $X'$ in $A'$, there exists an effective divisor 
$G$ on $\ola$ such that
$$I_{X'}\ooa=I_{\olx}\ooa(-G).$$
By the assumption of the proposition,
we have
$$I_{X'}\ooa(-R')\subset I_X\ooa=(I_X\o_{A'})\ooa\subset I_{X'}\ooa(-R),$$
which yields
$$I_\olx\ooa(-G-R')\subset I_X\ooa\subset I_\olx\ooa(-G-R).$$
Now by Lemma \ref{lemcom}, we obtain
$$(K_{\ola/A}-cG-cR')|_{\olx}\leq \hk_{\olx/X}-J_{\olx/X}\leq (K_{\ola/A}-cG-cR')|_{\olx}.$$
By substituting $K_{\ola/A}=K_{\ola/{A'}}+\Psi^*K_{A'/A}$ and 
$(K_{\ola/{A'}}-cG)|_{\olx}= \hk_{\olx/{X'}}-J_{\olx/{X'}}$ which follows from the second statement 
of Lemma \ref{lemcom}, we conclude the inequalities:
 $$\hk_{\olx/{X'}}-J_{\olx/{X'}}-cR'\leq\hk_{\olx/{X}}-J_{\olx/{X}}-\Psi^*K_{A'/A}\leq \hk_{\olx/{X'}}-J_{\olx/{X'}}-cR.$$
$\Box$

\begin{rem}\label{linear} Let us make a comment about a condition of Theorem\ref{comparison}.
  Locally on $X$, every projective birational morphism $X'\to X$ can be extended to a projective 
  birational morphism $A'\to A$ of non singular varieties.
  This is proved as follows.
  We can assume that $X$ is embedded in $\bA^N$ and $X'\to X$ is a blow-up by an ideal $\mathcal I=({f_1},\ldots,{f_r})$ of $\ox$.  Extend the canonical surjective homomorphism  $k[ x_1,\ldots, x_N]\to \Gamma(X,\ox)$ to 
   a  homomorphism $k[ x_1,\ldots, x_N,y_1,\ldots, y_r]\to \Gamma(X, \ox)$ by $y_i\mapsto f_i$
  for $i=1,\ldots, r$.
  Let $X\subset \bA^{N+r}$ be the embedding corresponding to this homomorphism.
  Then the blow-up $\Phi:A'\to A$ by the ideal $(y_1,\ldots, y_r)$ gives the blow-up 
  by the ideal $\mathcal I$ on $X$. 
  Since the center of the blow-up $\Phi$ is non singular, $A'$ is also non singular.
  
  The most effective application of Theorem \ref{comparison} is for the case that 
  $X'\to X$ is the blow-up at a closed point.
\end{rem}

\begin{cor}\label{blowup}%***(blowup)
 Let $X\subset A$ be a closed embedding into a non singular variety $A$ with codimension $c$ and
$\a$ an ideal of $\ox$.
Let $\Phi:A'\to A$ be the blow-up of $A$ at a closed point $x\in X$ and $X'$ the strict 
transform of $X$.
Let $E$ be the exceptional divisor for $\Phi$ and non negative integers $a,b$ as 
$$I_{X'}\o_{A'}(-aE)\subset I_X\o_{A'}\subset I_{X'}\o_{A'}(-bE).$$
Then, there is a proper birational morphism $\Psi:\ola\to A'$ with the strict transform $\olx$
of $X$ in $\ola$ such that the restriction $\Phi\circ\Psi|_{\olx}:\olx\to X$ is a log resolution
of $(X,\a\jx)$ and $\Psi|_\olx:\olx\to X'$ is a log resolution of $(X', \j_{X'})$
satisfying 
$$\hk_{\olx/X'}-J_{\olx/X'}-(ac-c-d+1)\Psi^*E\leq \hk_{\olx/X}-J_{\olx/X}$$
$$\leq \hk_{\olx/X'}-J_{\olx/X'}-(bc-c-d+1)\Psi^*E.
$$
In particular if $I_{X'}\o_{A'}(-aE)=I_X\o_{A'}$, then we have
$$\hk_{\olx/X'}-J_{\olx/X'}-(ac-c-d+1)\Psi^*E= \hk_{\olx/X}-J_{\olx/X}.$$

\end{cor}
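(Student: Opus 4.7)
The plan is to deduce this corollary directly from Theorem~\ref{comparison} by specializing to the case where $\Phi$ is the blow-up at a closed point and computing the relevant divisors explicitly.

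First, I would apply Theorem~\ref{comparison} to the proper birational morphism $\Phi:A'\to A$ with the ideals $\frak{b}:=\oaa(-bE)$ and $\frak{b}':=\oaa(-aE)$ in $\oaa$. The hypothesis
$$I_{X'}\oaa(-aE)\subset I_X\oaa\subset I_{X'}\oaa(-bE)$$
reads exactly as $I_{X'}\frak{b}'\subset I_X\oaa\subset I_{X'}\frak{b}$. Since the exceptional divisor $E$ lies over the closed point $x$ and $\Phi$ is an isomorphism at the generic points of the irreducible components of $X$, neither $\frak{b}$ nor $\frak{b}'$ vanishes on any irreducible component of $X'$, so the hypotheses of the theorem are satisfied. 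It produces an embedded resolution $\Psi:\ola\to A'$ with the stated log-resolution properties and the sandwich
$$\hk_{\olx/{X'}}-J_{\olx/{X'}}-cR'\leq\hk_{\olx/{X}}-J_{\olx/{X}}-\Psi^*K_{A'/A}\leq \hk_{\olx/{X'}}-J_{\olx/{X'}}-cR,$$
where $R,R'$ are effective divisors on $\ola$ with $\frak{b}\ooa=\ooa(-R)$ and $\frak{b}'\ooa=\ooa(-R')$.

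Next, I would identify each of $R$, $R'$ and $\Psi^*K_{A'/A}$ explicitly. Since $A$ has dimension $N=d+c$ and $\Phi$ is the blow-up of a smooth point, the classical formula gives $K_{A'/A}=(d+c-1)E$, hence $\Psi^*K_{A'/A}=(d+c-1)\Psi^*E$. The locally principal ideals $\frak{b}=\oaa(-bE)$ and $\frak{b}'=\oaa(-aE)$ pull back under $\Psi$ to cut out the divisors $b\Psi^*E$ and $a\Psi^*E$ respectively, so $R=b\Psi^*E$ and $R'=a\Psi^*E$. Substituting these values into the sandwich and gathering the $\Psi^*E$ terms immediately yields
$$\hk_{\olx/X'}-J_{\olx/X'}-(ac-c-d+1)\Psi^*E\leq \hk_{\olx/X}-J_{\olx/X}\leq \hk_{\olx/X'}-J_{\olx/X'}-(bc-c-d+1)\Psi^*E,$$
which is the asserted inequality.

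For the final ``in particular'' statement, if $I_{X'}\oaa(-aE)=I_X\oaa$ then both inclusions become equalities with $a=b$, so the outer terms of the sandwich coincide and collapse to the stated equality. I do not expect a substantial obstacle: the argument is purely a translation of the general sandwich of Theorem~\ref{comparison} into the point-blow-up setting, together with the standard computation of the relative canonical divisor and the pullback of $\mathcal{O}_{A'}(-E)$.
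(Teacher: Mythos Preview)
Your proof is correct and follows exactly the same approach as the paper: apply Theorem~\ref{comparison} with $\frak b=\oaa(-bE)$, $\frak b'=\oaa(-aE)$ and use the standard formula $K_{A'/A}=(c+d-1)E$ for the blow-up of a smooth point. The paper's own proof is the one-line remark ``As $\dim X=d$, note that $K_{A'/A}=(c+d-1)E$ and apply Theorem~\ref{comparison},'' so your version simply spells out the substitution in more detail.
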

\begin{proof} As $\dim X=d$, note that $K_{A'/A}=(c+d-1)E$ and apply Theorem \ref{comparison}.
\end{proof}

\begin{exmp}\label{2dim} Let $(X,x)$ be a  singularity on a reduced 2-dimensional 
scheme $X$ and let 
 $\varphi: X'\to X$ be the blow-up at  $x$. 
 If $(X,x)$ is MJ-canonical (MJ-log canonical) singularity,
 then $X'$ has MJ-canonical (MJ-log canonical) singularities.

Here, if $(X,x)$ is non singular, then $X'$ is also non singular and the 
above statement is trivial, therefore we may assume
that $(X,x)$ is a singular point.
For the both statements of the example, it is sufficient to prove 
$$\hk_{\olx/X}-J_{\olx/X}
\leq \hk_{\olx/X'}-J_{\olx/X'}$$
for a log resolution $\Psi:\olx\to X'$ of $\j_{X'}\j_X{\mathcal O}_{X'}$.
As $(X,x)$ is singular, we have $c\geq 1$ and
$$I_X\o_{A'}\subset I_{X'}\o_{A'}(-2E),$$
under the notation of Corollary \ref{blowup}.
Let $b=2$ and note that $bc-c-d+1=c -1\geq 0$.
Then apply the corollary, we obtain the required inequality
$$\hk_{\olx/X}-J_{\olx/X}
\leq \hk_{\olx/X'}-J_{\olx/X'}.$$

\end{exmp}

\begin{exmp}\label{3fold}
Let $(X,x)$ be a singular point in a 3-dimensional reduced scheme.
Assume $(X,x)$ is not a hypersurface double point.
Let $X'$ be the same as in Example \ref{2dim}.
If $(X,x)$ is MJ-canonical (MJ-log canonical ), then $X'$ has MJ-canonical (MJ-log 
canonical ) singularities.

As in Example \ref{2dim}, it is sufficient to prove that  $bc-c-d+1\geq 0$.
If $(X,x)$ is not a hypersurface singularity, then $c\geq 2 $ and we can take 
$b=2$ and obtain $bc-c-d+1=c-2\geq0$.
If $(X,x)$ is a hypersurface singularity of  multiplicity $\geq 3$, then we can take 
$b\geq 3$, therefore  $bc-c-d+1\geq 2-3+1=0$.
\end{exmp}

\begin{exmp} Let $S\subset \bP^{N-1}$ be a $(d-1)$-dimensional
non singular projectively normal closed subvariety defined by
polynomials of common degree $a$.
Let $X\subset \bA^N$ be its affine cone.
Then,
\begin{enumerate}
\item[(i)] $X$ is MJ-canonical if and only if $a\leq \frac{N-1}{N-d}$,
\item[(ii)] $X$ is MJ-log canonical if and only if $a\leq \frac{N}{N-d}$
\end{enumerate}
Let us check the MJ-log canonicity and MJ-canonicity of $X$.
Let $\Phi:A'\to \bA^{N}$ be the blow-up at the origin, $E$ the exceptional divisor and
 $X'$ the strict transform of $X$ in $A'$.
Then, by the defining equations of $X$ in $\bA^{N}$, we have
$$I_X\o_{X'}=I_{X'}\o_{X'}(-aE).$$
By Corollary \ref{blowup}, we have
$\hk_{\olx/X'}-J_{\olx/X'}-(ac-N+1)\Psi^*E= \hk_{\olx/X}-J_{\olx/X},$
with $ c=N-d$ for an appropriate log resolution $\Psi:\ola\to A'$.
Therefore we obtain  
\begin{equation}\label{segre}
\hk_{\olx/X'}-J_{\olx/X'}-(a(N-d)-N+1)\Psi^*E= \hk_{\olx/X}-J_{\olx/X}.
\end{equation}
Here, we note that $(X', E|_{X'})$ is non singular pair and $(X', \alpha E|_{X'})$ is log 
MJ-canonical if and only if $\alpha\leq 1$.
Then by the  equality (\ref{segre}) we have $X$ is MJ-log canonical if and only if $a(N-d)-N+1\leq 1$
which is equivalent to $a\leq \frac{N}{N-d}$.
On the other hand, if $a(N-d)-N+1\leq 0$ which implies $a\leq \frac{N-1}{N-d}$, we have that 
$X$ is MJ-canonical by the equality (\ref{segre}).
If $a(N-d)-N+1= 1$, then the equality  (\ref{segre}) implies $\ha(E; X,\ox)=0$, which yields
that $X$ is not MJ-canonical.
\end{exmp}

\begin{exmp}
Under the same setting as in the previous example, let $a=2$.
Then,
\begin{enumerate}
\item[(i)] $X$ is MJ-canonical if and only if $N\leq 2d-1$,
\item[(ii)] $X$ is MJ-log canonical if and only if $N\leq 2d$.
\end{enumerate} 
Note that these conditions on $N$ and $d$ are only  the necessary conditions for a general 
$X$ to be MJ-canonical and MJ-log canonical as are seen 
in Proposition \ref{emb}.

We can see that the cones of many homogeneous spaces enjoy these conditions.
For example, the cones of $G(2,5)\subset \bP^9$, $E_6\subset
\bP^{26}$ (\cite{z}) and 10-dimensional Spinor variety in $\bP^{15}$ (\cite{e}) are all
MJ-canonical.

Let $S_{rm}=\bP^r\times\bP^m\hookrightarrow \bP^{N-1}$ be the Segre embedding,
{\it i.e.,} the correspondence of the homogeneous coordinates is $(x_i)\times(y_j)
\mapsto (x_iy_j)$. Then the subscheme $S_{rm}$ is defined in $\bP^N$ by the 
equations $z_{ij}z_{kl}-z_{il}z_{kj}=0, (i=0,\ldots, r, j=0,\ldots, m)$, where $z_{ij}$'s are
homogeneous coordinates of $\bP^{N-1}$  $(N=(r+1)(m+1))$.
Let $X_{rm}\subset \bA^{N}$ be the affine cone over $S_{rm}$.
Then, as $d=r+m+1$, we have the following:
\begin{enumerate}
\item[(i)]
 $X_{rm}$ is MJ-log canonical if an only if $(r-1)(m-1)\leq 2$, 
\item[(ii)] $X_{rm}$ is MJ-canonical 
if and only if $(r-1)(m-1)\leq 1$.
\end{enumerate}
In particular, $X_{1m}$ and $X_{r1}$ are all MJ-canonical.
Here, we note that $X_{rm}$ is $\bQ$-Gorenstein if and only if
$r= m$.
Thus, if $r\neq 1$ or $m\neq 1$, then $X_{1m}$ and $X_{r1}$ are examples of
MJ-canonical singularities which are not $\bQ$-Gorenstein.

\end{exmp}

\begin{exmp}
Three dimensional terminal quotient singularities are determined as $\frac{1}{r}(s,-s,1)$
$(0<s<r, \gcd(s,r)=1)$ by \cite{ms}.
If $s\neq 1, r-1$, then the singularity $\frac{1}{r}(s,-s,1)$ is not MJ-log canonical 
singularities.
Indeed, the singularity is at the origin of $X=\spec k[x^r,y^r,z^r,xy,xz^{r-s},yz^s]=k[x_1,\ldots, x_6]/I$,
where $I=(x_3x_4-x_5x_6, x_1x_2-x_4^r, x_1x_3^{r-s}-x_5^r, x_2x_3^s-x_6^r)$.
Here, we note that the number of generators with order 2 is two.

Assume that $X$ has MJ-log canonical singularity at 0, then $\hmld(0, X, \ox)\geq 0$, therefore by the formula (\ref{hmld}) in Proposition \ref{description} we have $$\dim(\psi^X_{n0})^{-1}(0)\leq d(n+1)=3(n+1).$$
In particular for $n=2$, it follows $\dim(\psi^X_{2,0})^{-1}(0)\leq 9$.
Under the notation in \ref{field}, we have by Remark \ref{keisan}:
$$(\psi^X_{2,0})^{-1}(0)=\spec k[x_1^{(1)},\ldots, x_6^{(1)},x_1^{(2)},\ldots, x_6^{(2)}]/
(x_3^{(1)}x_4^{(1)}-x_5^{(1)}x_6^{(1)}, x_1^{(1)}x_2^{(1)})$$
whose dimension is greater than 9, a contradiction.
Therefore $X$ is not MJ-log canonical at 0.

\end{exmp}

\begin{rem}
  The MJ-discrepancy has good properties: Inversion of Adjunction on minimal log discrepancies,
  lower semi-continuity of MJ-minimal log discrepancies (\cite{dd}, \cite{Ishii}), ACC of MJ-log canonical
  thresholds (\cite{dd}).
  So, if every step in Minimal Model Program would preserves MJ-log canonicity, 
  we could prove MMP simply.
  But actually a divisorial contraction does not preserve MJ-log canonicity.
  Kawamata \cite{k} determined the divisorial contraction to a 3-dimensional terminal
  quotient singularity as a certain weighted blow-up. 
  By this we can prove that  every 3-dimensional terminal quotient singularity can be resolved 
  by the successive weighted blow-ups which are divisorial contractions.
  This gives a counter example to the expectation that MJ-log canonicity would be preserved under divisorial contractions. 
\end{rem}

\begin{prop}\label{slc}
  Assume $X$ is $S_2$ and $\bQ$-Gorenstein.
  If $(X, \a^t)$ is MJ-log canonical, then it is semi log canonical.
\end{prop}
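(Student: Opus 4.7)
The plan is to verify the defining conditions of semi log canonicity. Since $X$ is assumed $S_2$ and $\bQ$-Gorenstein, it remains to show (i) that $X$ has at worst ordinary nodes at every codimension-one point of $\sing(X)$, and (ii) that the pair $(X^\nu,\Theta+\a^t\o_{X^\nu})$ on the normalization $\pi:X^\nu\to X$ is log canonical in the usual sense, where $\Theta$ is the conductor divisor on $X^\nu$.

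For (i), I would reduce to the one-dimensional classification. Fix a codimension-one component $V$ of $\sing(X)$ and a general closed point $x\in V$. After embedding $X$ in a smooth ambient variety $A$, cut by $d-1$ sufficiently general hyperplanes through $x$ to produce a reduced curve $C\subset X$; by inversion of adjunction (Proposition~\ref{inversion}) applied before and after slicing, together with a standard Bertini-type argument to keep the slices general with respect to the relevant ideals, MJ-log canonicity of $(X,\a^t)$ at $x$ restricts to MJ-log canonicity of $C$ at $x$. By Proposition~\ref{1dim}, $C$ is then either smooth or an ordinary node at $x$; since this holds for general $x\in V$, the localization $\o_{X,\xi}$ at the generic point $\xi$ of $V$ is itself nodal.

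For (ii), let $\varphi:Y\to X$ be a log resolution of $(X,\jx\a)$ factoring through the normalization as $\varphi=\pi\circ\psi$, with $\psi:Y\to X^\nu$ also a log resolution of the pullback of all relevant ideals. The strategy is to compare, for any exceptional prime divisor $E\subset Y$, the MJ-log discrepancy with the usual log discrepancy on $(X^\nu,\Theta+\a^t\o_{X^\nu})$ by proving
$$\hk_{Y/X}-J_{Y/X}\ \le\ K_{Y/X^\nu}-\psi^*\Theta.$$
Subtracting $tZ_{Y/X}$ and adding $1$ to both sides gives $\ha(E;X,\a^t)\le a(E;X^\nu,\Theta+\a^t\o_{X^\nu})$, so MJ-log canonicity of $(X,\a^t)$ forces log canonicity of the boundary pair on $X^\nu$. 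To prove the inequality one exploits the codim-one nodal structure from (i), which yields the adjunction-type equality $\pi^*K_X=K_{X^\nu}+\Theta$ in the $\bQ$-Gorenstein sense, and then mimics the proof of Proposition~\ref{mj-and-usual}: the image of $\varphi^*\Omega^d_X\to\Omega^d_Y$ is compared with the pullback of a generator of $\omega_X^{[r]}$, and the Jacobian divisor $J_{Y/X}$ is shown to absorb precisely the discrepancy arising from the codimension-one nodal locus, namely the contribution of $\psi^*\Theta$.

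The main obstacle is establishing the displayed inequality $\hk_{Y/X}-J_{Y/X}\le K_{Y/X^\nu}-\psi^*\Theta$. In Proposition~\ref{mj-and-usual} the normality of $X$ was used crucially to avoid codimension-one contributions; here those contributions must be controlled explicitly and matched against $\Theta$. The matching should follow from a local calculation at a generic point of the nodal locus: in the normal form $\{xy=0\}$ the Jacobian ideal of $X$ and the conductor on the normalization can be computed directly, and one verifies that the contribution of the nodal codimension-one locus to $J_{Y/X}$ matches the contribution to $\psi^*\Theta$ modulo the terms absorbed by $\hk_{Y/X}$ versus $K_{Y/X^\nu}$.
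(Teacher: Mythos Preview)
For (ii) you are on the paper's track: using $\pi^*K_X=K_{X^\nu}+\Theta$, your displayed inequality is equivalent to $\hk_{Y/X}-J_{Y/X}\le K_{Y/X}$, i.e.\ exactly the content of Proposition~\ref{mj-and-usual}. The paper's point is that the proof of that proposition goes through verbatim for non-normal $\bQ$-Gorenstein $X$: the only place normality entered was the citation of \cite[Corollary~9.4]{e-Mus2}, whose proof does not in fact require it. So no separate local computation at the nodal locus is needed; the inequality $\ha(E;X,\a^t)\le a(E;X,\a^t)$ holds for every prime divisor $E$ by the same $I_r$, $J_r=\jx^r:I_r$ argument.

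For (i) the paper takes a different and more direct route. From the jet-scheme formula (\ref{hmld}) together with $\dim W=d-1$ one obtains $\dim(\psi^X_{m0})^{-1}(x)\le dm+1$ for a general closed point $x\in W$, hence $\hmld(x;X,\ox)\ge d-1$; then one invokes the classification in \cite{ir} of non-normal points with this minimal log discrepancy (normal crossing double or pinch, and the pinch locus has codimension two). Your slicing argument has a genuine subtlety: log canonicity of $(A,\widetilde\a^t I_X^c)$ is \emph{not} preserved under restriction to a general hyperplane through a pre-fixed point (for instance, restricting the lc pair $(\bA^2,\tfrac56\{y^2=x^3\})$ to any line through the origin yields a point of coefficient $\tfrac53>1$). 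The repair is to choose $H_1,\ldots,H_{d-1}$ first from basepoint-free linear systems on $A$, so that the usual Bertini theorem for lc pairs applies, and only then take $x$ to be one of the finitely many points of $V\cap H_1\cap\cdots\cap H_{d-1}$, which is automatically general in $V$. After that you still owe a line explaining why a nodal transverse curve slice at such $x$ forces the one-dimensional localization $\o_{X,\xi}$ itself to be nodal.
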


\begin{proof}
The definition of a semi log canonical singularity requires $S_2$ and $\bQ$-Gorenstein property.
The additional conditions for a semi log canonical singularity are (\cite{ko}): 
\begin{enumerate}
  \item[(i)] $X$ is non singular or has normal crossing double singularities in codimension one.
  \item[(ii)] Let $\nu: X_\nu\to X$ be the normalization, $\a_\nu$ the pull back of $\a$ on $X_\nu$ 
  and $D_\nu$ the divisor on $X_\nu$ defined by the conductor $(\ox:\nu_*(\o_{X_\nu}))$.
  Then, $(X_\nu, \a_\nu^t\o_{X_\nu}(-D_\nu))$ is log canonical in the usual sense.
\end{enumerate}
% If $(X, \a^t)$ is MJ-log canonical, then $X$ is MJ-log canonical.
Let $W$ be an irreducible component of singular locus of $X$ of codimension 1.
Then $\hmld(W; X,\ox)\geq 0$ implies $(\psi_{m0}^X)^{-1}(W)\leq d(m+1)$ by (\ref{hmld})
in Proposition \ref{description}.
As $\dim W=d-1$, for a general point $x\in W$ we have
$(\psi_{m0}^X)^{-1}(x)\leq dm+1$, then again by (\ref{hmld}) in Proposition \ref{description}, it follows 
$$\hmld(x; X,\ox)\geq d-1.$$
In this case, $\hmld(x; X,\ox)= d-1$ holds by \cite[Corollary 3.15]{Ishii}, \cite[Corollary 4.15]{dd}  and such $(X,x)$ is classified in \cite{ir} as to be normal crossing double or a pinch point when it is non normal.
As the pinch point locus is of codimension 2, we have the assertion (i).
The condition (ii) is equivalent to that the usual log discrepancy 
$a(E; X_\nu, \a_\nu^t\o_{X_\nu}(-D_\nu))\geq 0$
for every prime divisor $E$ over $X_\nu$.
As $\nu^*K_X\sim_{\bQ}K_{X_\nu}+D_\nu$, it is equivalent to $a(E, X, \a^t)\geq 0$.
By the same argument in the proof of Proposition \ref{mj-and-usual}, we obtain
$\ha(E, X, \a^t)\leq a(E, X, \a^t)$, which yields the assertion (ii).
Here, we note that in the proof of Proposition \ref{mj-and-usual} used \cite[Corollary 9.4]{e-Mus2}
which was stated under the condition that $X$ is normal.
But the proof of the corollary works also for non normal case.
\end{proof}

\begin{cor} Let $X$ be locally a complete intersection.
Then, $(X, \a^t)$ is MJ-log canonical if and only if it is semi log canonical.
\end{cor}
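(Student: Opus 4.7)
The forward direction is immediate: any locally complete intersection $X$ is Cohen--Macaulay (so $S_{2}$) and Gorenstein (so $\bQ$-Gorenstein of index one), and therefore Proposition~\ref{slc} applies directly.

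For the converse, my plan is to upgrade Remark~\ref{note-on-a} to the non-normal lci setting and then to translate the slc hypothesis through the normalization. Revisit the proof of Proposition~\ref{mj-and-usual} with $r=1$. Locally embedding $X=V(f_{1},\dots,f_{c})\subset A$ in a smooth ambient $A$, the conormal sequence $0\to I_{X}/I_{X}^{2}\to \Omega_{A}|_{X}\to \Omega_{X}\to 0$ together with the lci formula $\omega_{X}=\omega_{A}|_{X}\otimes\det(I_{X}/I_{X}^{2})^{-1}$ identifies the image of the natural map $\wedge^{d}\Omega_{X}\to \omega_{X}$ with $\jx\omega_{X}$: the coefficients, relative to a local generator of $\omega_{X}$, of the images of the wedges $dx_{i_{1}}\wedge\cdots\wedge dx_{i_{d}}$ are exactly the $c\times c$ minors of the Jacobian matrix $(\partial f_{i}/\partial x_{j})$, which by definition generate $\jx$, and normality plays no role in this calculation. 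Thus the ideal $I_{1}$ from the proof of Proposition~\ref{mj-and-usual} equals $\jx$, so $J_{1}=\jx:\jx=\ox$ and the correction divisor $Z'_{1}$ appearing there vanishes; on any log resolution $\varphi:Y\to X$ of $(X,\jx\a)$ we therefore obtain
$$\hk_{Y/X}-J_{Y/X}=K_{Y/X},\qquad K_{Y/X}:=K_{Y}-\varphi^{*}K_{X},$$
which is well-defined because $X$ is Gorenstein. In particular $\ha(E;X,\a^{t})=a(E;X,\a^{t})$ for every prime divisor $E$ over $X$.

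Next pass to the normalization $\nu:X_{\nu}\to X$, with conductor divisor $D_{\nu}$ on $X_{\nu}$. Gorenstein adjunction gives $K_{X_{\nu}}+D_{\nu}=\nu^{*}K_{X}$; factoring a log resolution as $Y\stackrel{\mu}{\to}X_{\nu}\stackrel{\nu}{\to}X$ this pulls back to $K_{Y/X}=K_{Y/X_{\nu}}-\mu^{*}D_{\nu}$, and hence
$$a(E;X,\a^{t})=a\bigl(E;\,X_{\nu},\,\a_{\nu}^{t}\,\o_{X_{\nu}}(-D_{\nu})\bigr)$$
for every prime divisor $E$ over $X$; since $\nu$ is finite and birational on each irreducible component, each such $E$ is also a prime divisor over $X_{\nu}$. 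The hypothesis that $(X,\a^{t})$ is semi log canonical is precisely (part~(ii) of the characterization recalled in the proof of Proposition~\ref{slc}) the statement that the right-hand side above is $\geq 0$ for every such $E$. Combined with $\ha=a$, this gives $\ha(E;X,\a^{t})\geq 0$ for every prime divisor $E$ over $X$ (in particular for every exceptional one), so $(X,\a^{t})$ is MJ-log canonical by Definition~\ref{defmj}.

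The one subtle point I expect to require extra care is the identification $I_{1}=\jx$ in the non-normal lci case, since Remark~\ref{note-on-a} records the equality $\ha=a$ only under a normality hypothesis. The argument sketched above is purely local and uses only the conormal sequence and the standard description of $\omega_{X}$ for an lci, neither of which needs normality; this parallels the observation made in the proof of Proposition~\ref{slc} that \cite[Corollary~9.4]{e-Mus2} continues to hold without a normality assumption.
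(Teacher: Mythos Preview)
Your proof is correct and follows essentially the same approach as the paper. The paper's proof is terser: it refers back to the proof of Proposition~\ref{slc}, where the normalization adjunction $\nu^{*}K_{X}\sim_{\bQ}K_{X_{\nu}}+D_{\nu}$ and the comparison $\ha\leq a$ (becoming an equality in the lci case, exactly as you argue via $I_{1}=\jx$) are already established; you have simply unpacked those ingredients explicitly.
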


\begin{proof}
As  $X$ is locally a complete intersection, it is $S_2$.
Then, by Proposition \ref{slc}, 
if $(X, \a^t)$ is MJ-log canonical, it is semi log canonical.
Conversely, if $(X,\a^t)$ is semi log canonical, then by the condition (ii) of semi log canonical in the proof of Proposition \ref{slc}, we obtain 
$$\ha(E, X, \a^t)= a(E, X, \a^t)\geq 0$$ 
for every prime divisor $E$ over $X$ 
in the same way as in the proof above.
This yields the required equivalence.

\end{proof}

Here we note that the $S_2$ condition is necessary for a MJ-log canonical singularity 
to be semi log canonical.
Actually there is an example of MJ-log canonical singularity which does not satisfy
$S_2$ condition (see Example \ref{nonS2}).
%%%%
%[dFH] ??
%MJ lc center ???????'???????????????????}????

De Fernex and Hacon introduced in \cite{dfh} the notions log canonical, log terminal singularities on an arbitrary normal variety. 
These are direct generalizations of usual log canonical, klt singularities for
$\bQ$-Gorenstein case. 
Actually they defined that $(X, \a^t)$ is log terminal (resp. log canonical) if 
there is $m\in \bN$ such that 
$$a_m(F; X,\a^t):=\ord_F(K_{m,Y/X})-t\val_F(\a)+1>0
\ \ (resp. \geq 0)$$
 for every prime divisor $F$ over $X$.
Here, in a local situation, as we can take an effective divisor $mK_X$, we can think a divisorial sheaf $\ox(-mK_X)$ as an ideal sheaf. 
Let  $Y\to X$ be a log resolution of an ideal $\ox(-mK_X)$ and define the effective divisor $D_m$
on $Y$ by $\ox(-mK_X)\oy=
\oy(-D_m)$.
Note that an arbitrary prime divisor $F$ over $X$ can appear on such a resolution $Y$. 
Under this notation we define the divisor 
$$K_{m, Y/X}=K_Y-\frac{1}{m}D_m$$
with the support on the exceptional divisor.
The following is the relation of this divisor and our MJ-discrepancy divisor.

\begin{lem}\label{lemdfh} Let $X$ be an affine normal variety and $m$ a positive integer.
Then, there is a log resolution $Y\to X$ of $\jx\ox(-mK_X)$ such that
$$\hk_{Y/X}-J_{Y/X}\leq K_{m, Y/X}.$$
\end{lem}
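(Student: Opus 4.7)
The plan is to choose a log resolution $\varphi \colon Y \to X$ of $\jx \cdot \o_X(-mK_X)$ that factors through the Nash blow-up, and then to verify the stated divisor inequality as an inclusion of sheaves on $Y$. After multiplying by $m$ and rearranging, the inequality $\hk_{Y/X} - J_{Y/X} \leq K_Y - \tfrac{1}{m}D_m$ is equivalent to the inclusion
\begin{equation*}
\jx^m \o_Y \;\subseteq\; \o_Y(mK_Y - m\hk_{Y/X} - D_m)
\end{equation*}
of fractional ideals in the constant sheaf of rational functions on $Y$, and this is what I will establish.

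Working locally so that $mK_X$ is effective, $\o_X(-mK_X) \subseteq \o_X$ is an honest ideal sheaf and $\omega_X^{[m]} = \o_X(mK_X)$ is its divisorial dual. Consider the composition
\begin{equation*}
\mu \colon \o_X(-mK_X) \otimes (\wedge^d \Omega_X)^{\otimes m} \;\longrightarrow\; \o_X(-mK_X) \otimes \omega_X^{[m]} \;\longrightarrow\; \o_X,
\end{equation*}
where the first arrow is induced by the canonical map $(\wedge^d \Omega_X)^{\otimes m} \to \omega_X^{[m]}$ and the second is the natural pairing (i.e.\ multiplication of rational functions under the divisorial interpretation). Let $I_m \subseteq \o_X$ denote the image of $\mu$. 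Pulling the definition of $\mu$ back to $Y$, the image of $\varphi^*(\wedge^d \Omega_X)^{\otimes m}$ in $\omega_Y^{\otimes m} = \o_Y(mK_Y)$ is $\o_Y(mK_Y - m\hk_{Y/X})$ by the definition of the Mather discrepancy; the image of $\varphi^*\o_X(-mK_X)$ in $\o_Y$ is $\o_Y(-D_m)$ by the definition of $D_m$; and the pairing pulls back compatibly because on the smooth locus of $X$ (where it is an isomorphism) $\varphi^*\omega_X^{[m]} = \varphi^*\omega_X^{\otimes m}$ maps isomorphically to $\o_Y(mK_Y)$ through the differential pullback. Combining, we get $I_m \o_Y \subseteq \o_Y(mK_Y - m\hk_{Y/X}) \cdot \o_Y(-D_m) = \o_Y(mK_Y - m\hk_{Y/X} - D_m)$.

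The desired inclusion therefore follows from the algebraic containment $\jx^m \subseteq I_m$ on $X$. In the $\bQ$-Gorenstein case of index dividing $m$, this reduces to the identity $\overline{(\jx^m : I'_m) \cdot I'_m} = \overline{\jx^m}$ of \cite[Cor.~9.4]{e-Mus2} used in the proof of Proposition~\ref{mj-and-usual}, where $I'_m \subseteq \o_X$ is the ideal obtained because $\omega_X^{[m]}$ is invertible: the valuation identity $v(I'_m) + v(\jx^m : I'_m) = v(\jx^m)$ yields $v(I'_m \o_Y) \leq v(\jx^m \o_Y)$ and hence $\jx^m \o_Y \subseteq I'_m \o_Y$ (which in that setting agrees with $I_m \o_Y$). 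For general normal $X$ the same conclusion should be obtained by a parallel Jacobian-matrix calculation: locally embed $X \hookrightarrow A$ into a smooth ambient variety of dimension $N = d + c$, use the description of $\jx$ as $\mathrm{Fitt}_d(\Omega^1_X)$ in terms of $c \times c$ minors of the Jacobian matrix of defining equations, and track how these minors pair against local sections of $\o_X(-mK_X)$ through the map $(\wedge^d \Omega_X)^{\otimes m} \to \omega_X^{[m]}$ to produce elements of $I_m$.

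The hardest step is precisely this last one: establishing $\jx^m \subseteq I_m$ without the invertibility of $\omega_X^{[m]}$, since one cannot extract the auxiliary ideal $I'_m$ from the image of $(\wedge^d \Omega_X)^{\otimes m} \to \omega_X^{[m]}$ as in the proof of Proposition~\ref{mj-and-usual}. I expect the argument to require a careful local computation via a smooth embedding $X \hookrightarrow A$ of codimension $c$ together with the Grothendieck-duality description $\omega_X^{[m]} \simeq \bigl(\mathrm{Ext}^c_{\o_A}(\o_X, \wedge^N \Omega_A)\bigr)^{[m]}$, pairing the Jacobian minors generating $\jx$ against regular differentials on $A$ to produce the sections of $I_m$ witnessing the inclusion.
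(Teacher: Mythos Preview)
Your reformulation of the inequality as the inclusion $\jx^m\o_Y\subseteq \o_Y(mK_Y-m\hk_{Y/X}-D_m)$ is correct, and reducing this to $\jx^m\o_Y\subseteq I_m\o_Y$ is a reasonable strategy. The problem is that you do not prove this last inclusion: you say only that you ``expect the argument to require'' a Jacobian computation via an embedding $X\hookrightarrow A$, and you leave it there. That is precisely the content of the lemma, so as written the proposal is not a proof. There is also a more concrete obstruction: on the initially chosen $Y$ the ideal $I_m\o_Y$ need not be invertible, and the valuative argument you sketch in the $\bQ$-Gorenstein case only yields $\jx^m\o_Y\subseteq\overline{I_m\o_Y}$, not containment in $I_m\o_Y$ itself. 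One should therefore anticipate having to pass to a further blow-up, which your setup does not accommodate.

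The paper's argument is organized differently and avoids the direct computation you propose. One chooses a general reduced complete intersection $M\subset\bA^N$ of codimension $c$ containing $X$ as a component and uses the identity of \cite[Proposition~9.1]{e-Mus2}: the image of $(\wedge^d\Omega_X)^{\otimes m}\to\omega_X^{[m]}\to(\omega_M|_X)^m$ is exactly $(\j_M|_X)^m(\omega_M|_X)^m$. After pulling back to $Y$ and factoring through $\o_Y(D_m)$ (this requires the auxiliary claim $\varphi_*\o_Y(D_m)=\o_X(mK_X)$), one obtains ideals $I,J_M\subset\o_Y$ with $IJ_M=(\j_M|_X)^m\o_Y$, where $I$ plays the role of your $I_m\o_Y$. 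Summing over all such $M$ and taking integral closures gives $\overline{IJ}=\overline{\jx^m\o_Y}$ with $J=\sum_M J_M$. One then passes to a \emph{further} log resolution $Y'\to Y$ of $IJ$, writes $I\o_{Y'}=\o_{Y'}(-B)$ and $J\o_{Y'}=\o_{Y'}(-C)$, so that $mJ_{Y'/X}=B+C$; the effectivity of $C$ then gives the inequality on $Y'$. The resolution in the statement of the lemma is this $Y'$, not the original $Y$.
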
 

\begin{proof}
Fix a log resolution $\varphi: Y \to X$ of $\j_X \ox(-mK_X)$.
Take a reduced complete intersection scheme $M\subset \bA^N$ of codimension $c$ such that $M$ 
contains $X$ as an irreducible component.
Then we have a sequence of homomorphisms of $\ox$-modules:
\begin{equation}\label{einmus}
(\wedge^d \Omega_X)^{\otimes m}\stackrel{\eta}\longrightarrow \omega_X^{[m]}
\stackrel{u}\longrightarrow (\omega_M|_X)^m.
\end{equation}
 By \cite[Proposition 9.1]{e-Mus2} the image of $u\circ \eta$ is written as 
 \begin{equation}\label{einmus1}
 (\j_M|_X)^m (\omega_M|_X)^m.
 \end{equation}
 Then take a pull-back of the sequence (\ref{einmus}):
\begin{equation}\label{einmus2}
\varphi^*(\wedge^d \Omega_X)^{\otimes m}\stackrel{\eta}\longrightarrow \varphi^*\omega_X^{[m]}
\stackrel{u}\longrightarrow \varphi^*(\omega_M|_X)^m.
\end{equation}
Define a divisor $D_m$ on $Y$ as $\oy(-D_m)=\ox(-mK_X)\oy$.

Then, we claim that  
\begin{equation}\label{=}
\varphi_*(\oy(D_m))=\ox(mK_X).
\end{equation}  
This is proved as follows:
As outside of the singular locus the both sheaves coincide and the right 
hand side
is reflecive, the inclusion  $\subset$ holds.
For the opposite inclusion, regard $\ox(mK_X)$ as $\ox(-mK_X)^*=\Hom_\ox(\ox(-mK_X), \ox)$.
For the claim, it is sufficient to prove that every homomorphism \\
$f: \ox(-mK_X) \to \ox$ 
comes from
a homomorphism  $\ox(-mK_X) \oy \to \oy$.
The homomorphism $f$  is lifted to $ f': \varphi^*( \ox(-mK_X)) \to \oy$.
Here, the torsion elements are mapped to zero by $f'$.
Therefore $f'$ factors through $\varphi^*( \ox(-mK_X))/Tor \to \oy$, where $Tor$ is the 
subsheaf consisting of the torsion elements of $\varphi^*( \ox(-mK_X))$.
But we can prove that $\varphi^*(\ox(-mK_X))/Tor =\oy(-D_m)$.
This completes the proof of the claim (\ref{=}).

By (\ref{=}), the 
sequence (\ref{einmus2}) factors as:
\begin{equation}\label{einmus3}
\varphi^*(\wedge^d \Omega_X)^{\otimes m}\stackrel{\eta'}\longrightarrow 
\oy(D_m)
\stackrel{u'}\longrightarrow  \varphi^*(\omega_M|_X)^m,
\end{equation}
where $u'$ is the dual map of the following:
$$\ox(-mK_X)\oy\leftarrow \ox(-mK_M|_X)\oy=( \varphi^*(\omega_M|_X)^m)^*.$$
As the second and the third sheaves in the sequence (\ref{einmus3}) are invertible,
we can write 
\begin{equation}\label{einmus7}
\im \eta'=I \oy(D_m)
\end{equation}
$$
\im u'=J_M  \varphi^*(\omega_M|_X)^m,
$$
with the ideals $I, J_M \subset \oy$.
Then, by (\ref{einmus1}), we obtain
\begin{equation}\label{einmus4}
I J_M=(\j_M|_X)^m\oy
\end{equation}
Consider all $M$ and define $J=\sum_M J_M$, then we have
$I J=(\sum_M\j_M^m)\oy$.
Then, by taking the integral closure of the both hand sides, we have
\begin{equation}\label{einmus5}
\overline{I J}=\overline{\j_X^m\oy}
\end{equation}

Now, given a prime divisor $F$ over $X$, it appears on a log resolution $\nu:Y'\to Y$ of 
$I J$.
Let $\psi: Y'\to X$ be the composite  $\nu\circ\varphi$.
Define effective divisors $B, C$ on $Y'$ such that $\oyy(-B)=I\oyy$ and $\oyy(-C)=J
\oyy$, then
\begin{equation}\label{einmus6}
B+C=mJ_{Y'/X}
\end{equation}
As $\psi$ factors through the Nash blow-up, the torsion free sheaf 
\newline 
$((\psi^*\wedge^d\Omega_X)/Tor)^{\otimes m}$
is invertible, therefore it is written as $\oyy(G)$ by a divisor $G$ on $Y'$.
Then, by the definition of $\hk_{Y'/X}$, we have $m\hk_{Y'/X}=mK_{Y'}-G$.
On the other hand, by (\ref{einmus7}) we have  $G=\nu^*D_m-B$      and by (\ref{einmus5}) we have
$$m\hk_{Y'/X}-mJ_{Y'/X}=mK_{Y'}-G-(B+C)=mK_{Y'}-\nu^*D_m-C\leq mK_{Y'}-\nu^*D_m
,$$
which completes the proof of the lemma.
\end{proof}

The following shows our MJ-klt and MJ-log canonical singularities become log terminal
and log canonical singularities in the sense of De Fernex and Hacon.

\begin{thm}\label{dFH} Assume that $X$ is normal. If a pair $(X, \a^t)$ is MJ-klt (resp. MJ-log canonical), then it is log terminal
(resp. log canonical) in the sense of De Fernex and Hacon.
\end{thm}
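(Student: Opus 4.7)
The plan is to derive this as an immediate consequence of Lemma \ref{lemdfh}. Fix any integer $m\geq 1$ and let $F$ be an arbitrary prime divisor over $X$. By Lemma \ref{lemdfh} there exists a log resolution $\varphi\colon Y\to X$ of $\jx\ox(-mK_X)$ satisfying
$\hk_{Y/X}-J_{Y/X}\leq K_{m,Y/X}$.
I would then pass to a common refinement $\nu\colon Y'\to Y$ which is simultaneously a log resolution of $\jx\,\a\,\ox(-mK_X)$ and on which the given divisor $F$ appears; this is always achievable by composing with further blow-ups.

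The key intermediate check is that the inequality persists on $Y'$. On the Mather-Jacobian side one has the standard formula $\hk_{Y'/X}-J_{Y'/X}=K_{Y'/Y}+\nu^*(\hk_{Y/X}-J_{Y/X})$ already used elsewhere in the paper. On the De Fernex-Hacon side, because the ideal sheaf $\ox(-mK_X)\oy=\oy(-D_m)$ is already invertible on $Y$, its pullback satisfies $\ox(-mK_X)\oyy=\nu^{*}\oy(-D_m)$, so $D_{m,Y'}=\nu^*D_m$ and consequently $K_{m,Y'/X}=K_{Y'/Y}+\nu^*K_{m,Y/X}$. Both sides transform by the same relative canonical divisor $K_{Y'/Y}$, so pulling back the inequality from $Y$ and adding $K_{Y'/Y}$ yields $\hk_{Y'/X}-J_{Y'/X}\leq K_{m,Y'/X}$ on $Y'$.

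Taking $\ord_F$ of both sides and subtracting $t\val_F(\a)-1$, I obtain
$$\ha(F;X,\a^t)\leq a_m(F;X,\a^t).$$
The MJ-klt (resp. MJ-log canonical) hypothesis, which asserts $\ha(F;X,\a^t)>0$ (resp. $\geq 0$) for every prime divisor $F$ over $X$, then gives $a_m(F;X,\a^t)>0$ (resp. $\geq 0$) for every such $F$ with this single fixed $m$. This is exactly the De Fernex-Hacon log-terminal (resp. log-canonical) condition, so the theorem follows. The substantive content is packaged in Lemma \ref{lemdfh}, whose proof is the place where the interplay between $\varphi^*(\wedge^d\Omega_X)^{\otimes m}$, $\omega_X^{[m]}$ and the Jacobian of an ambient complete intersection was carefully unraveled; by contrast, the step outlined above is a short functorial calculation and is not expected to be the main obstacle.
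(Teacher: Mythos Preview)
Your proof is correct and follows essentially the same approach as the paper's own proof: invoke Lemma \ref{lemdfh}, pass to a further resolution on which the given divisor $F$ appears, observe that the inequality $\hk_{Y/X}-J_{Y/X}\leq K_{m,Y/X}$ persists under such refinements, and conclude. Your write-up is in fact more explicit than the paper's on the persistence step (the paper simply asserts that ``every resolution $\psi:Y'\to X$ factoring through $\varphi$ satisfies the inequality''), and you keep the ideal $\a^t$ in the statement throughout rather than reducing to $\a=\ox$; neither is a substantive difference.
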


\begin{proof} 
Since the problem is local, we may assume that $X$ is a closed subvariety of 
the affine space $\bA^N$ of codimension $c$.
It is sufficient to prove for a fixed $m\in \bN$
$$\ha(F; X,\ox)\leq a_m(F;X,\ox)$$
for every prime divisor $F$ over $X$.
As noted above, we may assume that $\ox(-mK_X)$ is an ideal sheaf of $\ox$.
By the lemma we have a log resolution $\varphi:Y\to X$ of $\jx\ox(-mK_X)$ 
such that the inequality 
$$\hk_{Y/X}-J_{Y/X}\leq K_{m, Y/X}$$
holds.
Then, note that  every  resolution $\psi:Y'\to X$ factoring through $\varphi$ satisfies the
inequality. 
Therefore, every prime divisor $F$ over $X$ appears on a resolution on which the
inequality holds,
which yields $\ha(F;X,\ox)\leq \a_m(F; X,\ox)$.
\end{proof}

By \cite[Theorem 1.2]{dfh} a pair $(X, \a^t)$ is log terminal (resp. log canonical) in De Fernex
and Hacon's sense if and only if there is a boundary $\Delta$ (it means that $\Delta$ is
a $\bQ$-divisor such that $[\Delta]=0$ and $K_X+\Delta$ is a $\bQ$-Cartier divisor)
such that $((X,\Delta), \a^t)$ is klt (resp. log canonical) in the usual sense.
Therefore we obtain the following corollary.

\begin{cor}\label{usual} If a pair $(X, \a^t)$ is MJ-klt (resp. MJ-log canonical), 
then there is a boundary $\Delta$ on $X$ such that $((X, \Delta), \a^t)$ is klt 
(resp. log canonical) in the usual sense.
\end{cor}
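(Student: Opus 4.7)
The plan is to deduce this statement in two short steps from results already at hand. The first step is to invoke Theorem \ref{dFH} in order to upgrade the hypothesis from MJ-klt (resp. MJ-log canonical) to the stronger-sounding assertion that $(X,\a^t)$ is log terminal (resp. log canonical) in the sense of De Fernex and Hacon. Note that Theorem \ref{dFH} is stated for normal $X$, so this corollary should be read as inheriting that assumption implicitly (which is harmless since, for example, MJ-canonicity already forces normality by Proposition \ref{cano=normal}; the MJ-log canonical case requires reading the normality hypothesis in).

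The second step is to apply \cite[Theorem 1.2]{dfh}, which characterizes De Fernex--Hacon log terminality (resp. log canonicity) of $(X,\a^t)$ on a normal variety as being equivalent to the existence of a boundary $\Delta$ on $X$, that is, an effective $\bQ$-divisor with $[\Delta]=0$ and $K_X+\Delta$ a $\bQ$-Cartier divisor, such that $((X,\Delta),\a^t)$ is klt (resp. log canonical) in the usual sense. The $\Delta$ produced by that equivalence is precisely the boundary demanded in our conclusion.

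Since the proof is essentially a two-line composition of Theorem \ref{dFH} with the cited theorem of De Fernex and Hacon, there is no substantive obstacle; the real content has already been expended in proving Theorem \ref{dFH} via Lemma \ref{lemdfh}. The only mild point of care is bookkeeping: one must verify that the ``klt versus log canonical'' dichotomy is preserved consistently through both invocations, and this is immediate from the fact that both Theorem \ref{dFH} and \cite[Theorem 1.2]{dfh} are formulated to respect this dichotomy simultaneously. Hence the corollary follows formally, with no further calculation needed.
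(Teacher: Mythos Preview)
Your proposal is correct and matches the paper's own argument exactly: the paper deduces the corollary by combining Theorem \ref{dFH} with \cite[Theorem 1.2]{dfh}, which is precisely your two-step composition. Your remark about the implicit normality hypothesis is a fair point of care, consistent with the paper's setup for Theorem \ref{dFH}.
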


In \cite{dfh}, De Fernex and Hacon also introduced a multiplier ideal for a pair 
$(X, \a^t)$ with a normal variety $X$ and an ideal $\a$ on $X$.
First for $m\in \bN$ they defined $m$-th ``multiplier ideal" as follows:
$$\j_m(X, \a^t)=\varphi_*(\oy(\ulcorner K_{m,Y/X}-tZ\urcorner)),$$
where $\varphi:Y\to X$ is a log resolution of $\a \ox(-mK_X)$
and let $\a\oy=\oy(-Z)$. They proved that the family of  ideals $\{\j_m(X,\a^t)\}_m$ has
the unique maximal element and call it the multiplier ideal of $(X, \a^t)$ and denote it
by $\j(X, \a^t)$.
The following is the relation between their multiplier ideal  and our MJ-multiplier ideal,
which follows immediately from Lemma \ref{lemdfh}

\begin{thm}\label{mjmulti} Let $(X,\a^t)$ be a pair with a normal variety $X$, an ideal $\a$ on $X$ and
$t\in \bR_{\geq 0}$. Then the following inclusion holds for every $m\in \bN$:
$$\hj(X,\a^t)\subset \j_m(X,\a^t),$$
in particular 
$$\hj(X,\a^t)\subset \j(X,\a^t).$$
\end{thm}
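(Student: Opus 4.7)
The plan is to reduce directly to Lemma \ref{lemdfh} by computing both ideals on a single, sufficiently fine log resolution. Fix $m\in \bN$. By Lemma \ref{lemdfh}, there is a log resolution $\varphi: Y\to X$ of $\jx\ox(-mK_X)$ satisfying
$$\hk_{Y/X}-J_{Y/X}\leq K_{m,Y/X}.$$
The first step is to observe that the same inequality continues to hold after any further blow-up $\psi: Y'\to Y$, since $\hk_{Y'/X}-J_{Y'/X}=K_{Y'/Y}+\psi^*(\hk_{Y/X}-J_{Y/X})$ and $K_{m,Y'/X}=K_{Y'/Y}+\psi^*K_{m,Y/X}$ (both Mather--Jacobian discrepancies and $K_{m,\cdot/X}$ transform in the same way under birational pullback). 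This is the observation already used in the proof of Theorem~\ref{dFH}. Hence we may replace $Y$ by a further blow-up and assume that $\varphi$ is simultaneously a log resolution of $\a\jx\ox(-mK_X)$; in particular $\a\oy=\oy(-Z)$ with $Z$ an SNC divisor.

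Next, evaluate both ideals on this common $\varphi$. Since the MJ-multiplier ideal is independent of the chosen log resolution (proved in \cite{eim}), and the De Fernex--Hacon $m$-th multiplier ideal is likewise computed on any log resolution of $\a\ox(-mK_X)$,
$$\hj(X,\a^t)=\varphi_{*}\oy\!\bigl(\hk_{Y/X}-J_{Y/X}-[tZ]\bigr),\quad \j_m(X,\a^t)=\varphi_{*}\oy\!\bigl(\ulcorner K_{m,Y/X}-tZ\urcorner\bigr).$$
Because $\hk_{Y/X}-J_{Y/X}$ is an integral divisor, we may rewrite the exponent on the left as $[\hk_{Y/X}-J_{Y/X}-tZ]$. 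Combining this with the key inequality yields
$$[\hk_{Y/X}-J_{Y/X}-tZ]\;\leq\;[K_{m,Y/X}-tZ]\;\leq\;\ulcorner K_{m,Y/X}-tZ\urcorner,$$
so $\oy(\hk_{Y/X}-J_{Y/X}-[tZ])\subset \oy(\ulcorner K_{m,Y/X}-tZ\urcorner)$, and pushing forward by $\varphi_{*}$ gives $\hj(X,\a^t)\subset \j_m(X,\a^t)$.

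For the ``in particular'' assertion, recall from \cite{dfh} that the family $\{\j_m(X,\a^t)\}_{m\in\bN}$ has a unique maximal element which is by definition $\j(X,\a^t)$, so if the inclusion holds for every $m$ it holds for the maximal one, giving $\hj(X,\a^t)\subset \j(X,\a^t)$. The only nontrivial step is the compatibility of Lemma~\ref{lemdfh} with further blow-ups needed to make $Y$ simultaneously a log resolution of $\a$; this is the main point to verify but, as noted, it follows from the standard transformation rules for both divisors $\hk_{Y/X}-J_{Y/X}$ and $K_{m,Y/X}$ under birational pullback (in Lemma~\ref{lemdfh}'s notation, the divisor $C$ remains effective after further blow-up, preserving the inequality). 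No delicate obstacle beyond this bookkeeping arises, consistent with the paper's remark that the theorem follows immediately from Lemma~\ref{lemdfh}.
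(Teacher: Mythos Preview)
Your proposal is correct and follows the same approach that the paper indicates: the paper simply states that the theorem ``follows immediately from Lemma \ref{lemdfh}'', and you have supplied exactly the details one would expect --- passing to a common log resolution of $\a\jx\ox(-mK_X)$, using the persistence of the inequality under further blow-ups (as in the proof of Theorem~\ref{dFH}), and then comparing the two pushforwards via the inequality $\hk_{Y/X}-J_{Y/X}\leq K_{m,Y/X}$. There is no substantive difference between your argument and the paper's intended one.
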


%%%%
The following proposition is an application of Inversion of Adjunction, where
the first result is contained in Corollary \ref{usual}, but we think that it 
makes sense to give a direct proof  without using the result
of \cite{dfh}. 

\begin{prop} 
\begin{enumerate}

\item[(i)]%Let $t$ be a non negative rational number.
Let $X$ be an MJ-canonical variety, then there exists an effective $\bQ$-divisor $\Delta$ on 
$X$ such that $(X, \Delta )$ is klt in the usual sense.
\item[(ii)]
Let $X$ be MJ-log canonical and $W$ be a minimal MJ-log canonical center,
then there exists an effective $\bQ$-divisor $\Delta$ on 
$W$ such that $(W, \Delta )$ is klt in the usual sense.
\end{enumerate}
\end{prop}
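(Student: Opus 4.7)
The plan is to prove both parts by embedding $X$ locally in a smooth ambient variety and applying Inversion of Adjunction (Proposition~\ref{inversion}), together with a complete-intersection/Bertini argument that replaces the ideal $I_X^c$ on $A$ by an honest effective divisor; this reduces everything to the standard theory of log pairs on a smooth variety, after which ordinary Gorenstein adjunction produces the boundary.

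For part (i), working locally I embed $X\subset A$ in a smooth affine variety of codimension $c$. Since $X$ is MJ-canonical, for every strictly proper closed subset $W\subsetneq X$ the inequality $\hmld(W;X,\ox)\ge 1$ holds, and by Inversion of Adjunction $\mld(W;A,I_X^c)\ge 1$. I then pick $c$ sufficiently general elements $g_1,\ldots,g_c$ of $I_X$ (after twisting by an ample line bundle, if needed, to force genericity). A standard Bertini-type argument ensures that $M:=V(g_1,\ldots,g_c)$ is reduced with $X$ appearing as a reduced component, say $M=X\cup X'$, and moreover for any divisor $E$ over $A$ whose center is strictly contained in $X$ one has $\ord_E(V(g_1)+\cdots+V(g_c))=c\cdot\ord_E(I_X)$, so that $(A,\sum V(g_i))$ has the same log discrepancies as $(A,I_X^c)$ at such $E$. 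In particular $(A,\sum V(g_i))$ is plt in a neighborhood of $X$, with $X$ itself as the unique LC center. Gorenstein adjunction along the complete intersection $M$, combined with the conductor construction for the reduced Gorenstein scheme $M=X\cup X'$, produces an effective Weil divisor $\Delta$ on $X$ (supported on $X\cap X'$) satisfying
\[
K_X+\Delta=\bigl(K_A+V(g_1)+\cdots+V(g_c)\bigr)\big|_X,
\]
so that $K_X+\Delta$ is Cartier. Applying Koll\'ar--Shokurov adjunction from the plt pair $(A,\sum V(g_i))$ to its minimal LC center $X$ then gives that $(X,\Delta)$ is klt in the usual sense.

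For part (ii), an analogous construction, starting from the MJ-log canonical condition and from the exceptional divisor $E$ realizing $\ha(E;X,\ox)=0$ with center $W$, produces an effective $\bQ$-divisor $\Delta_X$ on $X$ such that $(X,\Delta_X)$ is log canonical in the usual sense. The key point is that $W$ survives as an LC center of $(X,\Delta_X)$: by Inversion of Adjunction, $E$ becomes an LC place of $(A,I_X^c)$, and by genericity of the $g_i$ it remains an LC place of $(A,\sum V(g_i))$ with the same center $W$; adjunction to $X$ then realizes $W$ as an LC center of $(X,\Delta_X)$, and its minimality as an MJ-LC center on $X$ translates into minimality as an LC center of $(X,\Delta_X)$. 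I then invoke Kawamata's theorem on minimal LC centers (or the Kawamata--Ambro subadjunction formula) to conclude that $W$ is normal and carries an effective $\bQ$-divisor $\Delta$ with $(W,\Delta)$ klt in the usual sense.

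The principal obstacle will be the two genericity claims in part (i): that general $g_1,\ldots,g_c\in I_X$ produce a reduced complete intersection with $X$ as a \emph{reduced} component (and not merely an embedded one), and that the log discrepancies of $(A,\sum V(g_i))$ genuinely match those of $(A,I_X^c)$ along every exceptional divisor whose center lies strictly inside $X$. Both reduce to standard Bertini and base-locus considerations on a chosen log resolution of $I_X\jx$, but they require some care because the $g_i$ must be chosen compatibly with a resolution that itself depends on $I_X$. The analogous issue in part (ii)---ensuring $W$ persists as a \emph{minimal} LC center of the adjoint pair $(X,\Delta_X)$---requires a bijective correspondence between MJ-LC places on $X$ with center in $W$ and LC places of $(A,\sum V(g_i))$ with center in $W$, which again follows from the same Bertini considerations but must be verified.
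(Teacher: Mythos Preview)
Your overall strategy---embed $X$ in a smooth $A$, invoke Inversion of Adjunction to make $X$ an LC center of $(A,I_X^c)$, pass to an honest divisor, and then apply a subadjunction-type theorem---is exactly the paper's. The difference is in how you pass to a divisor, and this is where your argument breaks.

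For $c\ge 2$ the pair $(A,\sum_{i=1}^c V(g_i))$ is \emph{never} plt near $X$, and $X$ is \emph{never} its unique LC center. Each $V(g_i)$ has coefficient $1$, so all the strata $V(g_{i_1})\cap\cdots\cap V(g_{i_k})$ are LC centers; more concretely, if $E$ is the exceptional divisor of the blow-up of $A$ along $X$, then $\ord_E K_{A'/A}=c-1$ while $\ord_E(\sum V(g_i))=c$ for generic $g_i$, giving log discrepancy $0$ at an exceptional divisor. So what you call ``Koll\'ar--Shokurov adjunction from a plt pair'' does not apply, and the conductor divisor you constructed is not automatically a klt boundary (adjunction from an lc pair to a component of a non-normal LC stratum only yields lc, not klt). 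The paper sidesteps this by taking a \emph{single} general $g\in I_X^{2c}$ and setting $D=\tfrac{1}{2}\operatorname{div}(g)$; then $\lfloor D\rfloor=0$, the discrepancies of $(A,D)$ match those of $(A,I_X^c)$ on a fixed log resolution, and $X$ truly is the unique LC center of $(A,D)$. One then invokes the Fujino--Gongyo local subadjunction formula (not codimension-one adjunction) to produce the klt boundary on $X$.

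For part~(ii) there is a second issue: you first build a boundary $\Delta_X$ on $X$ and then argue about LC centers of $(X,\Delta_X)$, but an MJ-log canonical $X$ need not be normal, so the pair $(X,\Delta_X)$ may not even make sense in the usual framework. The paper avoids this entirely: it shows directly that $W$ is a minimal LC center of $(A,I_X^c)$ (using that MJ-minimality of $W$ gives $\mld(Z;A,I_X^c)>0$ for every proper $Z\subsetneq W$), replaces $I_X^c$ by the same $\tfrac{1}{2}\operatorname{div}(g)$, and applies Fujino--Gongyo subadjunction to $W$ sitting inside the smooth ambient $A$, never descending to $X$.
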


\begin{proof} As $X$ is MJ-canonical,
 it is irreducible and normal by \cite{dd} or \cite{eim}.
If there exist an open covering $\{ U_i\}$ of $X$ (resp. $W$) and an effective $\bQ$-divisor $\Delta_i$
on $U_i$  such that $(U_i, \Delta_i)$ is klt for each $i$, then by \cite[Theorem 1.2]{dfh} there exists 
a global $\bQ$-divisor $\Delta$ on $X$ (resp. $W$) such that $(X, \Delta )$ (resp. $(W,\Delta)$) is klt.
So we may assume that $X$ is affine for both statement (i) and (ii).
 Let $X$ be embeded in a non singular affine variety $A$ with codimension $c$ and the 
defining ideal $I_X$. 

(i)  As $X $ is MJ-canonical, we have $\hmld(Z; X,\ox)\geq 1$ for every proper closed subset
$Z\subset X$.
By Inversion of Adjunction we have 
$$\mld(Z;A,I_X^c)\geq 1.$$
On the other hand, for any point $\eta\not\in X$ in $A$
$$\mld(\eta;A,I_X^c)=\mld(\eta;A, \oa)\geq 1$$
because $A$ is non singular.
Finally for the generic point $\eta$ of $X$, we have
$$\mld(\eta,A,I_X^c)=0.$$
Hence, $X$ is the unique log canonical center of $(A, I_X^c)$.

Now, take a log resolution $\varphi:\ola\to A$ of $(A, I_X)$.
Take a general element 
$g\in I_X^{2c}$ and
let $D_0$ be the zero locus of $g$ on $A$ and then let $D=\frac{1}{2}D_0$.
Then, by the generality of $g$, the morphism $\varphi$ is also a log resolution of
$(A, D_0)$ and for every exceptional prime divisor $E_i$ on $\ola$ we have
$$a(E_i; A, I_X^c)=a(E_i;A,D).$$
As $(A, D)$ is klt outside of $X$, $(A,D)$ has also unique log canonical center $X$.
Then, by Local Subadjunction Formula by Fujino and Gongyo (\cite{fg}),
there exists a $\bQ$-divisor $\Delta$ on $X$ such that 
$(X, \Delta)$ is klt.

(ii) By Inversion of Adjunction we have 
$\hmld(W; A, I_X^c)=0$ and \\
$ \hmld(Z;A, I_X^c)\geq 0$ for every strictly  proper closed subset 
$Z$ of $X$.
By the minimality of $W$ we have
$$\hmld(Z; A, I_X^c)>0 $$
for every  strictly proper closed subset $ Z\subset W.$
We also have 
$\hmld(\eta; A, I_X^c)=0  $ for the generic point $\eta$ of an irreducible component of $X$
and $\hmld(\eta;A, I_X^c)\geq 1$ for any point $\eta\not\in X$ in $A$.
Therefore, $(A, I_X^c)$ is log canonical and $W$ is a minimal log canonical center of $(A, I_X^c)$.
Then, by the same argument as in (i), we have $(W,\Delta)$ is klt for some 
boundary $\Delta$.

\end{proof}

%%%%%%%%%%%%%%%%%%
\section{Deformations}

In this section we prove that MJ-canonical singularities and MJ-log canonical singularities 
are preserved under small deformations.
First we start with the strengthening of Inversion of Adjunction. 
Proposition \ref{inversion} does not hold for singular $A$ in general (see, \cite[Example 3.13 ]{Ishii}), but
if $X$ is a complete intersection in a singular $A$, then it holds.

\begin{cor}[Strong Inversion of Adjunction] \label{stin} Let $A$ be an affine connected reduced equidimensional scheme of finite type over $k$ of dimension $d+c$ containing $X$ as a complete
intersection, {\it i.e.,} $X$ is defined by $c$ equations $f_1=f_2=\cdots =f_c=0$ in
$A$. 
Then, the following hold:
\begin{enumerate}
\item[(i)]
Assume $X$ is reduced and let $W$ be a strictly  proper closed subset of $X$.
 Let $\widetilde\a\subset \o_A$ be an ideal such that its image ${\a}:=\widetilde\a\o_X\subset \o_X$ is non-zero on each irreducible component of $X$.
 Then,
$$\hmld(W; X,{\a^t})=
\hmld(W;A,\widetilde\a^t (f_1,\ldots, f_c)^c).$$
\item[(ii)] If $A$ satisfies $S_2$, $c=1$ and $(A, (f_1))$ is MJ-log canonical,
then automatically $X$ is reduced and the formula in (i) holds.

\end{enumerate}
\end{cor}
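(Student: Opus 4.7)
For part~(i) the plan is to embed $A$ locally into a smooth affine variety $B$ of dimension $N=d+c+e$, where $e=\codim(A,B)$, and apply the non-singular Inversion of Adjunction (Proposition~\ref{inversion}) to the two embeddings $X\subset B$ (of codimension $c+e$) and $A\subset B$ (of codimension $e$). After lifting $f_1,\ldots,f_c$ to $\tilde f_1,\ldots,\tilde f_c\in\o_B$ and writing $I_A,I_X\subset\o_B$ for the defining ideals, we have $I_X=I_A+(\tilde f_1,\ldots,\tilde f_c)$, and the two applications of Proposition~\ref{inversion} reduce the claim to the identity
\[
\mld(W;B,\tilde{\tilde\a}^{t}I_X^{c+e}) \;=\; \mld(W;B,\tilde{\tilde\a}^{t}(\tilde f_1,\ldots,\tilde f_c)^{c}I_A^{e}),
\]
where $\tilde{\tilde\a}\subset\o_B$ is any lift of $\widetilde\a$, using crucially that $(f_1,\ldots,f_c)$ is a regular sequence in $\o_A$.

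Next I would rewrite both minimal log discrepancies using the jet-space formula of \cite[Remark~3.8]{Ishii} employed in the proof of Proposition~\ref{description}. Combined with the elementary identity $\cont^{\geq q}(I_X)=\cont^{\geq q}((\tilde f))\cap\cont^{\geq q}(I_A)$, both sides become infima over non-negative integer parameters $(m,n_1,n_2)$ of an expression
\[
\codim\bigl(\cont^{\geq m+1}(\tilde{\tilde\a})\cap\cont^{\geq n_1+1}((\tilde f))\cap\cont^{\geq n_2+1}(I_A)\cap\cont^{\geq 1}(I_W)\bigr)-(m+1)t-(n_1+1)c-(n_2+1)e,
\]
where the $X$-side infimum is the restriction to the diagonal $n_1=n_2$, so the inequality $\hmld(W;A,\widetilde\a^{t}(f)^{c})\leq\hmld(W;X,\a^{t})$ is automatic. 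For the reverse direction, given $(m,n_1,n_2)$ with, say, $n_1\geq n_2$, one compares with the diagonal choice $q=n_2$; the required bound reduces to the codimension estimate that strengthening $\cont^{\geq n_2+1}((\tilde f))$ to $\cont^{\geq n_1+1}((\tilde f))$, on arcs already constrained by $\cont^{\geq n_2+1}(I_A)$, costs codimension at least $c(n_1-n_2)$. This is precisely where the regular-sequence hypothesis enters: on arcs satisfying $\val(I_A)\geq n_2+1$ the $\tilde f_i$ reduce to a regular sequence in $\o_A$, so their higher order vanishing imposes the expected transverse codimensions. The symmetric case $n_2>n_1$ is handled analogously using $q=n_1$ and the codimension lower bound $\codim\cont^{\geq r}(I_A)\geq er$ coming from $A$ being reduced and equidimensional of codimension $e$. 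This transversality estimate in the arc space is the main technical obstacle of the proof.

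For part~(ii), with $c=1$, I would argue by contradiction: if $X=V(f_1)$ were non-reduced, then since $A$ is $S_1$ and $f_1$ is a non-zero-divisor, there would exist a prime divisor $D\subset A$ with $f_1\in I_D^{2}$ generically along $D$, giving $\val_{v_D}(f_1)\geq 2$ for the corresponding divisorial valuation. I would then show $\ord_{v_D}(\hk-J)=0$ in all cases, forcing $\ha(v_D;A,(f_1))\leq -1<0$ against MJ-log canonicity. When the generic point of $D$ is smooth on $A$ this is immediate since any log resolution is an isomorphism at the generic point of $D$. Otherwise $D$ meets the singular locus of $A$ in codimension~$1$; the $S_2$ assumption together with the MJ-log canonicity of $(A,\o_A)$ (automatic from that of $(A,(f_1))$) force, via the classification recalled in the proof of Proposition~\ref{slc}, the generic point of $D$ to be either a normal crossing double or a pinch point. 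For each such local model, one applies the standard Inversion of Adjunction to the hypersurface $A\subset\bA^{d+2}$ to get $\hmld(D;A,(f_1))=\mld(D;\bA^{d+2},\tilde f_1\cdot g)$ with $g$ the defining equation of $A$, and a direct order-of-vanishing count yields $\ord_D(\tilde f_1\cdot g)\geq 4>2=\codim_{\bA^{d+2}}(D)$, forcing the mld to be negative. Once $X$ is known to be reduced, the formula in~(ii) is just~(i) with $c=1$.
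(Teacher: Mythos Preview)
Your approach to part~(i) differs substantially from the paper's and contains a genuine gap. You reduce to the identity
\[
\mld(W;B,\tilde{\tilde\a}^{\,t}I_X^{\,c+e}) \;=\; \mld(W;B,\tilde{\tilde\a}^{\,t}(\tilde f)^{c}I_A^{\,e})
\]
and attack it by arc-space codimension estimates. For the case $n_2>n_1$ you invoke the bound $\codim\cont^{\geq r}(I_A)\geq er$, justified by ``$A$ reduced and equidimensional of codimension $e$''. This inequality is \emph{false}: it is equivalent to $\lct(B,I_A)\geq e$, while for example the three coordinate $2$-planes $I_A=(x_1x_2,x_2x_3,x_1x_3)\subset\o_{\bA^4}$ give $e=2$ and $\lct(\bA^4,I_A)=3/2$. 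The same example shows that even the relative version you actually need --- that strengthening the $I_A$-contact condition by $n_2-n_1$ steps raises the codimension by at least $e(n_2-n_1)$ --- fails with your choice of comparison point $q=n_1$. The companion estimate for $n_1\geq n_2$ is likewise delicate: the arcs in question are only tangent to $A$ to finite order, so the regularity of $(f_1,\ldots,f_c)$ in $\o_A$ does not transfer directly to a transversality statement in $\li(B)$. The paper avoids all of this with a short trick: re-embed $A\hookrightarrow B=\bA^{N+c}$ so that $X=A\cap L$ for a \emph{linear} subspace $L$ of codimension $c$ (this is the construction of Remark~\ref{linear}), and then apply Proposition~\ref{inversion} three times --- to $X\subset L$, to $L\subset B$, and to $A\subset B$. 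Since $I_{X/L}=I_{A/B}\o_L$ and $I_{X/A}=I_{L/B}\o_A$, the three resulting mld's on $B$ are literally the same expression, and no inequality is needed.

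For part~(ii) your strategy can be made to work but is more involved than necessary, and has an imprecision: when $D\subset\sing A$ there is no single ``corresponding divisorial valuation $v_D$'', so the contradiction should be phrased via $\hmld(D;A,(f_1))<0$ rather than via $\ha(v_D;\cdots)$. The paper's argument is shorter: one first shows $A$ is smooth at the generic point $\eta$ of every component of $X$ (otherwise $\hmld(\eta;A,\o_A)\geq 1$, so $A$ is MJ-canonical, hence normal, near $\eta$ --- a contradiction). Thus $\sing A$ has codimension $\geq 2$ near $X$; on the smooth locus $A_0$ the pair $(A_0,(f_1))$ is log canonical in the usual sense, so $f_1$ is reduced there, and finally $S_2$ lets one push the equality $(f_1)=\sqrt{(f_1)}$ from $A_0$ across $\sing A$.
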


\begin{proof} We may assume that $A$ is embedded into the affine space $\bA^N$.
By using the same idea as in Remark \ref{linear}, we can construct an embedding
$A\subset \bA^{N+c}$ such that there exists a linear subspace $L$ of codimension $c$ in 
$\bA^{N+c}$ satisfying $L\cap A=X$. Denote $B=\bA^{N+c}$. 
Let $\overline{\a}\subset \o_{B}$ be an ideal such that $\widetilde\a=\overline{\a}\o_A$ and
let $\a' =\overline{\a} \o_{L}$.
Then  we have $\a=\a'\ox$.
Let $I_{X/L}$, $I_{A/B}$, $I_{L/B}$ be the defining ideals of $X$ in $L$, $A$ in $B$, $L$ in $B$,
respectively. Then $L\cap A=X$ implies that $I_{X/A}=I_{L/{B}}\o_A$ and $I_{X/L}=I_{A/B}\o_L$.

Noting that  $B$ and $L$ are non singular, apply Proposition \ref{inversion} for
$X\subset L$, $L\subset B$ and $A\subset B$. Then we obtain
$$ \hmld(W; X,{\a^t})=
\mld(W;L,{\a'}^t  (I_{X/L})^{N-d}),$$
$$ \mld(W;L, {\a'}^t (I_{X/L})^{N-d})
=\mld(W; B,{\overline\a}^t(I_{A/{B}})^{N-d}(I_{L/{B}})^c),$$
$$\hmld(W, A,\widetilde\a^t (I_{X/A})^c)=\mld(W; B,{\overline\a}^t(I_{L/{B}})^c(I_{A/{B}})^{N-d}).$$
The required equality in (i) follows from just composing these equalities.

For the proof of (ii), first we see that $A$ is smooth at the generic point of every irreducible
component of $X$.
This is proved as follows: Assume $A$ is not smooth at the generic point $\eta$ of an irreducible component of $X$.
Then as $\hmld (\eta; A, (f_1))\geq 0$, we have $\hmld (\eta;A,\oa)\geq 1$, 
which implies that $A$ is MJ-canonical around general points of $\overline{\{\eta\}}$.
But MJ-canonical singularities are normal, a contradiction.
By restricting $A$ to a neighborhood of $X$ we may assume that  $Z=\sing A$ is of 
codimension $\geq 2$.
Let $A_0=A\setminus Z$, then $(A_0, (f_1))$ is MJ-log canonical, but it is equivalent
to that $(A_0, (f_1))$ is log canonical in the usual sense, because $A_0$ is non singular.
Therefore $f_1$ is reduced on $A_0$. 
For every open subset $U\subset A$, define $U_0:=U\cap A_0$
Consider the exact sequence:
$$0\to \Gamma(U, (f_1))\to \Gamma(U_0, (f_1))\to H_{Z\cap U}^1(U, (f_1)).$$
Since the ideal sheaf $(f_1)$ is principal, the last term $H_{Z\cap U}^1(U, (f_1))$
is isomorphic to $H_{Z\cap U}^1(U, \o_U)$
and this  is  0,  because 
 $A$ is $S_2$.
 Therefore by the exact sequence, we obtain $(f_1)=i_*(f_1|_{A_0})=i_*(\sqrt{(f_1|_{A_0})})\supset \sqrt{(f_1)}$, where $i:A_0\hookrightarrow A$ is the inclusion. 
 This shows that the ideal $(f_1)$ is reduced on $A$.
 Once we know that $X$ is reduced we can apply (i) to obtain the formula of 
 $\hmld$.

\end{proof}

\begin{defn}
Let $T$ be a reduced scheme of finite type over $k$ and $0\in T$ a closed point. 
Let $\pi: X\to T$ be a surjective morphism   
with equidimensional reduced fibers $X_\tau=\pi^{-1}(\tau)$ of common dimension $r$ for all closed points $\tau\in T$.
 Then $\pi:X\to T$ is called a deformation of $X_0$ with the parameter space $T$. 
 
 If moreover a pair $(X, \a^t)$ is given, $\a^t_\tau=\a^t\o_{X_\tau}$ are not zero 
 on each irreducible component of $X_\tau$ for
 all $\tau\in T$ and $\pi:X\to T$ is a deformation of $X_0$,
 then the family $\{(X_\tau, \a^t_\tau)\}_{\tau\in T}$ is called a deformation of $(X_0, \a^t_0)$.
 
 From now on, for a morphism $\pi: Z\to T$ from some scheme $Z$ to the parameter space 
 $T$, we denote the fiber $\pi^{-1}(\tau)$ by $Z_\tau$.
\end{defn}

\begin{lem}\label{generalfiber}
  Let $\pi:X\to T$ be a deformation of $(X_0, \a_0^t)$ $(0\in T)$ given by a non zero ideal
  $\a\subset\ox$.
  Then, there exists an open dense subset $T_0\subset T$ and a log resolution 
  $\varphi: Y\to X$ of $(X, \a\jx)$ such that for every $\tau\in T_0$ the following hold:
  \begin{enumerate}
 \item[(i)] $\varphi_\tau: Y_\tau\to X_\tau$ is a log resolution of $(X_\tau, \a_\tau\j_{X_\tau})$;
 \item[(ii)] $(\hk_{Y/X}-J_{Y/X}-tZ)|_{Y_\tau}=\hk_{{Y_\tau}/X_{\tau}}-J_ {Y_\tau}/X_{\tau}-tZ_\tau,$
  \end{enumerate}
where $\varphi_\tau $ is the restriction of $\varphi$ onto the fiber $Y_\tau=(\pi\circ\varphi)^{-1}(\tau)$ and $\a\oy=\oy(-Z)$, $\a_\tau\o_{Y_\tau}=\o_{Y_\tau}(-Z_\tau)$.

In particular, $(X|_{T_0},\a^t)$ is MJ-log canonical (resp. MJ-canonical ) if and only if
$(X_\tau, \a^t_\tau)$ is MJ-log canonical (resp. MJ-canonical ) for every $\tau \in T_0$.
\end{lem}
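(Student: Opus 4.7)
The plan is to begin with any log resolution $\varphi\colon Y\to X$ of the pair $(X,\a\jx)$ that factors through the Nash blow-up of $X$, and then to carve out a dense open $T_0\subset T$ small enough that the restriction $\varphi_\tau$ is itself a log resolution of $(X_\tau,\a_\tau\j_{X_\tau})$ with the Mather--Jacobian discrepancy restricting correctly. Let $E=\sum E_i$ be the reduced SNC divisor on $Y$ supporting the exceptional locus of $\varphi$ together with $\hk_{Y/X}$, $J_{Y/X}$, and $Z$. Applying generic smoothness (valid in characteristic zero) to the composition $\pi_Y:=\pi\circ\varphi\colon Y\to T$ and to its restrictions to each intersection $E_{i_1}\cap\cdots\cap E_{i_k}$ yields a dense open $T_0\subset T$ over which $\pi_Y$ and all these strata are smooth; hence $Y_\tau$ is smooth and $E|_{Y_\tau}$ is SNC for every $\tau\in T_0$. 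Shrinking $T_0$ further using generic flatness of $\pi$ and semi-continuity of fibre dimension applied to the proper closed subset $X\setminus U$, where $U$ is the isomorphism locus of $\varphi$, I may assume that $\varphi_\tau\colon Y_\tau\to X_\tau$ is proper and birational for every $\tau\in T_0$.

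The technical heart is the pair of equalities $J_{Y/X}|_{Y_\tau}=J_{Y_\tau/X_\tau}$ and $\hk_{Y/X}|_{Y_\tau}=\hk_{Y_\tau/X_\tau}$. For the first, I would exploit the sequence
\[
\pi^*\Omega_T|_{X_\tau}\to\Omega_X|_{X_\tau}\to\Omega_{X_\tau}\to 0,
\]
which, over the smooth locus of $\pi\colon X\to T$ (dense in $X_\tau$ for $\tau$ generic), becomes short exact with locally free kernel of rank $n=\dim T$. The standard shift property of Fitting ideals under such sequences, together with their compatibility with base change, then gives
\[
\jx\o_{X_\tau}=\operatorname{Fitt}_d(\Omega_X|_{X_\tau})=\operatorname{Fitt}_r(\Omega_{X_\tau})=\j_{X_\tau},
\]
with $d=n+r$, at least on a dense open of $X_\tau$; a further shrinking of $T_0$ propagates this to an equality of ideal sheaves on all of $Y_\tau$. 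For the Mather discrepancy, smoothness of $\pi_Y$ over $T_0$ provides a decomposition $\Omega^d_Y=\pi_Y^*(\wedge^n\Omega_T)\otimes\Omega^r_{Y/T}$, and similarly $\Omega^d_X=\pi^*(\wedge^n\Omega_T)\otimes\Omega^r_{X/T}$ on the smooth-over-$T$ locus of $X$. After trivializing $\wedge^n\Omega_T$ near $\tau$, the canonical map $\varphi^*\Omega^d_X\to\Omega^d_Y$ restricts on $Y_\tau$ to $\varphi_\tau^*\Omega^r_{X_\tau}\to\Omega^r_{Y_\tau}$, and comparing image ideals yields the desired equality of Mather discrepancy divisors.

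With these two equalities plus the tautological $Z|_{Y_\tau}=Z_\tau$, formula (ii) is immediate, and (i) follows because $E|_{Y_\tau}$ is SNC and supports the relevant divisors on $Y_\tau$. For the ``in particular'' clause I would note that each irreducible component of $E$ meeting $Y_\tau$ restricts to a disjoint union of components of $E|_{Y_\tau}$ with the same coefficient in $\hk_{Y/X}-J_{Y/X}-tZ$, which by (ii) equals the coefficient in $\hk_{Y_\tau/X_\tau}-J_{Y_\tau/X_\tau}-tZ_\tau$. So the condition $\ha(E_i;X,\a^t)\ge 0$ (resp.\ $\ge 1$) for every exceptional $E_i$ over $X|_{T_0}$ is equivalent to the fibrewise condition for all $\tau\in T_0$.

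The main obstacle, I expect, lies in the second paragraph above: both invariants are defined through modules of top-degree K\"ahler differentials on the possibly singular total space $X$, so transferring them to the fibre requires a careful analysis of the relative cotangent sequence and of Fitting-ideal shifts on the singular scheme $X$ itself. Ensuring that the resulting equalities hold scheme-theoretically on all of $Y_\tau$, rather than merely on a dense open, is what forces the successive generic-smoothness shrinkings of $T_0$.
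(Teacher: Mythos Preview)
Your generic-smoothness framework in the first paragraph matches the paper. The gap is exactly where you suspect, in the second paragraph. Your Fitting-shift argument for $\jx\o_{X_\tau}=\j_{X_\tau}$ needs the sequence $\pi^*\Omega_T|_{X_\tau}\to\Omega_X|_{X_\tau}\to\Omega_{X_\tau}\to 0$ to be left-exact with locally free kernel, and you only know this on the smooth locus of $\pi$. But the entire content of the identity lives over $\sing X_\tau$, where this sequence typically fails to be left-exact, and no ``further shrinking of $T_0$'' repairs that: the fibrewise singular locus is horizontal and persists over every $\tau$. Since you never resolve $\j_{X_\tau}$ (or anything that base-changes to it), you also do not know that $\varphi_\tau$ is a log resolution of $\j_{X_\tau}$, so part~(i) is in jeopardy as well. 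The Mather argument has the same defect: the identification $\Omega_X^d\cong\pi^*(\wedge^n\Omega_T)\otimes\Omega^r_{X/T}$ is only available where $\pi$ is smooth, so comparing image ideals there says nothing over $\sing X_\tau$.

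The paper takes a genuinely different route that sidesteps both problems. It embeds $X$ in a smooth $A$, chooses a \emph{factorizing} embedded resolution $\Phi\colon\ola\to A$ with $I_X\ooa=I_Y\ooa(-R)$, and includes the \emph{relative} Jacobian $\j_{X/T}=\operatorname{Fitt}_r(\Omega_{X/T})$ among the ideals being log-resolved. Because Fitting ideals commute with base change and $\Omega_{X/T}\otimes\o_{X_\tau}=\Omega_{X_\tau}$, one has $\j_{X/T}\o_{X_\tau}=\j_{X_\tau}$ everywhere, so $\varphi_\tau$ automatically resolves $\j_{X_\tau}$. For~(ii) the paper never separates $\hk$ from $J$: Lemma~\ref{lemcom} gives $\hk_{Y/X}-J_{Y/X}=(K_{\ola/A}-cR)|_Y$, a quantity living entirely on the smooth ambient varieties. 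Over $T_0$ both $\ola$ and $A$ are smooth over $T$, hence $K_{\ola/A}|_{\ola_\tau}=K_{\ola_\tau/A_\tau}$, and applying Lemma~\ref{lemcom} again on the fibre yields the restriction formula. If you want to rescue your direct approach, the natural move is to work with $\Omega_{X/T}$ and $\j_{X/T}$ throughout; but you would then still owe a comparison between the absolute and relative Mather--Jacobian discrepancies on $Y$, which is exactly what the factorizing-resolution formula delivers for free.
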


\begin{proof} As it is sufficient to prove the existence of such an open subset of $T$ on each irreducible 
component, we may assume that $T$ is irreducible.
 Let $r$ be the common dimension of the fiber $X_\tau$ for closed points $\tau\in T$.
Let $\j_{X/T}$ be the $r$-th Fitting ideal of $\Omega_{X/T}$.
By Proposition \ref{eefact}, we can take a factorizing resolution $\Phi:\ola\to A$ of $X$ in $A$ 
with the strict transform $Y$ of $X$ in $\ola$ such that the restriction $\varphi:Y\to X$ of 
$\Phi$ is a log resolution of $(X,\a\jx\j_{X/T})$.
Let $E_i$ $(i=1,\ldots, s)$ be an exceptional prime divisor of $\Phi$.
Then, by the generic smoothness theorem, there is an open dense subset $T_0$ of $T$  such that $E_{i_1}\cap\cdots\cap E_{i_j}$, $E_{i_1}\cap\cdots\cap E_{i_j}
\cap Y$, $Y$,  $\ola$, $A$ are smooth over $T_0$ for all collections $\{i_1,\ldots, i_j\}$ if they are not empty.
On the other hand, since $\Phi$ is a factorizing resolution of $X$ in $A$, we have an effective 
divisor $R$ on $\ola$ such that $I_X\ooa=I_Y\ooa(-R)$. 
Replacing $T_0$ by a smaller open subset if necessary, we may assume that
the support of $R$ does not contain  $\ola_{\tau}$ $(\tau\in T_0)$.
By restricting this equality on the fiber of $\tau$, we have
$$I_X{\o_{\ola_\tau}}=I_{Y_\tau}{\o_{\ola_\tau}}(-R|_{A_\tau}).$$
Because of this, $\Phi_\tau:\ola_\tau\to A_\tau$ is a factorizing resolution of $X_\tau$ in $A_\tau$ for every $\tau \in T_0$.

Then, by $\Omega_{X/T}\otimes \o_{X_\tau}=\Omega_{X_\tau}$ and the functoriality  of Fitting ideals, we have  $\j_{X/T}\o_{X_\tau}=\j_{X_\tau}$ for every $\tau\in T_0$.
This shows that $\varphi_\tau$ is a log resolution of $(X_\tau, \a_\tau\j_{X_\tau})$.

By the Lemma \ref{lemcom} we have $$\hk_{Y/X}-J_{Y/X}= (K_{\ola/A}-cR)|_{Y},$$
where $c=\codim (X, A)$. Noting that $c$ is also the codimension of $X_\tau$ in $A_\tau$ for
a closed point $\tau\in T_0$, we have
$$\hk_{Y_\tau/X_\tau}-J_{Y_\tau/X_\tau}= (K_{\ola_\tau/A_\tau}-cR|_{\ola_\tau})|_{Y_\tau}.$$
Since $(K_{\ola/A})|_{\ola_\tau}=K_{\ola_\tau/A_\tau}$, we obtain for $\tau\in T_0$
$$(\hk_{Y/X}-J_{Y/X})|_{Y_\tau}=\hk_{{Y_\tau}/X_{\tau}}-J_ {{Y_\tau}/X_{\tau}}.$$
For the  statement (ii) we have only to note that  $Z|_{Y_\tau}=Z_\tau$ for $\tau\in T_0$.

\end{proof}

\begin{thm}\label{deform-log}
  Let $\{(X_\tau, \a^t_\tau)\}_{\tau\in T}$ be a deformation of $(X_0,\a^t_0)$.
  Assume $(X_0,\a^t_0)$ is MJ-log canonical at $x\in X_0$.
  Then there are neighborhoods $X^*\subset X$ of $x$ and $T^*\subset T$ of $0$ such that
  $(X^*_\tau, \a^t_\tau|_{X^*_\tau})$ is MJ-log canonical for every closed point $\tau\in T^*$.
\end{thm}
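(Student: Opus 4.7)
My plan is to reduce, via a fiberwise application of inversion of adjunction, to a statement about the openness of the log-canonical locus for a family of $\bR$-ideals on a smooth ambient variety, which is then established by a simultaneous log resolution together with adjunction along the special fiber.

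Since the assertion is local at $x$, shrink $X$ to an affine neighborhood and fix a closed embedding $X\hookrightarrow A\times T$ with $A=\bA^N$ so that $\pi$ is induced by the projection to $T$; then $\codim(X_\tau,A)=\codim(X,A\times T)=c$. Because MJ-log canonicity of $(X_\tau,\a_\tau^t)$ at $y$ depends only on the germ of the fiber, we may further assume (after pulling back along smooth affine curves through $0$ and invoking the constructibility of the MJ-log canonical locus) that $T$ is smooth. By Proposition~\ref{inversion} applied fiberwise,
\[
\hmld(y;X_\tau,\a_\tau^t)\;=\;\mld\bigl(y;A,\mathcal{J}_\tau\bigr),\qquad \mathcal{J}_\tau:=\widetilde{\a}_\tau^{\,t}\,I_{X_\tau}^{\,c},
\]
reducing the problem to producing neighborhoods $U\ni(x,0)$ in $A\times T$ and $T^*\ni 0$ in $T$ such that $(A,\mathcal{J}_\tau)$ is log canonical at $y$ whenever $(y,\tau)\in U$ and $\tau\in T^*$.

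Take a log resolution $\varphi\colon Y\to A\times T$ of $\mathcal{J}\cdot I_{A\times\{0\}}$, where $\mathcal{J}:=\widetilde{\a}^{\,t}I_X^{\,c}$, chosen to be an isomorphism at the generic point of $A\times\{0\}$ and so that $\mathrm{Exc}(\varphi)\cup\mathrm{Supp}(\varphi^{-1}\mathcal{J})\cup\hat Y_0$ has simple normal crossings, where $\hat Y_\tau$ denotes the strict transform of $A\times\{\tau\}$. Standard adjunction on the smooth divisor $\hat Y_\tau\subset Y$ yields, for each prime $E_i\ne\hat Y_\tau$ on $Y$ meeting $\hat Y_\tau$,
\[
a\bigl(E_i\cap\hat Y_\tau;A,\mathcal{J}_\tau\bigr)\;=\;a\bigl(E_i;A\times T,\mathcal{J}\bigr)\;-\;\ord_{E_i}\!\bigl(\varphi^*(t-\tau)\bigr),
\]
and the correction term vanishes unless $\varphi(E_i)$ lies in some fiber $A\times\{\tau_0\}$, which happens for only finitely many values $\tau_0$. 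Put
\[
W_{\mathrm{bad}}\;:=\;\bigcup_{\substack{i:\,a(E_i;A\times T,\mathcal{J})<0\\ \pi\circ\varphi(E_i)=T}}\varphi(E_i)\;\subset\;A\times T,
\]
a closed subset. Any prime $E_i$ whose image dominates $T$ must satisfy $E_i\cap\hat Y_0\ne\varnothing$ (the center of any blow-up dominating $T$ meets $A\times\{0\}$ by properness, so its exceptional divisor meets $\hat Y_0$); if moreover $\varphi(E_i)\ni(x,0)$, then the hypothesis combined with Proposition~\ref{atx}(i) and the displayed identity at $\tau=0$ forces $a(E_i;A\times T,\mathcal{J})\geq\ord_{E_i}(\varphi^*t)\geq 0$. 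Hence $(x,0)\notin W_{\mathrm{bad}}$.

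By closedness of $W_{\mathrm{bad}}$, pick an open $U\ni(x,0)$ in $A\times T$ disjoint from $W_{\mathrm{bad}}$, set $X^*:=X\cap U$, and let $T^*$ be a neighborhood of $0$ omitting the finitely many bad values $\tau_0\ne 0$. For every $\tau\in T^*\setminus\{0\}$ and every $y\in X_\tau^*$, the primes $E_i$ with $\varphi(E_i)\ni(y,\tau)$ all dominate $T$ (the other possible $E_i$, those with $\varphi(E_i)\subset A\times\{\tau_0\}$, are excluded by our choice of $T^*$ when $\tau_0\ne 0$, and have image disjoint from $A\times\{\tau\}$ when $\tau_0=0$), so the correction term in the adjunction identity vanishes and $(y,\tau)\notin W_{\mathrm{bad}}$ gives $a(E_i;A\times T,\mathcal{J})\geq 0$; thus $(A,\mathcal{J}_\tau)$ is log canonical at $y$ and $(X_\tau^*,\a_\tau^t|_{X_\tau^*})$ is MJ-log canonical. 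The case $\tau=0$ is the hypothesis. The principal obstacle is constructing the log resolution $\varphi$ with the required SNC compatibility along $A\times\{0\}$ while retaining an isomorphism at the generic point there, which is what delivers the adjunction identity on the special fiber and distinguishes the full openness statement from the generic-fiber result of Lemma~\ref{generalfiber}.
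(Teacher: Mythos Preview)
Your overall strategy---translate via fiberwise inversion of adjunction to the smooth ambient $A\times T$, show $(A\times T,\mathcal{J})$ is log canonical near $(x,0)$, and then restrict to nearby fibers---is a legitimate alternative to the paper's proof, and is in fact the route the paper alludes to in the Remark immediately following the theorem. The paper's own argument is shorter and stays on $X$: after reducing to $T$ a smooth curve, it applies Strong Inversion of Adjunction (Corollary~\ref{stin}) to the Cartier divisor $X_0\subset X$ to get $\hmld(x;X,\a^t(f))=\hmld(x;X_0,\a_0^t)\geq 0$, hence $\hmld(x;X,\a^t)\geq 0$, so $(X,\a^t)$ is MJ-log canonical near $x$ by Proposition~\ref{atx}; Lemma~\ref{generalfiber} then handles the general fiber.

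There is, however, a genuine gap in your argument for $(x,0)\notin W_{\mathrm{bad}}$. You assert that a prime $E_i$ on $Y$ which dominates $T$ and has $(x,0)\in\varphi(E_i)$ must meet $\hat Y_0$, reasoning that its center meets $A\times\{0\}$. But that only forces $E_i$ to meet the \emph{full} fiber $\varphi^{-1}(A\times\{0\})=\hat Y_0\cup(\text{vertical exceptional components})$; it can miss $\hat Y_0$ itself. For instance, after blowing up a smooth center contained in $A\times\{0\}$, the strict transform $\hat Y_0$ becomes disjoint from horizontal divisors whose images still pass through $(x,0)$. Without $E_i\cap\hat Y_0\neq\varnothing$ your displayed adjunction identity at $\tau=0$ is vacuous, and you cannot read off $a(E_i;A\times T,\mathcal J)\geq 0$ from the hypothesis on $(A,\mathcal J_0)$. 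This step is precisely the nontrivial (``inversion'') direction of adjunction that you are implicitly trying to reprove by hand.

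The repair is short once named: since $A\times\{0\}$ is a smooth Cartier divisor in the smooth $A\times T$, classical inversion of adjunction (equivalently Proposition~\ref{inversion} with $c=1$, applied on the ambient side) gives that $(A,\mathcal J_0)$ log canonical near $x$ implies $(A\times T,\mathcal J\cdot(t))$ log canonical near $(x,0)$, hence $(A\times T,\mathcal J)$ is log canonical there. Then \emph{every} $E_i$ with center through $(x,0)$ satisfies $a(E_i;A\times T,\mathcal J)\geq 0$, so $(x,0)\notin W_{\mathrm{bad}}$ with no horizontality proviso needed, and your restriction-to-general-fiber step (in the spirit of Lemma~\ref{generalfiber}) finishes the proof.
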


\begin{proof}
  The statement is reduced to the case that $T$ is a non singular curve.
  Then $X_0$ is defined by one equation, say $f=0$, and $\dim X_0$ is one less than $\dim X=d$.
  By applying Corollary \ref{stin}, we have
$$\hmld(x; X_0,{\a_0^t})=
\hmld(x;X,\a^t  (f)).$$
By the assumption we have $\hmld(x; X_0,{\a_0^t})\geq 0$ which implies \\
$\hmld(x;X,\a^t (f))\geq 0$ and 
therefore $\hmld(x;X,\a^t )\geq 0$. 
Then, by Proposition \ref{atx} there is an open neighborhood $X^*\subset X$ of $x$ such that $(X^*, \a^t|_{X^*})$ 
is MJ-log canonical.
Then, by the last statement of Lemma \ref{generalfiber}, there exists an open subset 
$T*$ such that 
$(X^*_\tau, \a^t_\tau|_{X^*_\tau})$ is MJ-log canonical for every $\tau\in T^*$.
\end{proof}

\begin{rem}
  Replacing $X$ by  a small neighborhood of $x$, we can assume that 
  $X\subset T\times \bA^N$, since the morphism $X\to T$ is of finite type.
  If $T$ is non singular, then $A=T\times\bA^N\to T$ is a smooth morphism of non singular 
  varieties.
  For $(X, \a^t)$, take $\widetilde\a\subset A$ as the pull back of $\a$ by the canonical surjective
  map $\o_A\to \ox$.
  Then, we can prove that $(X_\tau, \a_\tau^t)$ is MJ-log canonical if and only if 
  $(A_\tau,\widetilde\a (I_{X_\tau})^c) $ is log canonical.
  By using this fact, Theorem \ref{deform-log} can also be  proved by discussions only on $A$ and $A_\tau$.
\end{rem}

For the similar statement as Theorem \ref{deform-log} for MJ-canonical singularities we need some notions and a lemma.

\begin{defn}
Let $A$ be a non singular variety and $\eta\in A$ a (not necessarily closed) point.
For a cylinder $C\subset \mathcal{L}^\infty (A)$ we define the codimension of $C\cap \psi_{\infty 0}^{-1}(\eta)$ as follows:
$$\codim C\cap \psi_{\infty 0}^{-1}(\eta):=\codim (
\overline{\psi_{\infty m}(C\cap \psi_{\infty 0}^{-1}(\eta))}, \lm(A)),$$
for $m\gg 0$, where $\psi_{\infty m}:\mathcal{L}^\infty(A)\to \lm(A)$ is the canonical projection.

Here, note that the value of the right hand side is constant for $m\gg 0$, where $C=
\psi_{\infty n}^{-1}(S)$ for $S\subset \ln(A)$.
\end{defn}

%%%%%%%%%%%%%%%%

\begin{lem}\label{generic}
Let $A$ be a non singular variety, $\eta\in A$  a (not necessarily closed) point and
 $\a\subset\oa$ $(i=1,\ldots, r)$ a  nonzero ideal.
 Then
 $$\mld(\eta; A, \a^t)= \inf\left\{\codim\left(\cont^{m}(\a)\cap\psi_{\infty,0}^{-1}(\eta)\right)-mt\right\}.$$
\end{lem}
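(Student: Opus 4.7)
The plan is to invoke the Ein--Lazarsfeld--Mustaţă correspondence between prime divisors $E$ over the smooth variety $A$ and maximal irreducible divisorial cylinders $C_A(E)\subseteq \li(A)$. This correspondence yields the identity $\codim(C_A(E),\li(A)) = \ord_E(K_{Y/A})+1 = a(E;A,\oa)$, the inclusion $C_A(E)\subseteq \cont^{\val_E(\a)}(\a)$, and the fact that the generic arc of $C_A(E)$ maps under $\psi_{\infty 0}$ to the generic point of the center $c_A(E)$. Setting $W=\overline{\{\eta\}}$, the task is to identify the infimum in the definition of $\mld(\eta;A,\a^t)$---taken over divisors $E$ with $c_A(E)=W$---with the contact-loci infimum on the right-hand side.

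For the inequality ``$\le$'', fix any prime divisor $E$ with $c_A(E)=W$ and set $m=\val_E(\a)$. Since the generic arc of $C_A(E)$ maps to $\eta$, the intersection $C_A(E)\cap \psi_{\infty 0}^{-1}(\eta)$ is dense in $C_A(E)$; the definition of codimension via closures of images in jet spaces then gives $\codim(C_A(E)\cap \psi_{\infty 0}^{-1}(\eta)) = \codim(C_A(E)) = a(E;A,\oa)$. Using $C_A(E)\cap \psi_{\infty 0}^{-1}(\eta)\subseteq \cont^m(\a)\cap \psi_{\infty 0}^{-1}(\eta)$, we obtain
\[
\codim\bigl(\cont^m(\a)\cap \psi_{\infty 0}^{-1}(\eta)\bigr)-mt\;\le\; a(E;A,\oa)-t\val_E(\a)\;=\; a(E;A,\a^t),
\]
and taking the infimum over such $E$ finishes this direction.

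For the reverse inequality, fix $m\ge 0$ with $\cont^m(\a)\cap \psi_{\infty 0}^{-1}(\eta)\neq\emptyset$. Working at a jet level $N\gg 0$, select an irreducible component of maximal dimension of the closure $\overline{\psi_{\infty N}\bigl(\cont^m(\a)\cap \psi_{\infty 0}^{-1}(\eta)\bigr)}$ and pull it back to an irreducible cylinder $C\subseteq \li(A)$ of codimension equal to $\codim\bigl(\cont^m(\a)\cap \psi_{\infty 0}^{-1}(\eta)\bigr)$. Because $C$ contains a dense set of arcs lying over $\eta$ and of contact order $\ge m$ along $\a$, the same properties hold for its generic arc. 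By the Ein--Lazarsfeld--Mustaţă correspondence, $C\subseteq C_A(E)$ for a unique prime divisor $E$, and $c_A(E)$ equals the closure of the image of this generic arc, which is exactly $W$. Combined with $\val_E(\a)\ge m$ and $\codim(C)\ge \codim(C_A(E))=a(E;A,\oa)$, we deduce
\[
\codim\bigl(\cont^m(\a)\cap \psi_{\infty 0}^{-1}(\eta)\bigr)-mt\;\ge\; a(E;A,\oa)-t\val_E(\a)\;\ge\; \mld(\eta;A,\a^t).
\]

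The main technical delicacy lies in handling the codimension notion for the locally closed set $\cont^m(\a)\cap \psi_{\infty 0}^{-1}(\eta)$, which is defined through closures of jet-level images rather than directly, and in ensuring that the divisor produced in the second step has center \emph{equal to} $W$ rather than merely containing it. Both points are resolved by the observation that the generic point of the chosen cylinder $C$ lies over $\eta$ itself, so that $c_A(E)=\overline{\{\eta\}}=W$ automatically.
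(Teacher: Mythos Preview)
Your approach mirrors the paper's: both directions use the maximal divisorial sets $C_A(v)$ of de~Fernex--Ein--Ishii, first showing that each $a(E;A,\a^t)$ bounds some $\codim-mt$ from above, and then that each $\codim-mt$ bounds some $a(E;A,\a^t)$ from above. The first direction is fine. There is, however, a genuine gap in your reverse inequality.

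The cylinder--valuation correspondence associates to your irreducible cylinder $C$ a \emph{divisorial valuation} $v=q\val_E$ with an integer $q\geq 1$, not a prime divisor directly. One then has $C\subseteq C_A(q\val_E)$ with $\codim C_A(q\val_E)=q\,a(E;A,\oa)$, and the generic arc of $C$ computes $q\val_E(\a)$ on $\a$. Consequently only $q\val_E(\a)\geq m$ is available; your claim $\val_E(\a)\geq m$ is false in general. For a concrete failure take $A=\bA^2$, $\a=\mathfrak{m}_0$, $\eta=0$, $m=2$: here $\cont^2(\mathfrak{m}_0)$ is irreducible, its associated valuation is $2\val_E$ for $E$ the exceptional divisor of the blow-up at $0$, and $\val_E(\a)=1<2$. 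With the incorrect $\val_E(\a)\geq m$, your displayed chain $\codim(C)-mt\geq a(E;A,\oa)-t\val_E(\a)$ collapses.

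The paper closes this gap by first reducing to the case where $a(E;A,\a^t)\geq 0$ for every prime divisor $E$ with center $\overline{\{\eta\}}$ (otherwise $\mld(\eta;A,\a^t)=-\infty$ and the inequality to be proved is vacuous). Under that assumption, from $\codim(C)\geq \codim C_A(q\val_E)=q\,a(E;A,\oa)$ and $m=q\val_E(\a)$ one gets
\[
\codim(C)-mt\;\geq\; q\,a(E;A,\oa)-t\,q\val_E(\a)\;=\;q\,a(E;A,\a^t)\;\geq\;a(E;A,\a^t),
\]
the last step using $q\geq 1$ and $a(E;A,\a^t)\geq 0$. Inserting this reduction and keeping track of $q$ fixes your argument.
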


\begin{proof} First we prove the inequality $\geq$.
Let $E$ be a prime divisor over $A$ with the center $\overline{\{\eta\}}$ and let $v=\val_E$.
Let $m=v(\a)$, then there exists a open dense subset $C\subset C_A(v)$
such that $C\subset \cont^m(\a)\cap\psi_{\infty,0}^{-1}(\eta),$ where
$C_A(v)$ is the maximal divisorial set (for definition see, for example, \cite{Ishii}) in $\li(X)$ corresponding to $v$.
This is because the generic point $\alpha\in C_A(v)$ has $\ord_\alpha(\a)=m$ by \cite{DEI}   and
the center of $\alpha$ is $\eta$.
Then $$\ord_E(K_{A'/A})-tv(\a)+1=\codim(C_A(v))-mt$$
$$\geq \codim(\cont^m(\a)\cap
\psi_{\infty,0}^{-1}(\eta))-mt,$$
where $Y\to X$ is a log resolution of $\a$ such that $E$ appears on $Y$.
Here, note that we use the equality $\ord_E(K_{A'/A})+1=\codim(C_A(v))$ proved in \cite{DEI}.
This completes the proof of $\geq$. 

Next we prove the opposite inequality $\leq$. 
We may assume that  
$$\ord_E(K_{A'/A})-t\val_E(\a)+1\geq 0$$ 
for every prime divisor $E$ over $X$ with the center $\overline{\{\eta\}}$,
because otherwise the claimed inequality is trivial.
For an arbitrary $m\in \bN$ take $\zeta\in \cont^m(\a)\cap\psi_{\infty,0}^{-1}(\eta)$ such that 
$\overline{\{\zeta\}}$ is an irreducible component of 
$\overline{\cont^m(\a)\cap{\psi_{\infty,0}^{-1}(\eta)}}$ and
$$\overline{\psi_{\infty, s}(\zeta)}\subset \overline{\psi_{\infty s}({\cont^m(\a)\cap\psi_{\infty,0}^{-1}(\eta)})},
\ \ \ s\geq m$$
gives the codimension of $\cont^m(\a)\cap\psi_{\infty,0}^{-1}(\eta)$.
Then, we have 
$$\overline{\{\zeta\}}=\psi_{\infty, s}^{-1}(\overline{\psi_{\infty, s}(\zeta)}),$$
which is an irreducible cylinder.
Then, a divisorial valuation $v=q \val_E$ over $A$ corresponds to this cylinder
(\cite[Propositions 2.12, 3.10]{DEI}).
Here, we note that $E$ is a prime divisor with the center $\overline{\{\eta\}}$
and $m=q\val_E(\a)$.
By the maximality of $C_A(v)$, we have 
$$\overline{\{\zeta\}}\subset C_A(v).$$
Hence, we have
$$\codim (\cont^m(\a)\cap \psi_{\infty, 0}^{-1}(\eta))-tm \geq \codim C_A(v)-tm$$
$$=q(\ord_E(K_{A'/A})+1)-q\val_E(\a)\geq \ord_E(K_{A'/A})-t\val_E(\a)+1,$$
which gives the inequality $\leq$ in the lemma as required.

\end{proof}

\begin{rem}\label{generic-remark} Let $A$ and $\eta$ be as above.
Let  $\a_i\subset\oa$ $(i=1,\ldots, r)$  be nonzero ideals and $t_i$ $(i=1,\ldots, r)$ non-negative
 real numbers, then the following holds:
\newline
 \vskip.2truecm
 $\mld(\eta; A, \a_1^{t_1}\cdots\a_r^{t_r})$
 $$=\inf\left\{\codim\left(\cont^{m_1}(\a_1)\cap
 \cdots\cap\cont^{m_r}(\a_r)\cap\psi_{\infty,0}^{-1}(\eta)\right)-\sum_i m_it_i\right\}$$
$$=\inf\left\{\codim\left(\cont^{\geq m_1}(\a_1)\cap
 \cdots\cap\cont^{\geq m_r}(\a_r)\cap\psi_{\infty,0}^{-1}(\eta)\right)-\sum_i m_it_i\right\}.$$
Here, the first equality is  proved in the similar way as in Lemma \ref{generic} and
the second equality follows from the same argument as the proof of \cite[Proposition 3.7]{Ishii}.

\end{rem}

\begin{thm}\label{deform-cano}
  Let $\{(X_\tau, \a^t_\tau)\}_{\tau\in T}$ be a deformation of $(X_0,\a^t_0)$.
  Assume $(X_0,\a^t_0)$ is  MJ-canonical at $x\in X_0$.
  Then there are neighborhoods $X^*\subset X$ of $x$ and $T^*\subset T$ of $0$ such that
  $(X^*_\tau, \a^t_\tau|_{X_\tau^*})$ is MJ-canonical for every $\tau\in T^*$.
\end{thm}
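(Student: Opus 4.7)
The plan is to adapt the proof of Theorem \ref{deform-log}, the essential new difficulty being that the converse of Proposition \ref{atx}(ii) fails in general (see the example following it), so a pointwise bound $\hmld(x; X,\a^t) \geq 1$ does not suffice to conclude MJ-canonicity in a neighborhood. First I would reduce to $T$ being a non-singular curve, so that $X_0 = V(f) \subset X$ is cut out by a single regular element $f \in \ox$, with $\dim X_0 = d - 1$. Applying the Strong Inversion of Adjunction (Corollary \ref{stin}(i)) to this codimension-one regular embedding gives
\[
\hmld(W; X_0, \a_0^t) \;=\; \hmld(W; X, \a^t (f))
\]
for every strictly proper closed $W \subset X_0$. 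Since $(X_0,\a_0^t)$ is MJ-canonical at $x$, after shrinking the left-hand side is $\geq 1$ for every strictly proper closed $W \ni x$ in $X_0$. Because every divisor $E$ over $X$ whose center lies in $W \subset V(f)$ satisfies $\val_E(f) \geq 1$, one deduces
\[
\hmld(W; X, \a^t) \;\geq\; \hmld(W; X, \a^t (f)) + 1 \;\geq\; 2,
\]
which controls MJ-discrepancies along all \emph{vertical} centers (those contained in $X_0$).

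The subtle case is to bound $\hmld(W; X,\a^t)$ along \emph{horizontal} centers, namely irreducible strictly proper closed $W \subsetneq X$ with $x \in W$ and $W \not\subset X_0$; such $W$ necessarily dominates the curve $T$. Choosing a log resolution $\varphi \colon Y \to X$ of $(X, \a\jx)$ compatible with the family as in Lemma \ref{generalfiber}, together with its associated open $T_0 \subset T$, every horizontal exceptional divisor $E$ of $\varphi$ restricts for $\tau \in T_0$ to an exceptional divisor $E_\tau$ of $\varphi_\tau \colon Y_\tau \to X_\tau$ satisfying $\ha(E;X,\a^t) = \ha(E_\tau; X_\tau, \a_\tau^t)$ by Lemma \ref{generalfiber}(ii). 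To bound these discrepancies from below I would embed $X$ in the smooth ambient $A = T \times \bA^N$ (cf.\ the remark following Theorem \ref{deform-log}) and convert MJ-discrepancies into usual minimal log discrepancies on the smooth fibers $A_\tau$ via Inversion of Adjunction (Proposition \ref{inversion}): the hypothesis yields $\mld(x; A_0, \widetilde\a_0^t I_{X_0,A_0}^c) \geq 1$, and lower semi-continuity of $\mld$ on a smooth ambient space, applied to the family via the jet-scheme characterization of Lemma \ref{generic}, propagates this to $\mld(y; A_\tau, \widetilde\a_\tau^t I_{X_\tau,A_\tau}^c) \geq 1$ for $(y,\tau)$ close to $(x,0)$. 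Translating back by Inversion of Adjunction then gives $\hmld \geq 1$ along horizontal centers as well.

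Putting the two cases together shows $(X^*, \a^t|_{X^*})$ is MJ-canonical in some open neighborhood $X^* \subset X$ of $x$, and Lemma \ref{generalfiber} then furnishes an open $T^* \ni 0$ such that $(X^*_\tau, \a_\tau^t|_{X^*_\tau})$ is MJ-canonical for every $\tau \in T^*$, exactly as in the last step of the proof of Theorem \ref{deform-log}. The principal obstacle is the horizontal step: transferring the pointwise hypothesis at $x \in X_0$ to a uniform bound along divisors whose centers dominate $T$, which is where one has to leave the MJ-world and invoke the lower semi-continuity of usual minimal log discrepancies on a smooth ambient.
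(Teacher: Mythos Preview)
Your proposal is correct and takes essentially the same route as the paper: reduce to a nonsingular curve base, embed $X$ in the smooth ambient $A=T\times\bA^N$, convert via Inversion of Adjunction to usual $\mld$ on the smooth fibres $A_\tau$, and control the horizontal centers through the jet-scheme description of Lemma~\ref{generic}. The paper packages this as a proof by contradiction (a horizontal $W$ with $\hmld(W;X,\a^t)<1$ forces, via an explicit fibre-dimension comparison for the relative jet schemes, the inequality $\mld(W_0;A_0,\widetilde\a_0^t I_{X_0}^c)<1$), whereas you run the same comparison in the forward direction and call it ``lower semi-continuity''; your explicit treatment of vertical centers is something the paper leaves implicit by simply asserting the offending $W$ must be horizontal. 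One caution: the semi-continuity you invoke is not the standard statement on a fixed smooth variety, since the pair $(A_\tau,\widetilde\a_\tau^t I_{X_\tau}^c)$ varies with $\tau$; what is actually required is exactly the relative jet-scheme dimension estimate the paper writes out, so your pointer to Lemma~\ref{generic} is correct but that step carries the entire content of the horizontal case.
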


\begin{proof}
 As in Theorem \ref{deform-log}, we reduce to the case that $T$ is a non singular curve.
 If the statement does not hold, then there is a horizontal irreducible closed subset $W$ 
 ({\it i.e.,} $W$ dominates $T$ ) such that $x\in W$ and $\hmld(W; X,\a^t)<1$.
 Replacing $X$ by  a small neighborhood of $x$ we can assume that 
  $X\subset T\times \bA^N=A$. 
 Then, by Inversion of Adjunction, we have $\mld(W; A, \widetilde\a^tI_X)<1$,
 where $\widetilde\a\subset \oa$ is an ideal such that $\a=\widetilde\a\ox$.
 %Let $W$ be a minimal such subset and $\eta$ the generic point of $W$.
 Then,
 $$\mld(\eta;A, \widetilde\a^tI_X)<1.$$
 Therefore, there exists a  prime divisor $E$ over $A$
 with the center  $W$ and $a(E; A, \widetilde\a^tI_X)<1$.
 Then, by Lemma \ref{generalfiber}, there is an open dense subset  $T_0\subset T$
 such that 
  \begin{equation}\label{mujun}
  \mld(\eta_\tau^{(i)}; A_\tau,\widetilde\a_\tau^tI_{X_\tau})<1\ \ \mbox{for}\ \   \tau \in T_0
  \end{equation}
where $\eta_\tau^{(i)}$ is the generic point of an irreducible component $W_\tau^{(i)}$ of $W_\tau$.
 \begin{equation}\label{description1}\mld(W_\tau;A_\tau,\widetilde\a_\tau^tI_{X_\tau})=\ \ \ \ \ \ \ \ \ \ \ \ \ \ \ \ \ \ \ \ \ \ \ \ \ \ \ \ \ \ \ \ \ \  \ \ 
 \end{equation}
 $$\hmld(W_\tau; X_\tau,{\widetilde\a_\tau^t})=\inf_{m,n}\{(M+1)N-(m+1)t-(n+1)c $$
$$\ \ \ \ \ \ \ \ \ \ \ \ \ \ \ \ \ \ \ \ - \dim\left(\psi_{Mm}^{-1}(\lm(Z_\tau))
\cap \psi_{Mn}^{-1}(\ln(X_\tau))\cap \psi_{M0}^{-1}(W_\tau)\right)\},$$
where $M=\max\{m, n\}$ and $\psi_{Mn}:\mathcal{L}^M(A)\to\ln(A)$ and so on .
Now, fix $m, n$.
For simplicity let us assume $M=n$. (for the other case $M=m$, the proof is similar).
Let $\ln(X/T)$ be the relative $n$-jet scheme with respect to $\pi:X\to T$.
It is defined as  
$$\ln(X/T):=\pi_n^{-1}(\Sigma^n(T) )\subset \ln(X),$$
where $\pi_n:\ln(X)\to \ln(T)$ is the morphism of $n$-jet schemes induced from $\pi:X\to T$
and $\Sigma^n(T)\subset \ln( T)$ is the locus of trivial $n$-jets on $T$.
Then, note that $\left(\ln(X/T)\right)_\tau=\ln(X_\tau)$.
Denote the canonical projection $\ln(X/T)\to \lm(X/T)$ by $\rho^X_{nm}$, then $\rho^X_{nm}|_{(\ln(X/T))_\tau)}$
is the canonical projection $\ln(X_\tau)\to \lm(X_\tau)$. 

Then the description in (\ref{description1}) is 
$$\mld(W_\tau;A_\tau,\widetilde\a_\tau^tI_{X_\tau})=\inf_{m,n}\{(M+1)N-(m+1)t-(n+1)c $$
$$\ \ \ \ \ \ \ \ \ \ \ \ \ \ \ \ \ \ \ \ - \dim\left(\left((\rho^X_{nm})^{-1}(\lm(Z_\tau))\right)
\cap \ln(X_\tau)\cap (\rho^X_{n0})^{-1}(W_\tau)\right)\}.$$
We  denote the inside of the bracket $\{\ \ \}$ of the right hand side by 
$$(M+1)N-(m+1)t-(n+1)c - R_{n,m,\tau}.$$ 
Let 
$$\mathcal R:=(\rho^X_{nm})^{-1}(\lm(Z/T))
\cap \ln(X/T)\cap (\rho^X_{n0})^{-1}(W)$$
and consider the restricted morphism $\rho:   \mathcal R\to W$
of $\rho^X_{n0}:\ln(X/T)\to X$.

Here, note that $R_{n,m,\tau}=\dim \rho^{-1}(W_\tau)$ for every $\tau\in T$.
Assume $\dim W=s$, then $\dim W_\tau=s-1$ since $T$ is a non singular curve and therefore
$W_\tau$ is a hypersurface in $W$.
Therefore 
$$R_{n,m,0}=\dim \rho^{-1}(W_0)\geq \dim \rho^{-1}(y)+s-1$$ 
for general closed point $y\in W$.
Take $\tau\in T$ such that  $y\in W_\tau^{(i)}\subset W_\tau$, then  
$$ \dim \rho^{-1}(y)+s-1=\dim \overline{\rho^{-1}(\eta_\tau^{(i)})}$$
Noting that $$\mld(\eta_\tau^{(i)}; A_\tau, \widetilde\a I_{X_\tau})=
\inf_{n,m}\{(M+1)N-(m+1)t-(n+1)c-\dim \overline{\rho^{-1}(\eta_\tau^{(i)})}\}$$
by Lemma \ref{generic}.
From (\ref{mujun}) we obtain
$$1\leq \mld(W_0;A_0,\widetilde\a_0I_{X_0})\leq \mld(\eta_\tau^{(i)}; A_\tau, \widetilde\a I_{X_\tau})
< 1,$$
which is a contradiction.
\end{proof}

As a corollary, we obtain a sufficient condition for a hypersurface singularity
not to be MJ-log canonical or MJ-canonical.
Terminologies ``non degenerate", ``Newton polygon" in the corollary can be referred in \cite{jalg}.

\begin{cor}\label{degenerate}%***(degenerate)
 Let $(X, 0)\subset (\bA^{d+1},0)$ be a reduced hypersurface singularity defined by an
 equation $f=0$.
Denote the Newton polygon of $f$ in $ \bR^{d+1}$ by $\Gamma(f)$. 
Then the following hold:
\begin{enumerate}
\item[(i)] If  
 ${\bf 1}=(1,\ldots, 1)\not\in \Gamma(f)$, then $(X,0)$ is not MJ-log canonical.
 \item[(ii)]
 If  
 ${\bf 1}=(1,\ldots, 1)\not\in \Gamma(f)^0$, then $(X,0)$ is not MJ-canonical.
 Here, $\Gamma(f)^0$ means the interior of $\Gamma(f)$.
\end{enumerate}
\end{cor}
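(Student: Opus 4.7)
The plan is to work on the ambient affine space via Inversion of Adjunction, write down an explicit toric divisor arising from the Newton polyhedron, and use a short combinatorial observation to show that the relevant coordinate subspace is contained in $X$, so that Inversion of Adjunction at the generic point of that subspace produces an exceptional divisor over $X$ whose center contains $0$ and whose MJ-log discrepancy violates the bound for MJ-log canonicity (resp.\ MJ-canonicity).

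First, Proposition~\ref{inversion} gives $\hmld(W;X,\ox)=\mld(W;\bA^{d+1},(f))$ for every strictly proper closed subset $W\subset X$, and the same jet-scheme/contact-locus proof extends to give the analogous equality at the generic point of any closed irreducible proper subscheme. For a primitive $w\in\bZ_{\geq 0}^{d+1}\setminus\{0\}$ with support $I=\{i:w_i>0\}$, the toric prime divisor $E_w$ obtained by adding the ray $w$ to the fan of $\bA^{d+1}$ satisfies $\ord_{E_w}(K_{Y/\bA^{d+1}})=|w|-1$ and $\val_{E_w}(f)=h_f(w):=\min_{v\in\Gamma(f)}w\cdot v$. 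The second equality (not merely an inequality) holds because the minimum of a linear functional on $\Gamma(f)$ is attained at a vertex of $\Gamma(f)$, and every vertex of $\Gamma(f)$ is the exponent of an actual monomial of $f$. Hence
\[
a(E_w;\bA^{d+1},(f))=w\cdot\mathbf{1}-h_f(w),
\]
and the center of $E_w$ on $\bA^{d+1}$ is the coordinate subspace $Z_w=V(x_i:i\in I)\ni 0$.

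Next, choose $w$ by the separating/supporting hyperplane theorem. In case (i), the hypothesis $\mathbf{1}\notin\Gamma(f)$ produces $w_0\in\bR_{\geq 0}^{d+1}\setminus\{0\}$ (non-negativity is forced by the recession cone $\bR_{\geq 0}^{d+1}\subset\Gamma(f)$) with $w_0\cdot\mathbf{1}<h_f(w_0)$; density of rationals and clearing denominators yields a primitive $w\in\bZ_{\geq 0}^{d+1}$ satisfying the strict inequality, so $a(E_w)<0$. In case (ii), the hypothesis $\mathbf{1}\notin\Gamma(f)^0$ similarly yields primitive $w$ with $w\cdot\mathbf{1}\leq h_f(w)$ (by using case (i) if $\mathbf{1}\notin\Gamma(f)$, and a supporting hyperplane at $\mathbf{1}\in\partial\Gamma(f)$ otherwise), so $a(E_w)\leq 0<1$. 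The crucial combinatorial step is then $Z_w\subset X$: every monomial $x^v$ of $f$ satisfies $w\cdot v\geq h_f(w)\geq w\cdot\mathbf{1}=\sum_{i\in I}w_i>0$, which together with $w_j=0$ for $j\notin I$ forces $v_i>0$ for some $i\in I$; hence $f\in(x_i:i\in I)$, so $f$ vanishes on $Z_w$.

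Finally, applying Inversion of Adjunction at the generic point $\eta$ of $Z_w$ gives
\[
\hmld(\eta;X,\ox)=\mld(\eta;\bA^{d+1},(f))\leq a(E_w),
\]
so there exists a prime divisor $F$ over $X$ with center equal to $Z_w$ and $\ha(F)\leq a(E_w)$. Since $Z_w\ni 0$, the divisor $F$ obstructs MJ-log canonicity of $X$ in every neighborhood of $0$ in case (i) (where $\ha(F)<0$), and obstructs MJ-canonicity in every neighborhood of $0$ in case (ii) (where $\ha(F)<1$). The main obstacle is the case when $\mathbf{1}$ lies on a non-compact face of $\Gamma(f)$, where the supporting hyperplane normal is forced to have some zero entries so that $Z_w$ has positive dimension rather than being just $\{0\}$; the combinatorial lemma $Z_w\subset X$ --- which ultimately depends on $\mathbf{1}\in\bZ_{>0}^{d+1}$ forcing $w\cdot\mathbf{1}=\sum_{i\in I}w_i>0$ --- is precisely what converts this apparent obstruction into the vehicle for producing the required divisor on $X$ via the generic-point form of Inversion of Adjunction.
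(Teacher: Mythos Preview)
Your argument is correct and takes a genuinely different route from the paper's. The paper argues by deformation: it perturbs the coefficients of $f$ to a non-degenerate $f_\epsilon$ with the same Newton polyhedron, invokes the characterization of (log) canonicity for non-degenerate hypersurfaces from \cite{jalg}, and then uses the stability of MJ-(log) canonicity under deformation (Theorems~\ref{deform-log} and~\ref{deform-cano}) to transfer the conclusion back to $f$. You instead produce an explicit toric valuation $E_w$ on $\bA^{d+1}$ with $a(E_w;\bA^{d+1},(f))<0$ (resp.\ $\leq 0$), show that its center $Z_w$ lies inside $X$, and pull the bound down to $X$ via Inversion of Adjunction at the generic point of $Z_w$ (the generic-point version being the content of Lemma~\ref{generic} and Remark~\ref{generic-remark} on the ambient side). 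Your approach is more self-contained, requiring neither the deformation theorems nor the external input from \cite{jalg}; the paper's argument, by contrast, is placed in this section precisely as an application of the deformation results just proved.

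One small point deserves tightening: you should dispose of the possibility $|I|=1$ in case~(ii), where $Z_w=V(x_i)$ has dimension $d$ and, being contained in $X$, is an entire irreducible component rather than a strictly proper closed subset, so Inversion of Adjunction at its generic point does not apply. In case~(i) this is excluded automatically, since $w=e_i$ with $w\cdot\mathbf{1}<h_f(w)$ would force $x_i^2\mid f$, contradicting reducedness. In case~(ii) with $|I|=1$ one has $x_i\mid f$; assuming $0$ is an actual singular point this makes $X$ non-normal at $0$, and Proposition~\ref{cano=normal} finishes the argument directly.
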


\begin{proof}
  It is known that the statements hold for non-degenerate $f$ (see \cite[Corollary 1.7]{jalg}), since in this case MJ-canonical 
  (resp. MJ-log canonical) is equivalent to canonical (resp. log canonical ) in the usual sense.
  Let $f$ be possibly degenerate and  assume ${\bf 1}\not\in \Gamma(f)$.
  Perturb the coefficients of $f$ to obtain $f_{\epsilon}$ with $\Gamma(f_{\epsilon})=
  \Gamma(f)$. Let $\epsilon\in T:=\bA^r$ and $f=f_0$.
  Then $f_\epsilon$ $(\epsilon\in T)$ gives a deformation of hypersurfaces $X_\epsilon$.
  Then for general $\epsilon$, $f_\epsilon$ is non degenerate, therefore  ${\bf 1}\not\in \Gamma(f_\epsilon)$ implies that $X_\epsilon$ is not log canonical.
  Hence, $X_0=X$ is not MJ-log canonical by Corollary \ref{deform-log}.
    For the statement of MJ-canonical follows by using Theorem \ref{deform-cano} in the 
  similar way as above.
\end{proof}

\begin{prop}[Lower semi continuity of MJ-minimal log discrepancy]\label{lowsc}
 Let $\{(X_\tau, \a^t_\tau)\}_{\tau\in T}$ be a deformation of $(X_0,\a^t_0)$ and let 
 $\pi:X\to T$ be the morphism giving the deformation.
 Let $\sigma: T\to X$ be a section of $\pi$.
 Then, the map $T\to \bR, \tau \mapsto \hmld(\sigma(\tau), X_\tau, \a^t_\tau)$ is lower 
 semi continuous.
\end{prop}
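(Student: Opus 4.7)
The plan is to adapt the strategy of the proof of Theorem~\ref{deform-cano} to prove lower semi continuity at an arbitrary threshold rather than only at the MJ-canonical threshold $1$.

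First I would reduce to the case where $T$ is a smooth affine curve, since lower semi continuity of a real-valued function on $T$ can be tested along curves through any point. After shrinking, embed $X \hookrightarrow A := T \times \bA^N$ and lift $\a$ to an ideal $\widetilde{\a} \subset \o_A$; let $c = N - d$ be the common codimension of $X_\tau$ in $A_\tau := \{\tau\}\times \bA^N$. By Inversion of Adjunction (Proposition~\ref{inversion}) applied fiberwise,
$$F(\tau) := \hmld(\sigma(\tau); X_\tau, \a_\tau^t) = \mld(\sigma(\tau); A_\tau, \widetilde{\a}_\tau^t I_{X_\tau}^c),$$
so it suffices to prove that $F$ is lower semi continuous at an arbitrary $\tau_0 \in T$.

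Suppose, for contradiction, that there exist $\gamma \in \bR$ with $F(\tau_0) > \gamma$ and a sequence $\tau_n \to \tau_0$ with $F(\tau_n) \leq \gamma$. By Proposition~\ref{description}, for each $n$ there exist $m_n, k_n \in \bZ_{\geq 0}$ (set $M_n = \max\{m_n, k_n\}$) with
$$(M_n+1)N - (m_n+1)t - (k_n+1)c - \dim S_{m_n, k_n, \tau_n} \leq \gamma,$$
where $S_{m, k, \tau} := \psi_{M, m}^{-1}(\mathcal{L}_m(Z_\tau)) \cap \psi_{M, k}^{-1}(\mathcal{L}_k(X_\tau)) \cap \psi_{M, 0}^{-1}(\sigma(\tau))$ and $Z_\tau \subset A_\tau$ is the zero locus of $\widetilde{\a}_\tau$. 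As in the proof of Theorem~\ref{deform-cano}, each $S_{m, k, \tau}$ is the fiber over $\tau$ of a constructible subset of the relative jet scheme $\mathcal{L}_M(A/T)$, so upper semi continuity of fiber dimensions yields that each function $f_{m, k}(\tau) := (M+1)N - (m+1)t - (k+1)c - \dim S_{m, k, \tau}$ is lower semi continuous in $\tau$. In particular, if the sequence $(m_n, k_n)$ were bounded, then after passing to a constant subsequence we would obtain $f_{m, k}(\tau_0) \leq \gamma$, forcing $F(\tau_0) \leq \gamma$ and contradicting the choice of $\gamma$.

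The main obstacle is therefore to bound $(m_n, k_n)$ uniformly. This amounts to showing that computing $F(\tau)$ to within any fixed tolerance requires only finitely many $(m, k)$, uniformly in $\tau$ close to $\tau_0$, an analogue in the family setting of the key technical step in the Ein-Mustata-Yasuda proof of lower semi continuity for the usual $\mld$ on smooth varieties. The required bound should follow from the fact that the codimensions of the contact loci $\cont^{\geq m}(\widetilde{\a}_\tau)$ and $\cont^{\geq k}(I_{X_\tau})$ grow at least linearly with slopes bounded below uniformly in $\tau$ near $\tau_0$, by generic flatness of the family; then $f_{m, k}$ with $m$ or $k$ larger than a bound depending only on $\gamma$ and $\tau_0$ is already $> \gamma$ for $\tau$ near $\tau_0$, giving the desired contradiction.
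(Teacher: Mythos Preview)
Your setup is sound and you correctly isolate the central difficulty: an infimum of lower semicontinuous functions need not be lower semicontinuous, so one must control the indices $(m,k)$. However, your proposed resolution does not work. Take $\a=\ox$ for simplicity: the codimension of $\cont^{\geq k+1}(I_{X_\tau})$ over the point $\sigma(\tau)$ grows in $k$ with slope at most $c$ (equality holds precisely when $X_\tau$ is smooth there), so after subtracting $(k+1)c$ the quantity $f_{0,k}(\tau)$ does not tend to $+\infty$; its infimum over $k$ is precisely $F(\tau)$, which is $\leq\gamma$ at $\tau_n$ by hypothesis. Thus there is no reason the witnesses $(m_n,k_n)$ should be bounded, and if $F(\tau_n)=-\infty$ (allowed by the convention in force) they certainly are not. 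Generic flatness only says that for each fixed $(m,k)$ the fiber dimension is constant on some dense open of $T$; it does not produce a single open working for all $(m,k)$, which is what a uniform bound would require.

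The paper bypasses this by a different mechanism. First, a simultaneous log resolution of the total family (Lemma~\ref{generalfiber}) shows that $F$ is constant on a dense open $T^*\subset T$. Then, for each pair $(n,m)$, lower semicontinuity of the single function $d_{nm}$ yields a dense open $U_{nm}\subset T^*$ on which $d_{nm}(0)\leq d_{nm}(\tau)$. The decisive step is that the base field $k$ is assumed uncountable (a standing hypothesis from the beginning of \S2), so the countable intersection $\bigcap_{n,m}U_{nm}$ is nonempty; any $\tau$ in it satisfies $d_{nm}(0)\leq d_{nm}(\tau)$ for every $(n,m)$, whence $F(0)=\inf_{n,m}d_{nm}(0)\leq\inf_{n,m}d_{nm}(\tau)=F(\tau)$. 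Uncountability of $k$ is doing real work here and substitutes for the uniform index bound you sought.
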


\begin{proof}
For the statement of the proposition, we may assume that $T$ is irreducible.
We use the same notation as in the proof of Theorem \ref{deform-cano}.
First note that there is a non-empty open subset $T^*\subset T$ 
such that $\hmld(\sigma(\tau), X_\tau, \a^t_\tau)$ is constant for all $\tau\in T^*$.
This is proved as follows: Take a log resolution $\varphi: Y\to X$ of $(X, \a\j_X\j_{X/T}I_\Sigma)$,
where $I_\Sigma$ is the defining ideal of the section $\Sigma:=\im \sigma$. 
Then, by Lemma \ref{generalfiber}, there exists a non empty open  subset $T^*\subset T$
such that for every $\tau \in T^*$ the restriction $\varphi_\tau: Y_\tau \to X_\tau$ is 
a log resolution of $(X_\tau, \a_\tau \j_{X_\tau}{\frak m}_{X_\tau, \sigma(\tau)})$
and $$(\hk_{Y/X}-J_{Y/X}-tZ)|_{Y_\tau}=\hk_{{Y_\tau}/X_{\tau}}-J_ {{Y_\tau}/X_{\tau}}
-tZ_\tau,$$
where $\a\oy=\oy(-Z)$ and $\a_\tau\o_{Y_\tau}=\o_{Y_\tau}(-Z_\tau)$.
Now take an exceptional prime divisor $E$ over $X|_{T^*}$ with the center $\Sigma$, then $E_\tau$ is the disjoint sum of non singular exceptional divisors $E_\tau^{(i)}$ with the center $\sigma(\tau)$ and 
$$\ord_E(\hk_{Y/X}-J_{Y/X}-tZ)=\ord_{E_\tau^{(i)}}(\hk_{{Y_\tau}/X_{\tau}}-J_ {{Y_\tau}/X_{\tau}}-Z_{\tau}).$$
Hence, the constancy of the MJ-minimal log discrepancy follows as required.

For the lower semi continuity of MJ-minimal log discrepancy follows just by showing
\begin{equation}\label{semiconti} \hmld(\sigma(0), X_0, \a^t_0)\leq \hmld(\sigma(\tau), X_\tau, \a^t_\tau)
\end{equation}
for some $\tau\in T^*$.

As in the same way to get (\ref{description1}) in the proof of Theorem \ref{deform-cano},
we obtain
$$\hmld(\sigma(\tau); X_\tau,{\widetilde\a_\tau^t})=\inf_{m,n}\{(M+1)N-(m+1)t-(n+1)c $$
$$\ \ \ \ \ \ \ \ \ \ \ \ \ \ \ \ \ \ \ \ - \dim\left(\psi_{Mm\tau}^{-1}(\lm(Z_\tau))
\cap \psi_{Mn\tau}^{-1}(\ln(X_\tau))\cap \psi_{M0\tau}^{-1}(\sigma(\tau))\right)\},$$
where $M=\max\{m, n\}$ and $ \psi_{mn\tau}:\lm(A_\tau)\to \ln(A_\tau)$ is the canonical
projection.
For simplicity, let us assume $M=n$. (For the other case $M=m$, the proof is the 
same).
Then the scheme $ \psi_{Mm}^{-1}(\lm(Z_\tau))
\cap \psi_{Mn}^{-1}(\ln(X_\tau))\cap \psi_{M0}^{-1}(\sigma(\tau))$
is the fiber of the point $\sigma(\tau)$ by the canonical projection
$$\rho_{nm}: \mathcal{W}_{nm}:=\psi_{nm}^{-1}(\lm(Z))\cap\ln(X/T)\to \Sigma\simeq T,$$
where $ \psi_{nm}:\ln(A)\to \lm(A)$ is the canonical
projection.

Here, note that the space $\mathcal{W}_{nm}$ is $\bG_m$-invariant and
 also the subspace $S_r:=\{Q\in \mathcal{W}_{nm}\mid
 \dim  \rho_{nm}^{-1}\rho_{nm}(Q)\geq r\}$ is 
$\bG_m$-invariant for every $r\in \bN$.
For every $r\in \bN$, the subset $S_r$ is known to be a closed subset 
(cf., for example, \cite[Chapter 1, \S 8]{mu}).
Therefore by 
\cite[Proposition 3.2]{cr}, 
$$\{\tau\in T\mid \dim \rho_{nm}^{-1}(\tau)\geq r\}=\rho_{nm}(S_r)$$
is a closed subset of $T$.
Therefore, for fixed $m, n\in \bN$ \\
$$\tau \mapsto d_{nm}(\tau):=(M+1)N-(m+1)t-(n+1)c
 - \dim \rho_{nm}^{-1}(\tau)$$
is lower semi continuous.
Therefore, there is a non empty open subset $U_{nm}\subset T^*$ such that
$d_{nm}(0)\leq d_{nm}(\tau)$ for all $\tau\in U_{nm}$.
As $k$ is uncountable, $\bigcap_{nm}U_{nm}\neq \emptyset$ which completes the
proof of (\ref{semiconti}).

\end{proof}

\section{Low dimensional MJ-singularities}

In this section we determine MJ-canonical and MJ-log canonical singularities 
of dimension 1 and 2.

\begin{prop}\label{1dim}
Let $(X,x)$ be a singularity on a one-dimensional reduced scheme. Then the following hold:
\begin{enumerate}
\item[(i)] $(X,x)$ is MJ-canonical if and only if it is non singular.
\item[(ii)] $(X,x)$ is MJ-log canonical if and only if it is non singular or ordinary node.
\end{enumerate}
\end{prop}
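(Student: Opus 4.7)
The plan for part (i) is essentially immediate. The ``if'' direction needs no work: at a non singular point both $\hk_{Y/X}$ and $J_{Y/X}$ vanish near $x$, so every exceptional MJ-log discrepancy is at least $1$. For the converse, I would apply Proposition \ref{emb}: MJ-canonicity at $x$ forces $\emb(X,x)\leq 2d-1=1$, and since $\emb(X,x)\geq \dim X=1$, equality holds, so $X$ is smooth at $x$.

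For part (ii), the ``if'' direction again handles the smooth case trivially. For the ordinary node, the completion of $\ox$ at $x$ is isomorphic to $k[[y_1,y_2]]/(y_1y_2)$, so after passing to an étale neighborhood we may write $X\subset\bA^2$ as the hypersurface $f=y_1y_2$. Inversion of Adjunction (Proposition \ref{inversion}) with $c=1$ and $\widetilde\a=\o_A$ then gives
\[
\hmld(x;X,\ox)=\mld(x;\bA^2,(y_1y_2))=0,
\]
the last equality because $(y_1y_2)$ is a simple normal crossing divisor on $\bA^2$; so $X$ is MJ-log canonical at $x$.

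For the ``only if'' direction of (ii), Proposition \ref{emb} gives $\emb(X,x)\leq 2d=2$. If $\emb(X,x)=1$, then $X$ is non singular at $x$. If $\emb(X,x)=2$, then in a suitable étale (or formal) neighborhood $X\subset \bA^2$ is defined by a single reduced equation $f$ with $\mult_0 f\geq 2$. Proposition \ref{inversion} translates the MJ-log canonicity into the classical inequality $\mld(0;\bA^2,(f))\geq 0$, i.e., the pair $(\bA^2,(f))$ is log canonical at the origin. The final step is the standard classification of reduced plane-curve singularities $(f)$ that make $(\bA^2,(f))$ log canonical, which I would recover by direct blow-up computations: for the blow-up $\pi$ of the origin with exceptional curve $E$,
\[
a(E;\bA^2,(f))=\ord_E(K_{\pi})-\val_E(f)+1=2-\mult_0 f,
\]
so $\mult_0 f\leq 2$ is necessary, and in the remaining case $\mult_0 f=2$ I would argue that if the initial form $\int f$ is a square $\ell^2$ of a linear form, then the strict transform of $V(f)$ is tangent to $E$, and a further blow-up at that point produces an exceptional divisor of negative log discrepancy, while if $\int f$ splits as a product of two distinct linear forms then $\pi$ already resolves $V(f)$ into a simple normal crossing configuration with all log discrepancies $\geq 0$. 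The latter case is precisely the ordinary node.

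The main obstacle is only this last classification of log canonical plane-curve singularities, and it is a one-blow-up verification; the real content of the proposition is packaged into the embedding-dimension bound of Proposition \ref{emb} and the Inversion of Adjunction of Proposition \ref{inversion}.
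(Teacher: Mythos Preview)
Your argument is correct and, for part (ii), more self-contained than the paper's. The paper disposes of (i) exactly as you do (normality or the embedding-dimension bound of Proposition~\ref{emb}), and for (ii) it simply quotes two external results: that a singular point with $\hmld(x;X,\ox)\geq 0$ must have $\hmld(x;X,\ox)=0$ \cite[Corollary~3.15]{Ishii}, and that one-dimensional points with this value are classified as ordinary nodes in \cite{ir}. You instead unwind these citations via Inversion of Adjunction and a direct blow-up analysis of $(\bA^2,(f))$, which is a genuine alternative and has the advantage of being readable without the references. One small imprecision: in the case $\int f=\ell^2$, a \emph{single} further blow-up does not always yield a negative discrepancy---for the cusp $y_1^2+y_2^3$ the second exceptional divisor still has log discrepancy $0$ and a third blow-up is needed---so you should either say ``after finitely many further blow-ups'' or, cleaner, reduce via Weierstrass preparation to $f=y_1^2+g(y_2)$ with $\mult_0 g\geq 3$ and read off $\lct=\tfrac{1}{2}+\tfrac{1}{\mult_0 g}<1$ from the Newton polygon.
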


\begin{proof}
It is clear that a non singular point is MJ-canonical. 
On the contrary if $(X,x)$ is MJ-canonical, then it must be normal by Proposition 
\ref{cano=normal}. 
We can  see the non singularity of $(X,x)$ also by 
$\emb\leq 2\dim X-1=1$ (Proposition \ref{emb})

For (ii), assume $(X,x)$ is singular, then it is MJ-log canonical if and only if 
$\hmld(x;X,\ox)=0$ by \cite[Corollary 3.15]{Ishii} and it is equivalent to 
that $(X,x)$ is ordinary node by \cite{ir}.
\end{proof}
\begin{exmp}\label{duboiscurve}
 It is known that the union of the three axes  in the 3-dimensional affine space
is a Du Bois curve. 
But it is not an  MJ-log canonical curve by Theorem \ref{1dim}, (ii).
\end{exmp}

\begin{thm}\label{rdp}
Let $(X,x)$ be a singularity on 2-dimensional reduced scheme. Then $(X,x)$ is MJ-canonical
if and only if it is non singular or rational double.
\end{thm}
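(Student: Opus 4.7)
The plan is to combine three ingredients already established in the paper: the fact that MJ-canonical forces normality (Proposition \ref{cano=normal}), the embedding dimension bound (Proposition \ref{emb}), and the identification of MJ-discrepancies with usual discrepancies for normal locally complete intersection varieties (Remark \ref{note-on-a}). This reduces the theorem to the classical classification of canonical surface singularities.

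For the ``if'' direction, the non-singular case is immediate because $\hk_{Y/X}-J_{Y/X}=0$ when $X$ is smooth. For a rational double point $(X,x)$, a local analytic description exhibits $X$ as a hypersurface in a smooth $3$-fold, so in particular $X$ is normal and locally a complete intersection; by Remark \ref{note-on-a}, $\ha(E;X,\ox)=a(E;X,\ox)$ for every prime divisor $E$ over $X$, and so MJ-canonicity coincides with the usual canonicity. By the classical theory (Artin, Du Val), rational double points are exactly the canonical surface singularities, so the inequality $a(E;X,\ox)\geq 1$ holds for every exceptional $E$.

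For the ``only if'' direction, suppose $(X,x)$ is MJ-canonical. First, Proposition \ref{cano=normal} tells us that $X$ is normal at $x$. Next, Proposition \ref{emb} gives $\emb(X,x)\leq 2d-1=3$. If $\emb(X,x)=2$, then the Zariski tangent space has dimension equal to $\dim X$, so $(X,x)$ is smooth. If $\emb(X,x)=3$, then in a neighborhood of $x$ we may embed $X\hookrightarrow \bA^3$ as a hypersurface; since $X$ is normal and a hypersurface, it is a normal local complete intersection. Applying Remark \ref{note-on-a} again, MJ-canonicity at $x$ is equivalent to being canonical at $x$ in the usual sense, and by the classical classification of canonical surface singularities this is exactly the class of rational double points.

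The potential obstacle is purely of a bookkeeping nature: verifying that the embedding-dimension bound genuinely produces a hypersurface (rather than something with fewer equations or embedded structure). This is clean because $X$ is reduced and equidimensional of dimension $2$, so near a point with $\emb(X,x)=3$ the local ring $\o_{X,x}$ has the form $\o_{\bA^3,x}/(f)$ with $f$ a non-zero-divisor. No further analysis is needed: once the problem is translated from the MJ-setting to the classical setting via Remark \ref{note-on-a}, the theorem follows from the well-known Du Val classification.
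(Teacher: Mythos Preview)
Your proof is correct, and the ``if'' direction matches the paper's argument exactly. For the ``only if'' direction, however, you take a genuinely different route from the paper.

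The paper does not use the embedding-dimension bound of Proposition~\ref{emb} at all. Instead, it invokes the classification from \cite{ir} of two-dimensional singularities satisfying $\hmld(x;X,\ox)\geq 1$: these are precisely the non-singular points, rational double points, normal crossing double points, and pinch points. Then Proposition~\ref{cano=normal} (normality) eliminates the last two non-normal types, leaving only non-singular and rational double.

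Your approach is more self-contained relative to this paper: it avoids the external classification result of \cite{ir} and instead combines Proposition~\ref{cano=normal} and Proposition~\ref{emb} to force $(X,x)$ to be a normal hypersurface in $\bA^3$, after which Remark~\ref{note-on-a} translates MJ-canonicity into ordinary canonicity and the classical Du Val classification finishes the job. The paper's approach is terser because the heavy lifting is packaged into the citation of \cite{ir}, but your argument has the advantage of not depending on that separate classification and of illustrating directly how the embedding-dimension bound does real work here.
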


\begin{proof}
First note that for a complete intersection singularity, canonicity and MJ-canonicity are
equivalent.
As a 2-dimensional rational double point $(X,x)$ is a hypersurface singularity and 
canonical, therefore
it is MJ-canonical.
Conversely, if $(X,x)$ is MJ-canonical, then $\hmld(x;X,\ox)\geq 1$.
Such singularities are classified in \cite{ir} to be non singular or rational double or normal crossing double or a pinch point.
As an MJ-canonical singularity is normal by Proposition \ref{cano=normal}, only
rational double points among them can be MJ-canonical.
\end{proof}

Next we will characterize MJ-log canonical singularities of dimension 2.
By Proposition \ref{emb}, for an MJ-log canonical singularity $(X,x)$ of dimension 2,
we have $$\emb(X,x)\leq 4.$$ 
First we will determine the case $\emb(X,x)=3$.
Many of the singularities listed in the following theorem can be observed to be MJ-log canonical singularities by the calculation in \cite{ku}.
But we give a self contained proof below.

\begin{thm}\label{hyp3}
Let $(X,0)$ be a singularity on a 2-dimensional reduced scheme with  $\emb(X, 0)=3$.  
Then, 
$(X, 0)$ is an MJ-log canonical singularity  if and only if   $X$ is defined by 
$f(x,y,z)\in k[[x,y,z]]$ as follows:  
\begin{enumerate}
\item[(i)] $\mult_0f=3$ and the projective tangent cone of $X$ at 0 is a reduced curve with at worst ordinary nodes. 
\item[(ii)] $\mult_0 f=2$  
\begin{enumerate}
\item $f=x^2+y^2+g(z)$, $\deg g\geq 2$.  
\item $f=x^2+g_3(y,z)+g_4(y,z) $, $\deg g_i\geq i$, $g_3$ is homogeneous of degree 3 and $g_3\neq l^3 $ ($l$ linear)  
\item $f=x^2+y^3+yg(z)+h(z)$, $\mult_0g\leq 4$ or $\mult_0h\leq 6$.  
\item $f=x^2+g(y,z)+h(y,z)$, $g$ is homogeneous of degree 4 and it does not have a linear factor with multiplicity more than 2.  
\end{enumerate}
\end{enumerate}
\end{thm}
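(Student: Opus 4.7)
The plan is to reduce MJ-log canonicity of the hypersurface $X=V(f)\subset\bA^3$ to a classical log canonical threshold computation via Inversion of Adjunction, and then proceed by case analysis on $\mult_0 f$ and, when $\mult_0 f=2$, on the rank of the quadratic part of $f$. Since $\emb(X,0)=3$ and $\dim X=2$, locally $X$ is a codimension-one hypersurface, so Proposition~\ref{inversion} gives $\hmld(0;X,\ox)=\mld(0;\bA^3,(f))$; hence $(X,0)$ is MJ-log canonical iff $\lct_0(f)\ge 1$. The elementary bound $\lct_0(f)\le 3/\mult_0 f$ together with $\mult_0 f\ge 2$ (forced by $\emb=3$) restricts us to $\mult_0 f\in\{2,3\}$.

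Assume first $\mult_0 f=3$. Blow up the origin $\pi\colon Y\to\bA^3$, with exceptional $E\cong\bP^2$. From $K_{Y/\bA^3}=2E$ and $\pi^*V(f)=\widetilde{V(f)}+3E$ one gets $\pi^*(K_{\bA^3}+V(f))=K_Y+\widetilde{V(f)}+E$, so $(\bA^3,V(f))$ is lc at $0$ iff $(Y,\widetilde{V(f)}+E)$ is lc along $E$. Since $E$ is smooth with coefficient one, adjunction on $E$ reduces this to lc of $(\bP^2,V(g_3))$ with coefficient one, where $g_3$ is the leading form. A reduced plane cubic with this coefficient is lc iff its singularities are at worst ordinary nodes, giving (i); non-reduced components, cusps, triple points or tacnodes all violate lc.

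Now let $\mult_0 f=2$. The formal splitting lemma gives $\hat f\simeq q_r(y_1,\ldots,y_r)+\phi(y_{r+1},\ldots,y_3)$, where $q_r$ is a non-degenerate quadratic in $r=\operatorname{rank}(q_2)$ variables and $\phi\in(y_{r+1},\ldots,y_3)^3$. Cases $r=3$ and $r=2$ both produce the normal form $f=x^2+y^2+g(z)$, $g\in(z)^2$: this is an $A_{n-1}$ singularity, canonical and hence MJ-log canonical by Remark~\ref{note-on-a}, yielding (a). For $r=1$, $f\sim x^2+\phi(y,z)$ with $\phi\in(y,z)^3$; the Thom--Sebastiani formula gives $\lct_0(f)=\min(1,\tfrac12+\lct_0(\phi))$, so MJ-lc is equivalent to $\lct_0(\phi)\ge\tfrac12$, and the bound $\lct_0(\phi)\le 2/\mult_0\phi$ restricts to $\mult_0\phi\in\{3,4\}$. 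If $\mult_0\phi=4$ the blowup of $0\in\bA^2$ has exceptional of log-discrepancy $2-4\cdot\tfrac12=0$, and lc is equivalent to the binary quartic $g_4$ on $\bP^1$ having no linear factor of multiplicity $>2$, giving (d). If $\mult_0\phi=3$ with $g_3\ne l^3$, a single blowup produces an snc pair with coefficients $\le 1$ (each factor of $g_3$ has multiplicity at most $2$), so $\lct_0(\phi)\ge\tfrac12$ regardless of higher-order terms: this is (b). If $\mult_0\phi=3$ with $g_3=l^3$, setting $l=y$ and applying a Tschirnhaus substitution to kill the $y^2$-term yields $\phi=y^3+yg(z)+h(z)$; the weighted blowup of $\bA^3$ with weights $(3,2,1)$ on $(x,y,z)$ produces an exceptional divisor $E_w$ with $\ord_{E_w}K=5$ and $\val_{E_w}(f)=\min(6,2+\mult_0 g,\mult_0 h)$, and the log-discrepancy analysis leads precisely to $\mult_0 g\le 4$ or $\mult_0 h\le 6$, which is (c).

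The main obstacle is the case $g_3=l^3$: the weighted blowup gives only an upper bound on $\lct_0(f)$, and equality requires Newton non-degeneracy of $f$ relative to the weights $(3,2,1)$. On the boundary (for instance $\mult_0 g=4$ and $\mult_0 h=6$ with special coefficients on the initial weighted face) non-degeneracy can fail, and one must either verify by hand that the partials of the weight-six part have no common zero on the torus, or argue by deformation---perturbing $f$ within its Newton polyhedron to a non-degenerate member and invoking the deformation stability of MJ-log canonicity (Theorem~\ref{deform-log}) together with the necessity direction of Corollary~\ref{degenerate}. The remaining subcases are settled by routine resolution and snc verifications.
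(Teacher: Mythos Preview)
Your reduction to $\lct_0(f)\ge 1$ via inversion of adjunction, followed by Thom--Sebastiani to split off $x^2$ and reduce cases (b)--(d) to the plane-curve inequality $\lct_0(\phi)\ge\tfrac12$, is a clean and genuinely different route from the paper's. The paper works throughout with the Mather--Jacobian discrepancy of $X$ (Corollary~\ref{blowup} and Corollary~\ref{stin}) and relies on \cite{ir} for the subcases (a), (b) and the strictly-canonical part of (c); your approach avoids those citations and makes the two-variable reduction explicit.

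There is, however, a real gap in case (c). The exceptional divisor $E_w$ of the $(3,2,1)$-weighted blowup satisfies
\[
a(E_w;\bA^3,f)\;=\;6-\val_{E_w}(f)\;=\;6-\min\bigl(6,\,6,\,2+\mult_0 g,\,\mult_0 h\bigr)\;\ge\;0
\]
identically, because $x^2$ and $y^3$ already have weight $6$. So $E_w$ never witnesses failure of log canonicity and does not distinguish the boundary case $(\mult_0 g,\mult_0 h)=(4,6)$ from the non-lc range $(\ge 5,\ge 7)$; the sentence ``the log-discrepancy analysis leads precisely to $\mult_0 g\le 4$ or $\mult_0 h\le 6$'' is therefore not correct as written. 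More seriously, your proposed deformation fix for \emph{sufficiency} runs in the wrong direction: Theorem~\ref{deform-log} says MJ-log canonicity is \emph{open} in families, so knowing that a non-degenerate perturbation $f_\epsilon$ is log canonical says nothing about the special member $f_0$. (That contrapositive is exactly how Corollary~\ref{degenerate} is proved, but there one is establishing \emph{non}-lc.)

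Your other proposed fix---checking non-degeneracy directly---does work and is easier than you suggest: every compact face of $\Gamma(f)$ meeting $(2,0,0)$ has $\partial f_\Delta/\partial x=2x\neq 0$ on the torus, and for the remaining faces (contained in the $y,z$-plane) the specific shape $y^3+yg(z)+h(z)$ forces one of $\partial/\partial y$, $\partial/\partial z$ to be torus-nonvanishing on each edge. The paper handles this boundary case differently: a single ordinary blowup of the origin turns the strict transform into $u^2+v^3w+avw^3+bw^4+\cdots$ with quartic leading part having no triple linear factor, reducing at once to case (d). For necessity in the range $\mult_0 g\ge 5$, $\mult_0 h\ge 7$, both you and the paper use the Newton-polytope criterion $(1,1,1)\notin\Gamma(f)$ via Corollary~\ref{degenerate}.
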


\begin{proof} Let $(X,0)$ be an MJ-log canonical singularity defined by $f\in k[[x,y,z]]$. 
By (\ref{hmld}) in Proposition \ref{description}, we have
$$\hmld(0;X,\ox)=\inf_{n}\{(n+1)2-\dim(\psi^X_{n0})^{-1}(0)\}\geq 0,$$
therefore in particular for $n=3$, we have
$$\dim(\psi^X_{3,0})^{-1}(0)\leq 8.$$
Here, as $(\psi^X_{3,0})^{-1}(0)=\spec k[x^{(i)},y^{(j)},z^{(k)}\mid i,j,k=1,2,3]/ (F^{(1)},F^{(2)},F^{(3)})$, at least one of $F^{(j)}$ $(j=1,2,3)$ must be non zero
in $k[x^{(i)},y^{(j)},z^{(k)}]$.
By Remark \ref{keisan}, this implies that $\mult_0f\leq 3$.

{\bf Case I}: $\mult_0f=3$ 

Let $(X,0)\subset (A, 0)$ be the embedding into the 3-dimensional non singular
variety, and let $\Phi: A'\to A$ be the blow-up at 0.
Let $E$ be the exceptional divisor on $A'$, $X'$ the strict transform of $X$ in $A'$,
 $\Psi:\ola \to A'$ a factorizing resolution of $X'$ in $A'$ and $\olx$ the strict 
 transform of $X'$ in $\ola$.
We can take $\Psi$ such that the restriction $\psi=\Psi|_{Y}:Y\to X'$ is a log 
resolution of $\j_{X'}\j_X\ox_{'} $.   
As $X$ is a hypersurface of multiplicity 3
at 0, we have 
	$$I_X{\oa}_{'}=I_Y{\oa}_{'}(-3E).$$
Then, by Corollary \ref{blowup}, it follows
$$\hk_{\olx/X'}-J_{\olx/X'}-\psi^*(E|_{X'})=\hk_{\olx/X}-J_{\olx/X}.$$
Therefore, $(X,0)$ is MJ-log canonical if and only if $(X', E|_{X'})$ is MJ-log canonical around $E|_{X'}$.
Since $X'$ is a hypersurface, it is $S_2$, then by Corollary \ref{stin}, (ii), MJ-log canonicity of $(X', E|_{X'})$  is equivalent to that $E|_{X'}$ is reduced and MJ-log canonical.
As $\dim (E|_{X'})=1$ we can apply Proposition \ref{1dim}, (ii), and obtain that $E|_{X'}$
has ordinary nodes. Note that $E|_{X'}$ is a hypersurface in $\bP^2$ defined by
$\int(f)$.

{\bf Case II} $\mult_0f=2$

Let $\Phi:A'\to A$  be the blow-up at 0, $X'$ the strict transform of $X$ in $A'$ and $E$ the exceptional
divisor with respect to $\Phi$.
Then as the same discussion using Corollary \ref{blowup} as in (I), it follows that
$X$ has MJ-log canonical singularities if and only if $X'$ has MJ-log canonical singularities
along $E$.

Here we introduce an invariant for a hypersurface singularity.
%%%%
The smallest possible dimension $\tau(f)$
%=\tau(X, {\underline 0})$
of a linear subspace $V_0$ of $V=k x+ ky + k z$ such
that $\mbox{in} (f) $ lies in the subalgebra $k[V_0]$ of $k[x,y,z]$ is an invariant of the germ $(X, {0})$ (\cite[3.15]{ir}).
(In particular for $\mult_0f=2$, $\tau$ is just the rank of the 
quadratic forms defining the tangent cone, therefore it is clear that
$\tau$ is an invariant of $(X,x)$.)
%%%%

(II-1) $\tau(f)\geq 2$

In this case,  by Weierstrass preparation theorem
and a coordinate transformation (for example, see \cite{ir}) the equation $f=0$ is written as:
$$x^2+y^2+g(z)=0,$$
where $\mult_0 g\geq 2$ ( if $g=0$ we define $\mult_0 g=\infty$).
In this case $\hmld(0; X,\ox)=1$ by \cite{ir}, therefore $(X,0)$ is MJ-log canonical.

(II-2) $\tau(f)=1$

In this case the equation $f=0$ is written as:
$$x^2+g(y,z)=0,$$
where $\mult_0 g\geq 3$.
Now let us consider the germ of the hypersurface $g(y,z)=0$ at 0 in $\spec k[[y,z]]$.
Although this germ
depends on the choice of  the coordinates, 
its multiplicity
$m_2:={\mult} \ g$, and its $\tau$-invariant at 0, let it be $\tau_2$, only depends on $(X,0)$
(this
follows from \cite{hir}. See \cite[Remark 3.19]{ir} ).

(II-2-1) $\tau(f)=1, m_2\geq 5$

In this case $(X,x)$ is not MJ-log canonical.
Indeed we can see that ${\bf{1}}=(1,1,1) \not\in \Gamma(f)$, which implies that $(X, 0)$ is not MJ-log canonical by Corollary \ref{degenerate}.

(II-2-2) $\tau(f)=1, m_2=4$

In this case the equation $f$ is written as
$$x^2+g_4(y,z)+g_5(y,z)=0,$$
where $g_4$ is homogeneous of degree 4 and $\mult_0 g_5\geq 5$.
Then, we can see that the singular locus $C$ of $X'$ lying on $E$ is isomorphic to $\bP^1$.
Let $\Phi':A''\to A'$ is the blow-up with the center $C$, $X''$ the strict transform of $X$ in $A''$ and
$F$ the exceptional divisor with respect to $\Phi'$.
Then, as $I_{X'}\o_{A''}=I_{X''}\o_{A''}(-2F) $ and $K_{A''/A'}=F$, by Theorem \ref{comparison} we obtain
$$\hk_{\olx/{X''}}-J_{\olx/{X''}}-{\Psi'}^*(F|_{X''})=\hk_{\olx/{X'}}-J_{\olx/{X'}},$$
where $\Psi': \ola\to A''$ is a factorizing resolution of $X''$ in $A''$ and $\olx$ is the
strict transform of $X''$ in $\ola$.
The above equality yields the $X'$ has MJ-log canonical singularities if and only if $(X'', F|_{X''})$ is MJ-log
canonical.
Here, as $X''$ is a hypersurface, so in particular satisfies $S_2$ condition, by Corollary \ref{stin} the curve $F|_{X''}$ is reduced and MJ-log canonical.
We can see that $F|_{X''}$ has at worst ordinary nodes if and only if $g_4$ does not have 
 a linear factor with multiplicity more than 2.  
 
 (II-2-3) $\tau(f)=1, m_2=3$
 
 (II-2-3-a) $\tau(f)=1, m_2=3, \tau_2>1$
 
 In this case it is proved that $\hmld(0; X,\ox)=1$ in \cite[Proposition 3.21]{ir}.
 Therefore $(X,0)$ is MJ-log canonical.
 
 (II-2-3-b)  $\tau(f)=1, m_2=3, \tau_2=1$
 
 In this case the equation $f$ is written as
 
 $$f=x^2+y^3+yg(z)+h(z),$$
 where $\mult_0g\geq 3$ and $\mult_0h\geq 4$.
 
 If $\mult_0g=3$ or $\mult_0h\leq 5$, then $\hmld(0; X,\ox)=1$ by \cite[Proposition 3.23]{ir}.
 Therefore $(X,0) $ is MJ-log canonical.
 
 If $\mult_0g=4$ or  $\mult_0h= 6$, by a coordinate transformation we may assume 
 $g(z)=az^4$ and $h(z)=bz^6+\mbox{(higher\ degree\ term\ in\  }z$) ($a,b \in k$).
 Here, note that the condition ``$\mult_0g=4$ or  $\mult_0h= 6$" implies ``$a\neq 0$ or $b\neq 0$".
 Take a blow-up $\Phi:A'\to A$ and look at the equation defining $X'$ on each canonical affine chart of $A'$,
 we can see that on two affine charts $X'$ is non singular and on one affine chart $X'$ is 
 defined by 
 $$u^2+v^3w+avw^3+bw^4+ h'(w) =0,$$
 where $\mult_0h'\geq 5$.
Here, as $a\neq 0$ or $b\neq 0$, the degree 4 part $v^3w+avw^3+bw^4$ does not have 
a linear factor with multiplicity 3.
Therefore, by (II-2-2) the singularity is MJ-log canonical at the point with the coordinate $(u,v,w)=(0,0,0)$ 
 and the other points are non singular.
Thus, in this case $(X,0)$ is MJ-log canonical.

If $\mult_0g\geq 5$ and $\mult_0h\geq 7$, then the Newton polygon $\Gamma(f)$
does not contain the point ${\bf 1}=(1,1,1)$.
Therefore by Corollary \ref{degenerate} the singularity $(X,0)$ is not MJ-log canonical.
%%%%%%%
\end{proof}

Next we consider the case $\emb(X,0)=4$.

\begin{lem}\label{emb4}  Assume that $X$ is 2-dimensional MJ-log canonical at a point $0\in X$ with 
$\emb(X,0)=4$. 
Then the following hold:
\begin{enumerate}
\item[(i)]
When we write $\widehat{\ox_{,0}}\simeq k[[x_1,x_2,x_3,x_4]]/I$,
the ideal $I$ contains two elements $f,g$ with $\mult_0f=\mult_0g=2$ and
$\int(f), \int(g)$ form a regular sequence in $k[x_1,x_2,x_3,x_4]$.
\item[(ii)] The projective scheme $E_X:=V(\int(I))\subset \bP^3$ is a reduced
curve with at worst ordinary nodes.
\end{enumerate}
\end{lem}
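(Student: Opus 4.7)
The plan for (i) is to apply the jet-scheme formula (\ref{hmld}) of Proposition \ref{description} with $W=\{0\}$ and $\a=\ox$ at $n=2$. Writing $\widehat{\ox_{,0}}\simeq k[[x_1,\ldots,x_4]]/I$, the assumption $\emb(X,0)=4$ forces every generator $F$ of $I$ to have $\mult_0 F\geq 2$, so by Remark \ref{keisan} the only non-trivial defining equations of $(\psi^X_{2,0})^{-1}(0)$ are the initial quadratic forms $\int(F)(\mathbf{x}^{(1)})$ coming from generators with $\mult_0 F=2$. Hence
\[
(\psi^X_{2,0})^{-1}(0)=V_2\times\mathbb{A}^4_{\mathbf{x}^{(2)}},
\]
where $V_2\subset\mathbb{A}^4_{\mathbf{x}^{(1)}}$ is defined by those quadrics. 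MJ-log canonicity at $0$ forces $\dim(\psi^X_{2,0})^{-1}(0)\leq 6$, so $V_2$ has codimension at least $2$ in $\mathbb{A}^4$, producing two elements $f,g\in I$ of multiplicity $2$ whose initial forms form a regular sequence in $k[x_1,\ldots,x_4]$.

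For the dimension part of (ii), $E_X$ sits inside the complete intersection curve $V(\int(f),\int(g))\subset\bP^3$ (of degree $4$), while its affine cone $V(\int(I))\subset\mathbb{A}^4$ is the tangent cone of $X$ at $0$ and so has dimension $\dim X=2$. Hence $\dim E_X=1$.

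For reducedness and the nodal condition, the plan is to pass to the blow-up $\Phi:A'\to A=\mathbb{A}^4$ at $0$ with exceptional divisor $E\cong\bP^3$ and strict transform $X'\subset A'$, so that $X'\cap E=E_X$ scheme-theoretically. Since every generator of $I$ has multiplicity at least $2$, one has $I\ox_{A'}\subseteq I_{X'}\ox_{A'}(-2E)$, and Corollary \ref{blowup} applies with $b=2$, $c=2$, $d=2$, giving $bc-c-d+1=1$ and the pointwise inequality
\[
\hk_{\olx/X}-J_{\olx/X}\ \leq\ \hk_{\olx/X'}-J_{\olx/X'}-\Psi^*(E|_{X'})
\]
on a factorising resolution $\Psi:\ola\to A'$. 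Taking $\ord_F$ at any exceptional prime divisor $F$ translates this into $\ha(F;X,\ox)\leq\ha(F;X',I_{E_X}^1)$, so MJ-log canonicity of $X$ at $0$ upgrades to MJ-log canonicity of the pair $(X',I_{E_X}^1)$. Applying Corollary \ref{stin}(ii) to the Cartier divisor $E_X\subset X'$ then automatically forces $E_X$ to be reduced and yields $\hmld(W;E_X,\ox_{E_X})=\hmld(W;X',I_{E_X}^1)\geq 0$ for every strictly proper closed $W\subset E_X$. Thus $E_X$ is MJ-log canonical as a $1$-dimensional scheme, and Proposition \ref{1dim} identifies it as non-singular or nodal at every closed point.

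The hard part is verifying the $S_2$ hypothesis that Corollary \ref{stin}(ii) requires on $X'$. It holds automatically in the setting underlying (iii) of Theorem \ref{hyp3}, where $X$ is locally a complete intersection and $X'$ is Cohen--Macaulay. In the general situation underlying (iv), where $X$ need not even be $S_2$, one must instead place $X$ inside the $2$-dimensional complete intersection $M=V(f,g)$ produced by (i), compare $E_X\subseteq E_M$, and propagate reducedness and the nodal property from $E_M$ to $E_X$; executing this embedding and comparison carefully is the main technical step beyond the jet-scheme count and the application of Corollary \ref{blowup}.
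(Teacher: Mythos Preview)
Your argument for (i) is essentially identical to the paper's: the jet-scheme bound at $n=2$ forces two independent quadrics in $\int(I)$.

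For (ii), your blow-up setup and the use of Corollary~\ref{blowup} with $b=c=d=2$ to conclude that $(X',E_X)$ is MJ-log canonical also matches the paper. The divergence is precisely at the $S_2$ step, and here your proposed route has a real problem.

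You suggest handling the non--complete-intersection case by embedding $X$ into $M=V(f,g)$ and ``propagating reducedness and the nodal property from $E_M$ to $E_X$''. But to know anything about $E_M$ you would need $M$ to be MJ-log canonical at $0$, and the adjunction inequality (\cite[Corollary~3.12]{Ishii}, used later in the proof of Theorem~\ref{ci}) goes the wrong way: it gives $\hmld(0;X,\ox)\geq\hmld(0;M,\o_M)$, so MJ-log canonicity of $X$ does \emph{not} imply MJ-log canonicity of $M$. Thus there is no direct way to conclude that $E_M$ is reduced with nodes, and the comparison you outline does not get off the ground.

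The paper instead verifies $S_2$ for $X'$ \emph{intrinsically}, without ever passing to $M$. From $\hmld(y;X',E_X)\geq 0$ one immediately gets $\hmld(y;X',\o_{X'})\geq 1$ for every $y\in E_X$. The key input you are missing is then the classification of $2$-dimensional points with $\hmld\geq 1$ from \cite[Lemma~3.6]{ir}: any such $(X',y)$ is either nonsingular or a hypersurface singularity. Either way $X'$ is locally a complete intersection, hence Cohen--Macaulay, hence $S_2$ along $E_X$. With this in hand, Corollary~\ref{stin}(ii) applies directly to $X'$ itself, and the rest of your argument (reducedness of $E_X$, then Proposition~\ref{1dim}) goes through exactly as you wrote.
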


\begin{proof}
By (\ref{hmld}) in Proposition \ref{description}, we have
$$\hmld(0;X,\ox)=\inf_{n}\{(n+1)2-\dim(\psi^X_{n0})^{-1}(0)\}\geq 0,$$
therefore in particular for $n=2$, we have
\begin{equation}\label{regularsequence}
\dim(\psi^X_{2,0})^{-1}(0)\leq 6.
\end{equation}
Here, note that 
$$(\psi^X_{2,0})^{-1}(0)=\spec k[x_1^{(i)},x_2^{(j)},x_3^{(k)}, x_4^{(l)}\mid i,j,k,l=1,2]/ (F^{(1)},F^{(2)}\mid f\in I)$$ under the notation in Remark \ref{keisan}.
Since 4 is the embedding dimension of $(X,0)$, it follows that $\mult_0 f\geq 2$ for
all $f\in I$, therefore $F^{(1)}=0$ for all $f$ 
by Remark \ref{keisan}.
By the inequality (\ref{regularsequence}) we obtain that there exist $f, g\in I$ such
that $F^{(2)}(x_i^{(1)}), G^{(2)}(x_i^{(1)})$ form a
regular sequence in $k[x_1^{(i)},x_2^{(j)},x_3^{(k)}, x_4^{(l)}\mid i,j,k,l=1,2]$, therefore 
these form a regular sequence in $k[x_1^{(1)},x_2^{(1)},x_3^{(1)},x_4^{(1)}]$.
As $\int(f)(x_i^{(1)})=F^{(2)}, \int(g)(x_i^{(1)})=G^{(2)}$,  
it follows that $\mult_0 f=\mult_0 g=2$ by Remark \ref{keisan} and that  $\int(f), \int(g)$ form a regular sequence in $k[x_1,x_2,x_3,x_4]$. 
This completes the proof of (i).

Now let $A$ be a non singular variety of dimension 4 containing a neighborhood
of the singularity $(X,0)$ and let $A'\to A$ be the blow-up at 0 with the exceptional
divisor $E\simeq \bP^3$. Let $X'\subset A'$ be the strict 
transform of $X$ in $A'$.
Then, note that $E|_{X'}=E_X$ and we have  
$$I_X{\oa}_{'}\subset I_{X'}{\oa}_{'}(-2E).$$
By taking a factorizing resolution $\Psi:\ola\to A'$ of $X'$ in $A'$ with the strict 
transform $\olx$ of $X'$, we obtain
\begin{equation}\label{comp}
\hk_{\olx/{X'}}-J_{\olx/{X'}}-\Psi^*E|_\olx \geq \hk_{\olx/{X}}-J_{\olx/{X}}
\end{equation}
by Corollary \ref{blowup}.
Now, by the assumption that $X$ is MJ-log canonical at 0, it follows that 
$(X', E_X)$ is MJ-log canonical, which implies 
$\hmld(y;X', E_X)\geq 0$
for every $y \in E_X$.
Therefore we obtain 
$$\hmld(y;X',\ox_{'})\geq 1.$$
But such a 2-dimensional singularity $(X',y)$ is determined as either non singular or a
hypersurface singularity (see, for example \cite[Lemma 3.6]{ir}).
Hence $X'$ satisfies $S_2$ condition around $E_X$.
Then, by Corollary \ref{stin}, $E_X$ is reduced and MJ-log canonical, which yields
the statement (ii).
\end{proof}

\begin{thm}\label{ci}  Let $(X,0)$ be a singularity on a 2-dimensional reduced scheme with  $\emb(X, 0)=4$.  
Then, the following hold:
\begin{enumerate}
\item[(i)] In case 
$(X, 0)$ is locally a complete intersection:

\noindent
$X$ is MJ-log canonical at 0  if and only if \\
$\widehat{\ox_{,0}}\simeq k[[x_1,x_2,x_3,x_4]]/(f,g)$, where $f,g$ satisfy the 
conditions that  
 $\mult_0f=\mult_0 g=2$  and $V(\int(f),\int(g))\subset \bP^3$ 
is a reduced curve with at worst ordinary nodes.
\item[(ii)] In case
$(X,0)$ is not locally a complete intersection:

\noindent
$X$ is  MJ-log canonical at 0  if and only if $X$ is a closed subscheme of
 a locally  complete intersection scheme $M$ which is MJ-log canonical at 0.

\end{enumerate}

\end{thm}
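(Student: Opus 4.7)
The proof splits into the four implications of the two biconditionals.

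\emph{(i) ``if'' direction.} The plan is to blow up $A = \bA^4$ at the origin, via $\Phi\colon A'\to A$ with exceptional divisor $E\cong \bP^3$, and denote by $X'$ the strict transform of $X$. Since $f$ and $g$ both have multiplicity exactly $2$ and their initial forms form a regular sequence in $k[x_1,\ldots,x_4]$, the blow-up is factorizing and $I_X\o_{A'}=I_{X'}\o_{A'}(-2E)$. Applying Corollary \ref{blowup} with $a=b=2$ and $c=d=2$ (so that $ac-c-d+1=1$) yields
$$\hk_{\olx/X}-J_{\olx/X}=\hk_{\olx/X'}-J_{\olx/X'}-\Psi^*E|_{\olx}$$
for a suitable further log-resolution $\Psi\colon \olx\to X'$, reducing MJ-log canonicity of $(X,0)$ to MJ-log canonicity of the pair $(X',E|_{X'})$ along $E|_{X'}$. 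Because $X'$ is a complete intersection and hence $S_2$, I would then invoke Corollary \ref{stin}(ii) to reduce further to reducedness and MJ-log canonicity of the curve $E|_{X'}=V(\int(f),\int(g))\subset \bP^3$; by Proposition \ref{1dim}(ii) this is exactly the stated nodal hypothesis.

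\emph{(i) ``only if'' direction.} The plan is to invoke Lemma \ref{emb4}, which supplies $f,g\in I_X$ with $\mult_0 f=\mult_0 g=2$, $\int(f),\int(g)$ a regular sequence, and $V(\int(I_X))\subset \bP^3$ a reduced curve with at worst ordinary nodes. Since $X$ is a complete intersection of codimension $2$ with $\emb(X,0)=4$, the ideal $I_X$ lies in $\frak m^2$ and has a minimal system of exactly two generators; the linear independence of $\int(f),\int(g)$ together with Nakayama's lemma then forces $(f,g)=I_X$. Because $\int(f),\int(g)$ form a regular sequence they constitute a standard basis, so $\int(f,g)=(\int(f),\int(g))$, and $V(\int(f),\int(g))=V(\int(I_X))$ is the required reduced nodal curve.

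\emph{(ii) ``if'' direction.} Here the plan is to apply inversion of adjunction (Proposition \ref{inversion}) to both $X$ and $M$, which are codimension-$2$ subschemes of $A$, obtaining
$$\hmld(0;X,\ox)=\mld(0;A,I_X^2)\qquad\text{and}\qquad \hmld(0;M,\o_M)=\mld(0;A,I_M^2).$$
The inclusion $I_M\subset I_X$ gives $\val_E(I_X)\leq \val_E(I_M)$ for every divisor $E$ over $A$, hence $\mld(0;A,I_X^2)\geq \mld(0;A,I_M^2)\geq 0$, and Proposition \ref{atx}(i) then shows that $X$ is MJ-log canonical at $0$.

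\emph{(ii) ``only if'' direction, and the main obstacle.} Starting from $f,g\in I_X$ produced by Lemma \ref{emb4}, the plan is to set $M=V(f,g)$, which is a codimension-$2$ complete intersection with $X\subset M$, and then argue that $f,g$ can be chosen so that $M$ itself is MJ-log canonical. By part (i), this amounts to arranging that $V(\int(f),\int(g))\subset \bP^3$, a complete intersection of two quadrics of degree $4$ containing the reduced nodal curve $V(\int(I_X))$ as a closed subscheme, is reduced with at worst ordinary nodes. I would attempt this via a Bertini-type argument on the linear system $W=(I_X\cap\frak m^2)/\frak m^3$ of quadrics through $V(\int(I_X))$: for a generic pair $\int(f),\int(g)\in W$ forming a regular sequence, the residual components of $V(\int(f),\int(g))$ outside $V(\int(I_X))$ should be smooth and meet $V(\int(I_X))$ only transversally. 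The main obstacle is making this rigorous: the naive monotonicity $\mld(0;A,(f,g)^2)\leq \mld(0;A,I_X^2)$ goes the wrong way, so one cannot simply deduce MJ-log canonicity of $M$ from that of $X$, and one must instead verify directly that some specific choice of $(f,g)$ produces a reduced nodal $V(\int(f),\int(g))$. This requires a careful case analysis on the possible decomposition of $V(\int(I_X))$ in $\bP^3$, controlling both the smoothness of the residual curve and the transversality of its intersection with $V(\int(I_X))$.
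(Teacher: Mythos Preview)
Your treatment of the first three implications is essentially correct and close to the paper's. For (i) ``if'' and (ii) ``if'' your arguments match the paper's (the paper quotes an adjunction inequality from \cite{Ishii} for (ii) ``if'', but your direct use of inversion of adjunction amounts to the same thing). For (i) ``only if'' you take a slightly different route: you use Nakayama to show that the $f,g$ from Lemma~\ref{emb4} already generate $I_X$, and then the regular-sequence property of $\operatorname{in}(f),\operatorname{in}(g)$ to conclude $\operatorname{in}(I_X)=(\operatorname{in}(f),\operatorname{in}(g))$. The paper instead argues via multiplicity: $\mult(X,0)\geq(\mult_0 f)(\mult_0 g)$ combined with $\deg V(\operatorname{in}(I))\leq\deg V(\operatorname{in}(f),\operatorname{in}(g))$ forces $V(\operatorname{in}(I))=V(\operatorname{in}(f),\operatorname{in}(g))$. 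Both are valid; yours is a bit cleaner.

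The genuine gap is in (ii) ``only if''. You correctly diagnose that monotonicity goes the wrong way and that one must \emph{construct} a suitable $(f',g')$, and you correctly anticipate that this comes down to a case analysis on $E_X=V(\operatorname{in}(I_X))$. But two key ingredients are missing from your plan. First, you do not establish the bound $\deg E_X\leq 3$: since $E_X\subset E'=V(\operatorname{in}(f),\operatorname{in}(g))$ with $\deg E'=4$, and $E_X$ is not a complete intersection (else $X$ would be), one gets $\deg E_X\leq 3$. This bound is what makes the case analysis finite and tractable. Second, the Bertini-type argument you sketch is not how the paper proceeds, and it is unclear it can be made to work: the linear system of quadrics through $E_X$ has $E_X$ in its base locus, and controlling singularities of a generic complete intersection \emph{along the base locus} is exactly where Bertini fails. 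The paper instead enumerates all reduced non-complete-intersection curves in $\bP^3$ of degree $\leq 3$ with at worst nodes (skew lines, twisted cubic, conic plus non-coplanar line, chain of three lines), and for each one writes down by hand a complete intersection of two quadrics $E''\supset E_X$ that is reduced with only nodes. Your plan stops precisely where the real work begins.
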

\begin{proof} For the proof of (i), assume that $(X,0)$ is locally a complete intersection
and $\widehat{\ox_{,0}}\simeq k[[x_1,x_2,x_3,x_4]]/(f,g)$.
Assume that $(X,0)$ is MJ-log canonical.
%%%%%%
Then, by Lemma \ref{emb4}  it follows $\mult_0 f=\mult_0 g=2$.
Because in Lemma \ref{emb4} it is proved that $E_X=V(\int(I))$ is a reduced curve with
at worst ordinary nodes, it is sufficient to prove that $V(\int(f),\int(g))=V(\int(I))$.
In general for a complete intersection singularity defined by $f,g$ 
the inequality  $$\mult(X,0)\geq (\mult_0f)( \mult_0g) $$ holds.
Here, note that  $\mult(X,0)=\deg
(V(\int(I))\subset \bP^3)$.
Noting that $V(\int(I))\subset V(\int(f), \int(g))$, we have 
$\deg V(\int(I))\leq \deg V(\int(f),\int(g))$, which implies 
$$\mult(X,0)\leq(\mult_0f_1)(\mult_0f_2).$$ 
Therefore the equalities  hold, in particular  $V(\int(I))= V(\int(f), \int(g))$.

Conversely, if $\widehat{\ox_{,0}}\simeq k[[x_1,x_2,x_3,x_4]]/(f,g)$ and
$f,g$ satisfy the conditions in (i).
The conditions claim that $E_X$ is a MJ-log canonical curve.
By Corollary \ref{stin}, we have $(X', E_X)$ is MJ-log canonical  around $E_X$.
On the other hand, in this case we have
$$I_X{\oa}_{'}= I_{X'}{\oa}_{'}(-2E).$$
Therefore by Corollary \ref{blowup}, we obtain the equality in (\ref{comp})
$$\hk_{\olx/{X'}}-J_{\olx/{X'}}-\Psi^*E|_\olx =\hk_{\olx/{X}}-J_{\olx/{X}},$$
which yields that $X$ is MJ-log canonical at 0.

\vskip.3cm
 
For the proof of (ii), first assume that  $X$ is a subscheme
of an MJ-log canonical 2-dimensional locally complete intersection scheme $M$.
By Adjunction formula in \cite[Corollary 3.12]{Ishii} we have
$$\hmld(0; X,\ox)\geq \hmld(0;M,{\mathcal O}_M).$$
As the right hand side is non negative by the assumption, 
 we obtain that $X$ is MJ-log canonical at 0.

Conversely assume that $X$ is MJ-log canonical at 0. 
Assume also that $X$ is not locally a complete intersection at 0.
Then, by Lemma \ref{emb4}, there are two elements $f,g\in I$ such that 
$\mult_0f=\mult_0g=2$ and $\int(f), \int(g)$ define a curve in $\bP^3$.
Here $I$ is the ideal as in the proof of Lemma \ref{emb4}.
Let $E'=V(\int(f), \int(g))\subset \bP^3$.
Let $\ola\stackrel{\Psi}\longrightarrow A' \to A$, $\olx\to X'\to X$, $E\subset A'$ and $E_X\subset X'$  as in the proof of Lemma \ref{emb4}. 
Then, as $\int(f), \int(g)\in \int(I)$, we have $E_X\subset E'$.
Therefore $\deg E_X\leq \deg E'=4$ in $\bP^3$.
By the assumption that $X$ is not locally a complete intersection at 0, it follows that 
$E_X$ is not 
a complete intersection, therefore 
\begin{equation}\label{deg}
\deg E_X\leq 3.
\end{equation}
On the other hand $E_X$ is reduced and has at worst ordinary nodes
by Lemma \ref{emb4}.
By the result of (i), for the proof of the statement,  it is sufficient to prove that there are two elements $f',g'\in I$ 
such that $V(\int(f'), \int(g'))$ is a reduced curve with at worst ordinary nodes.
Therefore it is sufficient to prove that there exists in $\bP^3$  a complete intersection 
reduced curve $E''$  which contains $E_X$ such that $E''$ has at worst ordinary nodes.
Here, we note that $E_X$ is not a complete intersection, because if it is a complete
intersection, then $X$ is also a complete intersection.

An irreducible curve in $\bP^3$ of degree $\leq 3$ is classified as follows:
\begin{enumerate}
\item[(a)] $\deg C=1 \Leftrightarrow C$ is a line.
\item[(b)] $\deg C=2 \Leftrightarrow C$ is a conic in $\bP^2$.
\item[(c)] $\deg C=3 \Leftrightarrow C$ is either a plane cubic with genus 1 or a twisted
cubic.
\end{enumerate}

Case 1:  The case $\deg E_X=1$ does not happen. 
Because,
if $\deg E_X=1$, then $E_X$ must be irreducible and by (a) it is a line,
therefore $E_X$ is a complete intersection, a contradiction.

Case 2: The case $\deg E_X=2$. 
In this case, the possibility of $E_X$ is as follows:

(1) a plane conic, (2)  the union of two lines which intersect at one point,
(3) the disjoint union of two lines.

\noindent The cases (1), (2) do not happen as $E_X$, because in these cases the curve becomes
a complete intersection.
In case (3), $E_X$ is the union of skew lines, therefore by a suitable coordinate system in $\bP^3$,
we can write $E_X=V(x_1, x_2)\cup V(x_3, x_4)$. Then $E_X$ is contained in a 
complete intersection scheme $V(x_1x_3, x_2x_4)$.
We can see that this scheme is a cycle of four $\bP^1$'s with ordinary  nodes.
We can take this scheme $V(x_1x_3, x_2x_4)$ as $E''$.

Case 3: The case $\deg E_X=3$.
In this case, the possibility of $E_X$ is as follows:

(4)  a plane cubic of genus 1, (5)  a twisted cubic, 
(6) the union of a plane conic and a line,
(7) the union of three lines.

The case (4) does not happen as $E_X$, because in this case the curve is a complete intersection.
If $E_X$ is as in (5), then $E_X$ is defined by $x_1x_3-x_2^2=x_2x_4-x_3^2=x_1x_4-x_2x_3=0$.
Then the complete intersection curve $V(x_1x_3-x_2^2+x_2x_4-x_3^2, x_1x_4-x_2x_3)$ contains $E_X$ and it is reduced and has only ordinary nodes.
So take this scheme as $E''$.

In case (6), 
first we show that the conic $Q$ and the line $l$ intersect.
Let $S$ be a surface defined by a general element in the vector space $\{a(\int(f))+b(\int(g))\mid a,b \in k\}$.
Then $S$ must be an irreducible surface, because otherwise $S$ must be the union of 
two hyperplanes and $E'$ becomes a line, a contradiction.
Therefore $S$ is a cone over a plane conic or non singular.
If $S$ is a cone, then a plane conic on $S$ and a line on $S$ intersect.
If $S$ is non singular, then $S\simeq \bP^1\times\bP^1$ and the lines on $S$ are 
 either of the type $C_p=\{p\}\times \bP^1$ or of the type $D_q=\bP^1\times\{q\}$,
 where $p, q$ are points in $\bP^1$.
 A conic on $S$ is linearly equivalent to $C_p+D_q$ which has a positive intersection 
 number with $C_p$ and $D_q$.
 Now we obtained $Q\cap l\neq \emptyset$.

Here, if the conic and the line lie on a plane, then the curve becomes a complete
intersection. Therefore $E_X$ is not of this type.
Assume that the conic $Q$ and the line $l$ do not lie on a plane.
We can take $Q$ on a hyperplane $x_1=0$. 
By a suitable choice of the coordinate system, we may assume that $l=V(x_2, x_3)$.
Let $g=g(x_2,x_3,x_4)$ be the defining equation of $Q$ in the hyperplane and
$\ell=ax_2+bx_3$ a general linear combination of $x_2$ and $x_3$.
Then the complete intersection scheme $V(g, x_1\ell)$ contains $Q\cup l$ and it is a reduced curve consisting of
a plane conic and two lines $l, l'$ intersecting normally at the point $(1,0,0,0)$ with
ordinary double 
intersection also at $Q\cap l'$.
Therefore if $E_X=Q\cup l$, we can take $V(g, x_1\ell)$ as $E''$.

In case (7), take $S$ as above. 
If $S$ is a cone over a plane conic and if $E_X$ consists of three lines, then
 by $E_X\subset S$ three lines must  intersect at the vertex, therefore it is not 
 ordinary double, which shows that $E_X$ is not of this type.
 If $S$ is non singular, then, as was stated above, a line on $S$ is either of the form
  $C_p$ or $D_q$.
Because of the symmetry of $C$ and $D$, we may assume that the union of three lines on $S$ is either the union of three $C_p$'s or the union of 
 two $C_p$'s and one $D_q$.
 The union of three $C_p$'s is not possible for $E_X$. 
 Because otherwise, $E_X\subset E'$ and $E'=S\cap H$, where $H$ is a hypersurface 
 of degree $2$.
 Then $$3=(E_X\cdot D_q)_S\leq (E'\cdot D_q)_S= H\cdot D_q=2,$$  which is a contradiction.
 Here, $(\ \ \cdot\ \ )_S$ is the intersection number of the divisors on $S$ and 
 $H\cdot D_q$ is the intersection number of the divisor $H$ and a curve $D_q$
 in $\bP^3$.
 
 Now if $E_X$ is the union of  $C_{p_1}, C_{p_2}$ and  $D_q$, then it is a chain of lines and
 by a suitable choice of the coordinate system, these are 
 represented as  $C_{p_1}=V(x_1, x_2), C_{p_2}=V(x_3, x_4)$ and  $D_q=V(x_2,x_3)$.
 Then the complete intersection $V(x_1x_3, x_2x_4)$ contains $E_X$ and $V(x_1x_3, x_2x_4)$ is reduced and has at worst ordinary nodes.
 Thus every possible $E_X$ is contained in a complete intersection curve which is
 reduced and has at worst ordinary nodes.
\end{proof}

\begin{exmp}\label{nonS2} Let $X\subset \bA^4$ be defined by $f=x_1x_3, g=x_2x_4\in k[x_1,x_2,x_3,x_4]$.
Then $\int(f)=f$, $\int(g)=g$  and $V(f,g)$ is a cycle  consisting of four $\bP^1$'s 
such that the intersection of each two components is ordinary double.
Then, by Theorem \ref{ci}, $X$ is MJ-log canonical at 0.
Let $C_i$ $(i=1,2,\ldots, 4)$ be the irreducible component of $V(f,g)$ such that
$C_i\cdot C_{i+1}=1$ for $i=1,\ldots, 4$ and let $C_5:=C_1$.
Note that $X$ is the cone over the reduced projective scheme $\bigcup _{i=1}^4C_i
\subset \bP^3$.

Now take the cone $X_1$ over the reduced projective scheme $C_1\cup C_2\cup C_3
\subset \bP^3$. By Theorem \ref{ci}, $X_1$ is MJ-log canonical at 0.
This  example was proved to be  non semi log canonical singularity  by Koll\'ar \cite[Example 5.16]{ko}.

Next take the cone $X_2$ over the reduced projective scheme $C_1\cup C_3\subset
\bP^3$. By Theorem \ref{ci}, $(X_2,0)$ is also MJ-log canonial.
This is an example of MJ-log canonical singularity but not $S_2$.
Indeed $X_2$ is the union of two irreducible surfaces which intersect at a point 0,
therefore 
$X_2$ does not satisfy $S_2$.

\end{exmp}

\makeatletter \renewcommand{\@biblabel}[1]{\hfill#1.}\makeatother

\end{document}